\theoremstyle{plain}
\newtheorem{theorem}{Theorem}[section]
\newtheorem{proposition}[theorem]{Proposition}
\newtheorem{corollary}[theorem]{Corollary}
\newtheorem{lemma}[theorem]{Lemma}
\newtheorem{conj}{Conjecture}
\newtheorem*{conj*}{Conjecture}
\theoremstyle{definition}
\newtheorem{defn}[theorem]{Definition} 
\newtheorem*{defn*}{Definition} 
\theoremstyle{remark}
\newtheorem{remark}[theorem]{Remark}
\newtheorem{exs}[theorem]{Examples}
\newcommand{\eg}{\emph{e.g. }}
\newcommand{\ie}{\emph{i.e. }}
\newcommand{\rank}{\operatorname{rank}}
\newcommand{\vol}{\operatorname{vol}}
\newcommand{\covol}{\operatorname{covol}}
\newcommand{\ord}{\operatorname{ord}}
\newcommand{\N}{\operatorname{N}}
\newcommand{\rk}{\operatorname{rk}}
\newcommand{\Spec}{\operatorname{Spec}}
\newcommand{\Hom}{\operatorname{Hom}}
\newcommand{\GL}{\operatorname{GL}}
\newcommand{\Aut}{\operatorname{Aut}}
\newcommand{\boldell}{\pmb{\ell}}
\newcommand{\Q}{\mathbf{Q}}
\newcommand{\R}{\mathbf{R}}
\title{On slopes of isodual lattices}
\author{Renaud Coulangeon}
\begin{document}

\maketitle
\begin{abstract}
The slope filtration of Euclidean lattices was introduced in works by Stuhler in the late 1970s, extended by Grayson a few years later, as a new tool for reduction theory and its applications to the study of arithmetic groups. Lattices with trivial filtration are called semistable, in keeping with a classical terminology. In 1997, Bost conjectured that the tensor product of semistable lattices should be semistable itself. Our aim in this work is to study these questions for the restricted class of \emph{isodual lattices}. Such lattices appear in a wide range of contexts, and it is rather natural to study their slope filtration. We exhibit specific properties in this case, which allow, in turn, to prove some new particular cases of Bost's conjecture.
\end{abstract}
\section{Introduction}

The notions of stability and slope appear in a wide range of mathematical contexts, often by analogy with the original geometric setting in which they were developed, namely the study of moduli spaces of vector bundles over curves (see \eg \cite{MR0175899,MR0184252}). 
In these various theories, one can define a \emph{canonical filtration} of an object by semistable ones, a property brought to light by Harder and Narasinham in the case of vector bundles on curves \cite{MR0364254}. A \emph{canonical polygon} is associated with this filtration, together with the sequence of \emph{slopes} of its boundary.

This formalism applies in particular to Euclidean lattices, as observed by Stuhler \cite{MR0424707,MR0447126}. The relevant notions are the height and reduced height, which in the case of an ordinary Euclidean lattice $L$ are defined as
\[ H(L)=\covol(L)=\vol(\R L/L) \ \text{ and } \ H_r(L)=H(L)^{1/\dim L}. \]
Alternatively, in keeping with the classical terminology for vector bundles over curves, one can define the \emph{degree} and \emph{slope} of $L$ as

\[ \deg L =-\log(H(L)) \] and \[ \mu(L)=-\log(H_r(L))=\dfrac{\deg L}{\rank L} . \]
These quantities are also defined for sublattices and quotients. The slopes (resp. reduced heights) of the successive quotients in the canonical filtration make up a strictly decreasing (resp. increasing) sequence of real numbers. The first term of this sequence is thus called the maximal slope $\mu_{max}$ (resp. the minimal reduced height $H_{min}$) of $L$. 

Grayson \cite{MR780079,MR870711} studied this formalism in the more general context of \emph{$\mathcal{O}_K$-lattices}, $\mathcal{O}_K$ being the ring of integers of a number field $K$. An Arakelov version of these questions was introduced by Bost in the 1990s, in terms of \emph{Hermitian vector bundles over $\Spec \mathcal{O}_K$}. More recently, Gaudron and Rémond developed a more arithmetic approach in \cite{MR3641657}, valid over any algebraic extension of $\mathbf{Q}$, in terms of \emph{rigid adelic spaces} and heights thereof. The three points of view -- $\mathcal{O}_K$-lattices, Hermitian vector bundles, rigid adelic spaces -- are equivalent when $K$ is a number field, and the above definition of (reduced) height carry over in a natural way, see section \ref{S21}. The recent text \cite{gaudron2018}, from which we borrow the approach and terminology, gives a very comprehensive account of this theory.

The slope filtration exhibits remarkable properties with respect to most of the usual algebraic operations : sum, quotient, duality. The case of tensor product is much more elusive. Formal properties of the height function, and strong analogies with similar notions in various contexts (see \cite{MR2571693,MR2872959,MR3035951})  suggested to Bost the following conjecture  :
\begin{conj}[Bost \cite{bostconj}]\label{bc}
	The minimal height of the tensor product of two rigid analytic spaces $E$ and $F$ over a number field $K$
is equal to the product of their respective minimal heights :
	\begin{equation*}
	H_{min} (E \otimes F)=	H_{min}(E)	H_{min}(F).
	\end{equation*}
\end{conj}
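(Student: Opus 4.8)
The plan is to pass to the additive formulation. Writing $\mu_{max}(E)=-\log H_{min}(E)$, Conjecture \ref{bc} is equivalent to the identity
\[ \mu_{max}(E\otimes F)=\mu_{max}(E)+\mu_{max}(F). \]
One inequality is purely formal: choosing saturated subspaces $E'\subseteq E$ and $F'\subseteq F$ with $\mu(E')=\mu_{max}(E)$ and $\mu(F')=\mu_{max}(F)$, the subspace $E'\otimes F'\subseteq E\otimes F$ satisfies $\mu(E'\otimes F')=\mu(E')+\mu(F')$ because $\deg$ is additive and $\rk$ multiplicative under $\otimes$; hence $\mu_{max}(E\otimes F)\ge\mu_{max}(E)+\mu_{max}(F)$, that is $H_{min}(E\otimes F)\le H_{min}(E)H_{min}(F)$. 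Everything therefore rests on the reverse bound $\mu_{max}(E\otimes F)\le\mu_{max}(E)+\mu_{max}(F)$.

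First I would reduce this bound to the semistable case. Running the canonical (Harder--Narasimhan) filtrations of $E$ and $F$ and forming the induced filtration of $E\otimes F$, whose graded pieces are the tensor products of the graded pieces, the elementary inequality $\mu_{max}(G)\le\max(\mu_{max}(G'),\mu_{max}(G''))$ for an extension $0\to G'\to G\to G''\to 0$ reduces us to bounding $\mu_{max}$ of a tensor product of two semistable lattices; and for semistable lattices the bound we want is exactly the assertion that their tensor product is semistable. Here I would use isoduality in three ways. First, $E\otimes F$ is isodual whenever $E$ and $F$ are, so we remain inside the class under study. Secondly, an isodual lattice $L$ has $\deg L=0$ and a canonical polygon symmetric about its midpoint; applying duality to the quotient of $L$ by its maximal destabilising subspace $L_1$ (which embeds isometrically into $L^\vee\simeq L$) forces $\rk L_1\le\tfrac12\rk L$, and the symmetry propagates to the whole filtration. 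Thirdly, for semistable isodual $E$ and $F$ one has $\mu_{max}(E)=\mu_{max}(F)=0$, so the claim becomes simply that $E\otimes F$ is semistable, i.e. that its symmetric canonical polygon is flat.

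To force flatness I would feed the isodual symmetry into the known unconditional estimate of the form $\mu_{max}(E\otimes F)\le\mu_{max}(E)+\mu_{max}(F)+\tfrac12\log\min(\rk E,\rk F)$ (see \cite{gaudron2018}): with $\mu_{max}(E)=\mu_{max}(F)=0$ this confines the canonical polygon of $E\otimes F$ to a strip of half-width $c=\tfrac12\log\min(\rk E,\rk F)$, while the rank bound $\rk(E\otimes F)_1\le\tfrac12\rk E\,\rk F$, together with its iterates along the symmetric filtration, constrains the positions of the breakpoints. An alternative, analytic route is to use that isoduality of $E\otimes F$ yields a functional equation for the theta series $\Theta_{E\otimes F}$ and to translate the resulting rigidity into an upper bound for the lengths of short vectors in sublattices, hence for $\mu_{max}$. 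The main obstacle is that neither ingredient is by itself decisive: the semistable case of Conjecture \ref{bc} is open in general, the Hermite-type defect $\tfrac12\log\min(\rk E,\rk F)$ does not vanish, and the symmetry of an isodual polygon, although genuinely restrictive, does not on its own make it flat. A realistic target -- and the one I would develop in the body of the paper -- is therefore the equality for isodual $E$ and $F$ under a bound on the ranks small enough that the constraints $\rk(E\otimes F)_1\le\tfrac12\rk E\,\rk F$ and their iterates leave no room for a non-trivial symmetric polygon inside the strip of half-width $c$, supplemented by a direct verification of semistability of $E\otimes F$ for a few explicit small-rank isodual families.
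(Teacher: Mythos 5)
The statement you are trying to prove is Conjecture \ref{bc} itself: it is open, the paper does not prove it, and your proposal does not close it either -- indeed you concede the decisive step yourself. The genuine gap is exactly where you stop: after reducing (correctly, and much as the paper does in Corollary \ref{c1}, Proposition \ref{redi} and Theorem \ref{redsi}) to showing that the tensor product of two semistable isodual spaces is semistable, no combination of the ingredients you invoke forces the canonical polygon of $E\otimes F$ to be flat. The symmetry of the polygon of an isodual space (Proposition \ref{gsi}), the isotropy bound $\dim (E\otimes F)_1\le\tfrac12\dim(E\otimes F)$ (Corollary \ref{mtis}), and the Hermite-type estimate with defect $\tfrac12\log\min(\rk E,\rk F)$ are all simultaneously compatible with a non-trivial symmetric polygon -- for instance a single breakpoint at half the rank with a small positive maximal slope -- so there is no contradiction to extract, and your ``realistic target'' paragraph is an acknowledgement that the argument does not terminate. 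The theta-series functional equation likewise gives no known control on $\mu_{max}$ of sublattices.

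Two further points need repair even in the partial statements. First, isoduality in this paper means a \emph{similarity} $\sigma:E\to E^{\vee}$, not an isometry; hence $\deg E=0$ and $\mu_{max}(E)=0$ for semistable isodual $E$ are not automatic (one must first rescale, as in the proofs of Proposition \ref{redi} and Theorem \ref{redsi}), and your claim that $E/E_1$ ``embeds isometrically'' into $E^{\vee}\simeq E$ should be replaced by the paper's mechanism: $\sigma E_1=E_{\ell-1}^{\perp}$, so $E_1$ is totally isotropic for $b_{\sigma}$, which is what yields the rank bound. Second, the bound $\dim E_1\le\tfrac12\dim E$ alone never suffices for any unconditional case; the paper's actual theorems (Theorem \ref{th1}, Theorem \ref{sgn}, Theorem \ref{mix}) obtain results only by combining the isotropy of $E_1$ with respect to the \emph{local} forms $b_{\sigma_v}$ with signature hypotheses (definite or Lorentzian at some Archimedean place) and with the low-rank theorem of Bost--Chen, or with multiplicity assumptions on the $\Aut E$-module structure (Corollary \ref{mf} and the final proposition of section \ref{sec4}). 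If you want a provable statement, aim at results of that conditional type rather than at Conjecture \ref{bc} in full.
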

Such a property is known to hold in several contexts where a similar slope filtration is available (see \eg \cite{MR2571693}, where the term "tensor multiplicativity" is introduced). The proofs are most often difficult, and no really unified approach has emerged. In the case of 
Hermitian vector bundles, the conjecture has been proved for small ranks by Bost and Chen \cite{MR3035951}. Particular cases, independent of the dimension, have also been established. For instance, the conjecture is obviously true for \emph{unimodular} Euclidean lattices. Recall that a Euclidean lattice $L$ is unimodular if it coincides with its dual  $L^{\star}:=\left\lbrace y \in \mathbf{R} L ,\; \forall x \in L, \; y \cdot x \in\mathbf{Z} \right\rbrace$, where "$\cdot$" stands for the Euclidean inner product on $\mathbf{R}L$. In particular, the reduced height of a unimodular lattice is $1$, less than or equal to that of any of its sublattices. The same property holds obviously for the tensor product of two unimodular lattices, since it is also unimodular. We will see in section \ref{SS4} another interpretation of this property, which is the key of the main results in this paper.

In the light of this simple example, it seems natural to expect a special behaviour of the GS-	filtration of so-called \emph{isodual lattices}, introduced by Conway and Sloane in \cite{MR1293868} and studied by different authors from a variety of perspectives (in particular, \emph{symplectic} isodual lattices play a significant role in the study of abelian varieties , see \cite{MR1269424}). 

After reviewing the essential facts about heights and slope filtration in section \ref{S21}, we introduce in section \ref{S22} a general notion of isoduality. We observe in particular (Proposition \ref{redi}) that to prove Conjecture \ref{bc}, one can restrict to \emph{isodual} spaces. We then investigate in section \ref{SS4} the properties of the slope filtration of isodual rigid analytic spaces. The main observation, from which we derive several results, is that  the destabilizing subspace of an isodual rigid analytic space is totally isotropic with respect to a naturally defined bilinear form. The aim of section\ref{sec5} is to further reduce the proof of Conjecture \ref{bc} to the case of \emph{semistable isodual} rigid analytic spaces. Finally, as a continuation of the recent works \cite{CN3} and \cite{MR4002393}, we examine in section\ref{sec4} the influence of the automorphism group, and the associated representation, on the slope filtration of an isodual rigid analytic space. This leads us to formulate a Conjecture \ref{bciso2} -- a special case of Conjecture \ref{bc} -- which seems to be the correct "isodual" analogue of the result proved by Rémond for multiplicity free action of groups on rigid analytic spaces \cite[Théorème 1.1]{MR4002393}.

\section{Review of Hermitian bundles and semistability}\label{S21}
Let $K$ be a number field, $V(K)=V_f \cup V_{\infty}$ its set of places - finite and infinite - and $\mathcal{O}_K$ its ring of integers. Each place $v$ is associated with a normalized absolute value $\vert\cdot\vert_v$, which is the standard modulus at an Archimedean place, and is defined, at an ultrametric place $v$ associated to a prime ideal $\mathfrak{p}$, by  $\vert x\vert _v=N\mathfrak{p}^{-\ord_{\mathfrak{p}}(x)}$, where $N\mathfrak{p}=\vert \mathcal{O}_K\slash \mathfrak{p}\vert$ is the norm of the prime ideal $\mathfrak{p}$ ; the completion of $K$ with respect to this absolute value is denoted $K_v$.

If $E$ is a finite dimensional $K$-vector space, its completion $E\otimes_K K_v$ at a place $v$ is denoted $E_v$. 

A Hermitian vector bundle over Spec$({\mathcal O}_K)$ (\cite{MR1423622}), or equivalently an ${\mathcal O}_K$-lattice (\cite{MR780079, MR870711}), is the data $(L, (h_v)_{v \in V_{\infty}})$ of a finitely generated 
projective  ${\mathcal O}_K$-module $L$ together with a collection of positive definite symmetric (resp. Hermitian) forms $h_v$ on the completions $E_v$ of the $K$-vector space $E=L \otimes_{\mathcal{O}_K}K$ at real (resp. complex) Archimedean places, assumed to be \emph{invariant under complex conjugation} (see remark below). Hereafter, all Hermitian forms over a complex vector space $V$  are, by convention, 
\emph{antilinear} in the first variable, and linear in the second \[  \forall  (\lambda, \mu) \in \mathbb{C}^2,  \ \forall (x,y) \in V^2, \ h(\lambda x,\mu y)= \overline{\lambda}h(x,y)\mu. \]

\begin{remark}\label{cplex}
	A complex Archimedean place $v$ corresponds to a pair $\left\lbrace \gamma, \overline{\gamma} \right\rbrace$ of embeddings $K \hookrightarrow \mathbf{C}$, conjugated to each other. Each of them allows to identify $K_v$ with $\mathbf{C}$, giving rise to two distinct realizations of $E_v$, denoted $E_{\gamma}:=E\otimes_\gamma\mathbf{C}$ and $E_{_{\overline{\gamma}}}:=E\otimes_{_{\overline{\gamma}}}\mathbf{C}$, depending on which embedding $K \hookrightarrow \mathbf{C}$ is chosen. The "complex conjugation" is the canonical  $\mathbf{C}$-\emph{anti}linear isomorphism from $E_{\gamma}$ onto $E_{_{\overline{\gamma}}}$ defined by 
	\begin{equation}\label{cj}
	\forall x \in E, \forall \lambda \in \mathbf{C}, \  \overline{x \otimes_\gamma \lambda} := x \otimes_{_{\overline{\gamma}}} \overline{\lambda}.
	\end{equation}
	The Hermitian form $h_v$ thus consists in the data of two Hermitian forms $h_{\gamma}$ and $h_{\overline{\gamma}}$, respectively on $E_{\gamma}$ and $E_{_{\overline{\gamma}}}$, satisfying the following invariance under complex conjugation :
	\begin{equation*}
	\forall (x,y) \in E_{\gamma}\times  E_{\gamma}, \  \ h_{\overline{\gamma}}(\overline{x},\overline{y})=\overline{h_{\gamma}(x,y)}.
	\end{equation*}
\end{remark}

Equivalently, these data define a \emph{rigid adelic space} $\left( E, \left( \Vert \cdot \Vert _v \right) _{v \in V(K)}\right)$, using the terminology of \cite{MR3641657},  with local norms on each completion $E_v:= E\otimes K_v$ defined by
\begin{equation}\label{ras}
	 \forall x \in E_v\, , \ \Vert x \Vert _v =\begin{cases}
	\sqrt{h_v(x,x)} \text{ if } v \in V_{\infty}\\
		\inf\left\lbrace \vert \alpha \vert_v\, ,\ \alpha \in K_v\, , \ x \in \alpha L \right\rbrace \text{ if } v \in V_f.
\end{cases}
\end{equation}
It follows from the well-known classification of modules over Dedekind rings  (see \eg \cite[Th. 81:3]{MR1754311}) that the ${\mathcal O}_K$-module $L$ admits a \textit{pseudo-basis} $\left(\mathfrak a_i, b_i\right) _{1 \leq i \leq \ell}$, where $\mathfrak a_1, \dots \mathfrak a_{\ell}$ are fractional ideals, and $\left\lbrace b_1, \dots, b_{\ell} \right\rbrace$ is a $K$-basis of $E$ such that
\begin{equation}\label{psb} 
	L=\bigoplus_{i=1}^\ell \mathfrak a_i b_i.
\end{equation} 

By abuse of notation, we use the same letter $E$ to denote a Hermitian vector bundle (resp. a rigid adelic space) and its underlying $K$-vector space. 
If $E\neq \left\lbrace 0 \right\rbrace$, one defines its (normalized) height as
\begin{equation}\label{height}
H(E)=\left( \N\left( \prod_{i=1}^{\ell}\mathfrak a _i \right) \prod _{v \in V_\infty } 
\det(h_{v }(b_i,b_j))^{e_{v}/2}  \right)^{1/[K:\mathbb{Q}]}
\end{equation}
where $e_{v}=1$ or $2$ according to whether $v$ is real or complex, 
and we set $H(\left\lbrace 0 \right\rbrace)=1$. This corresponds, in the terminology of \cite{MR780079}, to the (normalized) volume of the $\mathcal{O}_K$-lattice $L$ defining the finite part of the rigid adelic structure, with respect with the Hermitian metrics at infinite places (it does of course not depend on the choice of a pseudo-basis for $L$, see \cite[81:8]{MR1754311}). One can check that this definition is equivalent to that of \cite{MR3641657}.

If $E\neq \left\lbrace 0 \right\rbrace$, one also define its \emph{reduced height} as
\begin{equation*}
H_r(E)=H(E)^{1/\dim E}.
\end{equation*}

If $t$ is a positive real number, we obtain a new rigid analytic space $E[t]$ by multiplying each of the Archimedean local norms at infinite places by $t$. In view of \eqref{height}, the effect on the reduced height is given by the relation 
\begin{equation}\label{scale}
H_r(E[t])=tH_r(E).
\end{equation}

Any subspace $F$ of a rigid adelic space $E$ inherits the structure of a rigid adelic space, by restricting the local norms at all places. In the language of Hermitian bundles, it amounts to replace the $\mathcal{O}_K$-lattice $L$ with $L \cap F$, and restrict to $F$ the Hermitian forms at infinite places. Consequently, one defines, for all positive integer $k$,
\begin{equation}\label{hk}
	H^{(k)}(E)=\min\limits_{\substack{0 \neq F \subset E\\ \dim F = k}} H_r(F)
\end{equation}
and 
\begin{equation}\label{hmin}
	H_{min}(E)=\min\limits_{0 \neq F \subset E} H_r(F).
\end{equation}
\begin{remark}
	Because of \eqref{scale}, one has \begin{equation}\label{hmt}
		\forall t>0, \  H_{min}(E[t])= t H_{min} (E).
	\end{equation}
\end{remark}
\medskip

To any  $K$-linear map $\sigma : E \rightarrow F$  between rigid adelic spaces one can associate a family of localised maps $\sigma_v : E\otimes K_v \rightarrow F\otimes K_v$ defined as usual by
\begin{equation}\label{sloc}
	\forall x \in E, \forall \lambda \in K_v, \ 	\sigma_v(x \otimes \lambda) = \sigma(x) \otimes \lambda 
\end{equation} 
and extended by bilinearity.
\begin{remark}
	At a complex place $v$, associated to a pair $\left\lbrace \gamma,\overline{\gamma} \right\rbrace$ of complex embeddings, there are two realizations $\sigma_{\gamma}$ and $\sigma_{\overline{\gamma}}$ of $\sigma_v$, depending on the choice of the embedding of $K$ in $\mathbf{C}$ (remark \ref{cplex}).
\end{remark}

\begin{defn}
	An \emph{isometry} between two rigid adelic spaces $E$ and $F$ over $K$ is a $K$-linear map $\sigma : E \rightarrow F$ such that the localised maps $\sigma_v : E\otimes K_v \rightarrow F\otimes K_v$ preserve the local norms for all $v \in V$. 
\end{defn}

The quotient $E/F$  of a rigid adelic space by a subspace
also inherits a canonical structure of rigid adelic space, using quotient norms (see  \cite[\S 2]{MR3641657}). To express it in terms of  Hermitian bundles, one has to consider the quotient $\mathcal{O}_K$-lattice $L / L \cap F$, and use the identification $E/F\otimes K_v \simeq F_v^{\perp_{h_v}}$ (orthogonal complement with respect to $h_v$) at infinite places to define Hermitian structures, see \cite{MR780079}.\medskip

 Similarly, the operator norms induce on the dual space $E^{\vee}=\Hom_K(E,K)$ an adelic structure which can also be viewed  as the structure induced by the $\mathcal{O}_K$-lattice  $L^{\vee}=\Hom_{\mathcal{O}_K}(L,\mathcal{O}_K)$ equipped with the Hermitian forms
\begin{equation*}
h^{ \vee}_v(y^{\vee},y^{\vee})=\sup_{0 \neq x \in E}\dfrac{\vert y^{\vee}(x)\vert_v^2}{h_v(x,x)}
\end{equation*}
at infinite places.
Note in particular that
\begin{equation}\label{dual}
H_r(E^{\vee})=H_r(E)^{-1}
\end{equation}
and 
\begin{equation}\label{scaledual}
	E[t]^{\vee}=E^{\vee}[t^{-1}] \text{ for all } t>0.
\end{equation}
As usual, the orthogonal in $E^{\vee}$ of a subspace $F$ of $E$ is defined as
\begin{equation}\label{perp}
F^{\perp}:=\left\lbrace \varphi \in E^{\vee} \mid \varphi(F)=0\right\rbrace.
\end{equation}
As one might expects, the previous notions are related through the following property :
\begin{proposition}\cite[Proposition 3.6]{MR3641657}\label{dq} For any subspace $F$ of a rigid analytic space $E$, one has $E^{\vee}\slash F^{\perp} \simeq F^{\vee}$.
\end{proposition}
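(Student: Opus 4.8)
The plan is to realise the isomorphism concretely as the \emph{restriction of linear forms} and then to check, place by place, that it is an isometry. First I would observe that, choosing any $K$-linear complement of $F$ in $E$, the restriction map $\rho : E^{\vee}\to F^{\vee}$, $\varphi\mapsto\varphi\restr{F}$, is surjective, and that by the very definition \eqref{perp} of $F^{\perp}$ its kernel is exactly $F^{\perp}$; hence $\rho$ descends to a $K$-linear isomorphism $\bar\rho : E^{\vee}/F^{\perp}\xrightarrow{\ \sim\ }F^{\vee}$. The whole content of the proposition is then that $\bar\rho$ respects the adelic structures, \ie is an isometry.

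Since the dual, quotient and subspace adelic structures are all defined place by place and commute with base change to $K_v$, I would reduce to a local assertion: for a finite-dimensional normed $K_v$-space $W$ of the type occurring in \eqref{ras} and a subspace $U\subseteq W$ carrying the restricted norm, restriction induces an isometry $W^{\vee}/U^{\perp}\simeq U^{\vee}$, where $W^{\vee}$ and $U^{\vee}$ carry operator norms and $W^{\vee}/U^{\perp}$ the quotient norm. One inequality here is automatic: if $\varphi\in W^{\vee}$ and $\psi\in U^{\perp}$ then $(\varphi+\psi)\restr{U}=\varphi\restr{U}$, whence $\Vert\varphi\restr{U}\Vert_{U^{\vee}}\le\Vert\varphi+\psi\Vert_{W^{\vee}}$, and taking the infimum over $\psi\in U^{\perp}$ gives $\Vert\varphi\restr{U}\Vert_{U^{\vee}}\le\Vert\varphi+U^{\perp}\Vert_{\mathrm{quot}}$. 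The reverse inequality is precisely a norm-nonincreasing extension statement: every $\lambda\in U^{\vee}$ must extend to an element of $W^{\vee}$ of operator norm at most $\Vert\lambda\Vert_{U^{\vee}}$.

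To produce that extension I would split according to the type of place. At an Archimedean place the norm is $\sqrt{h_v(\cdot,\cdot)}$, so $W$ is a finite-dimensional real or complex Hilbert space; the Riesz representation shows that $\lambda$ is given by pairing with a vector of $U$, and pairing with that same vector defines an extension to $W$ of equal norm (equivalently, one may invoke the classical Hahn--Banach theorem). At a finite place I would use that $K_v$ is locally compact, hence spherically complete, so that Ingleton's ultrametric Hahn--Banach theorem yields a norm-preserving extension; alternatively, and more in the spirit of the $\mathcal{O}_K$-lattice picture, one can argue integrally: with $L_v=L\otimes_{\mathcal{O}_K}\mathcal{O}_v$ and $M_v=L_v\cap U$, the quotient $L_v/M_v$ embeds into the $K_v$-vector space $W/U$, hence is torsion-free and therefore free over the discrete valuation ring $\mathcal{O}_v$, so $M_v$ is a direct summand of $L_v$ and the restriction map $L_v^{\vee}\twoheadrightarrow M_v^{\vee}$ splits while respecting the integral structures --- which is the required extension in disguise. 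Combining the two inequalities at every place shows that $\bar\rho$ is an isometry, which is the claim.

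The step I expect to be the main obstacle is the non-Archimedean extension: one has to make sure the hypotheses are genuinely in force, so that the operator norm of the extension is actually equal to $\Vert\lambda\Vert_{U^{\vee}}$ rather than merely approximated; the discreteness of the value set $\vert K_v^{\times}\vert_v\cup\{0\}$ together with the (spherical) completeness of $K_v$ are what make it work, and the integral direct-summand argument is the clean way to see this, modulo the small verification that $L_v\cap U$ is saturated in $L_v$. The Archimedean case, by contrast, is just the Hilbert-space computation and should be routine.
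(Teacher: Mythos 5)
Your argument is correct. Note, however, that the paper does not prove this statement at all: it is quoted verbatim from Gaudron--R\'emond \cite{MR3641657}, so there is no internal proof to compare with. Your route --- identify the map as restriction of linear forms, reduce to a place-by-place norm computation, observe that the inequality $\Vert\varphi\restr{F_v}\Vert\le\Vert\varphi+F_v^{\perp}\Vert_{\mathrm{quot}}$ is formal, and obtain the reverse inequality from a norm-preserving extension (Riesz/orthogonal projection in the Hilbert space at Archimedean places; at finite places either Ingleton's ultrametric Hahn--Banach or, more concretely, the fact that $L_v\cap F_v$ is saturated in $L_v$, hence a direct summand over the discrete valuation ring $\mathcal{O}_v$, after scaling $\lambda$ by a scalar realizing its norm, which lies in the value group by discreteness) --- is essentially the standard proof of this compatibility, and all the points you flag as delicate (saturation of $L_v\cap F_v$, attainment rather than approximation of the extension norm) are handled correctly. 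The only cosmetic remark is that you do not need both discreteness of the value group and spherical completeness: either the integral splitting argument or Ingleton's theorem suffices on its own.
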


The direct product $E \times F$ of two adelic rigid spaces is equipped with the local norms
\[  \Vert (x,y)\Vert_v = \begin{cases}
 	\left( \Vert x \Vert_v^2 + \Vert y \Vert_v ^2 \right)^{1/2} \text{ at infinite places}, \\  \max \left\lbrace \Vert x \Vert_v, \Vert y \Vert_v \right\rbrace \text{ at finite places.}
 \end{cases} \]
In terms of Hermitian bundles, it corresponds to the usual direct sum of $\mathcal{O}_K$-modules, endowed, at each infinite place,  with the orthogonal direct sum of the corresponding Hermitian forms.\medskip

Finally, the tensor product of adelic spaces/Hermitian bundles is defined naturally with either point of view, see \emph{loc. cit}.

\begin{remark}
	In the references \cite{MR3035951} and \cite{gaudron2018} on which we rely, the authors call Hermitian direct \emph{sum} $E\oplus F$ of rigid adelic spaces what we just defined as their direct \emph{product} $E\times F$. The reason why we chose to avoid the direct sum notation/terminology is that we think it can occasionally be misleading : for instance, if $F$ and $F'$ are subspaces of a given rigid adelic space $E$ intersecting trivially, the structure of adelic rigid spaces induced by $E$ on the subspace $F \oplus F'$ \emph{is not} their Hermitian direct sum in general.
\end{remark}
 
A key property of reduced height is that the set of subspaces of a given Hermitian bundle $E$ with minimal reduced height has a well-defined maximum $E_1$ with respect to inclusion, called the destabilizing subspace of $E$ (see see \emph{e.g.} \cite[Satz 1]{MR0424707} for a proof in the case of lattices and \cite{gaudron2018} for the general case). In other words, any rigid analytic space $E$ contains a unique subspace $E_1$ characterized by the following two properties :
\begin{enumerate}
	\item $H_r(E_1)=H_{min}(E)=\min\limits_{0 \neq F \subset E} H_r(F)$.
	\item  Any subspace $F$ of $E$ such that $H_r(F)=H_{min}(E)$ is contained in $E_1$.
\end{enumerate}

A rigid analytic space $E$ is \emph{stable} if $H_r(F) > H_r(E)$ for all proper subspace $\left\lbrace 0 \right\rbrace \subsetneq F \subsetneq E$, \emph{semistable} if $H_r(F) \geq H_r(E)$ for all subspace $F$, and \emph{unstable} if it is not semistable. 
In particular, $E$ is semistable if and only if it coincides with its destabilizing subspace $E_1$.

We denote by 
\begin{equation*} 
\left\lbrace 0 \right\rbrace=E_0 \subset E_1 \subset \dots \subset E_{\ell-1} \subset E_{\ell}=E
\end{equation*}
the Grayson-Stuhler filtration of $E$ ("GS-filtration of $E$" for short) , defined recursively as follows :
\begin{enumerate}
	\item $E_1$ is the destabilizing subspace of $E$.
	\item For $i\geq 2$, $E_i/E_{i-1}$ is the destabilizing subspace of $E/E_{i-1}$.
\end{enumerate}
An alternative characterization of this filtration is given by the following proposition :
\begin{proposition}\cite[Corollary 1.30]{MR780079}\label{gsc}
	The GS-filtration of $E$ is the unique flag $(E_i)_{0 \leq i \leq \ell}$ such that
	\begin{enumerate}
		\item $E_i\slash E_{i-1}$ is semistable for all $1 \leq i \leq \ell$, 
		\item If $\ell>1$ then $H_r\left(  E_i\slash E_{i-1}\right) < H_r\left( E_{i+1}\slash E_i \right)$  for all $1\leq i \leq \ell-1$.
	\end{enumerate}
\end{proposition}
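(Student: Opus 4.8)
The plan is to prove two things: that the Grayson--Stuhler filtration $(E_i)$ does satisfy conditions (1) and (2), and that these conditions determine it uniquely. The only analytic input, besides the existence and the two characterizing properties of the destabilizing subspace recalled above, is the additivity of the degree in a short exact sequence: for any subspace $F$ of a rigid adelic space $E$, equipped with the induced structures, $\deg E=\deg F+\deg(E/F)$, equivalently
\[ \dim E\cdot\log H_r(E)=\dim F\cdot\log H_r(F)+\dim(E/F)\cdot\log H_r(E/F). \]
I shall also use repeatedly that a subspace of a semistable space has reduced height $\geq$ that of the ambient space, which is the very definition of semistability.

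\emph{Existence.} Condition (1) is immediate: by construction $E_i/E_{i-1}$ is the destabilizing subspace of $E/E_{i-1}$, hence equals its own destabilizing subspace. For (2) I first record the general fact that if $D$ is the destabilizing subspace of $V$ and $D\neq V$, then $H_{min}(V/D)>H_{min}(V)$. Indeed, a nonzero subspace of $V/D$ is $W/D$ for a unique $W$ with $D\subsetneq W\subseteq V$; were $H_r(W/D)\leq H_{min}(V)=H_r(D)$, then the additivity relation applied to $0\to D\to W\to W/D\to 0$, together with $H_r(W)\geq H_{min}(V)$, would force $H_r(W)=H_r(D)=H_{min}(V)$, hence $W\subseteq D$ by maximality of $D$, i.e.\ $W=D$ and $W/D=\{0\}$, absurd. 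So every nonzero subspace of $V/D$ has reduced height $>H_{min}(V)$, and choosing the one attaining $H_{min}(V/D)$ gives the claim. Applying it with $V=E/E_{i-1}$, $D=E_i/E_{i-1}$ and using $E/E_i\simeq(E/E_{i-1})/(E_i/E_{i-1})$ yields $H_r(E_{i+1}/E_i)=H_{min}(E/E_i)>H_{min}(E/E_{i-1})=H_r(E_i/E_{i-1})$, which is (2).

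\emph{Uniqueness.} Let $(F_j)_{0\leq j\leq m}$ be any flag satisfying (1) and (2), and put $h_j:=H_r(F_j/F_{j-1})$, so $h_1<\dots<h_m$ by (2). The key step is to show $F_1=E_1$. Given a nonzero subspace $G\subseteq E$, filter it by the $G\cap F_j$; each subquotient $(G\cap F_j)/(G\cap F_{j-1})$ is, with its induced structure, a subspace of the semistable space $F_j/F_{j-1}$, hence, when nonzero, has reduced height $\geq h_j\geq h_1$. Telescoping the additivity $\deg G=\sum_j\deg((G\cap F_j)/(G\cap F_{j-1}))$ (note $\deg\{0\}=0$) and translating back into reduced heights therefore yields $\dim G\cdot\log H_r(G)\geq\dim G\cdot\log h_1$, i.e.\ $H_r(G)\geq h_1=H_r(F_1)$; hence $H_{min}(E)=H_r(F_1)$. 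If moreover $H_r(G)=h_1$ but $G\not\subseteq F_1$, let $k\geq 2$ be minimal with $G\subseteq F_k$; then $(G\cap F_k)/(G\cap F_{k-1})=G/(G\cap F_{k-1})$ is nonzero and embeds in $F_k/F_{k-1}$, so has reduced height $\geq h_k>h_1$, which makes the previous estimate strict and contradicts $H_r(G)=h_1$. Thus $G\subseteq F_1$ for every such $G$, so $F_1$ enjoys both defining properties of the destabilizing subspace of $E$, i.e.\ $F_1=E_1$. Finally, $(F_j/F_1)_{1\leq j\leq m}$ is a flag of $E/E_1$ whose nonzero subquotients are the $F_j/F_{j-1}$, $j\geq 2$, so it again satisfies (1) and (2); by induction on $\dim E$ it coincides with the GS-filtration $(E_j/E_1)_{1\leq j\leq\ell}$ of $E/E_1$, and hence $m=\ell$ and $F_j=E_j$ for all $j$.

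\emph{Main obstacle.} There is no deep difficulty once the two black boxes---additivity of the degree in short exact sequences and the existence/characterization of the destabilizing subspace---are granted; what remains is a bookkeeping argument organized around the inequality $\dim G\cdot\log H_r(G)\geq\dim G\cdot\log h_1$. The one point that genuinely needs care is checking that the structure induced on $(G\cap F_j)/(G\cap F_{j-1})$ coincides with its structure as a subspace of $F_j/F_{j-1}$, so that the semistability of the latter may legitimately be invoked for it; this is routine place by place, using that the quotient norm commutes with restriction to an intermediate subspace, and that at finite places $(L\cap(G\cap F_j))/(L\cap(G\cap F_{j-1}))$ is precisely the $\mathcal{O}_K$-lattice cut out on the subquotient.
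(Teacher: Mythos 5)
The paper itself gives no proof of this proposition: it is quoted from Grayson \cite{MR780079}, where it is deduced from the formalism of the canonical plot/polygon (the convex hull of the points $(\dim F,\deg F)$ over all subspaces $F$). Your argument instead works directly from the two characterizing properties of the destabilizing subspace together with the additivity $\deg E=\deg F+\deg(E/F)$, proving existence and then uniqueness by identifying $F_1$ with $E_1$ and inducting on the dimension. This is a legitimate and standard alternative route, and the skeleton is sound: the strict increase of the minimal reduced height after dividing by the destabilizing subspace, the inequality $H_r(G)\geq h_1$ obtained by filtering $G$ by the $G\cap F_j$, and the strictness argument forcing $G\subseteq F_1$ when $H_r(G)=h_1$ are all correct, as is the recognition that $F_1$ then satisfies both defining properties of $E_1$.

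The one point that needs repair is the justification offered in your closing paragraph. It is \emph{not} true that the adelic structure on $(G\cap F_j)/(G\cap F_{j-1})$ as a subquotient of $G$ coincides with the structure it inherits as a subspace of $F_j/F_{j-1}$: already for $E=\Q^2$ with the standard structure, $F_1=\Q e_1$ and $G=\Q(e_1+e_2)$, the class of $e_1+e_2$ has norm $\sqrt{2}$ in $G/(G\cap F_1)=G$ but norm $1$ as an element of $E/F_1$. The reason is that the infimum defining the quotient norm on the subquotient is taken over the smaller space $(G\cap F_{j-1})\otimes K_v$ rather than over $F_{j-1}\otimes K_v$, and at finite places the lattice of $F_j/F_{j-1}$ may strictly contain the image of the lattice of $(G\cap F_j)/(G\cap F_{j-1})$. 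What is true, and is all your telescoping argument requires, is that the natural injection $(G\cap F_j)/(G\cap F_{j-1})\hookrightarrow F_j/F_{j-1}$ does not increase any local norm; consequently the reduced height of the subquotient is at least that of its image equipped with the subspace structure, which by semistability of $F_j/F_{j-1}$ is at least $h_j$. With this inequality in place of the claimed isometry, the proof goes through; note that the genuine isometry you do use elsewhere --- identifying $W/D\subseteq V/D$ with the quotient of $W$ by $D$ when $D\subseteq W$, in the existence step --- is correct, precisely because there the subspace contains the space one divides by.
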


\begin{defn}
The integer $\ell$ is called the \emph{length} of the GS-filtration of $E$. In particular, $E$ is semistable if and only if the length of its GS-filtration is equal to $1$.
\end{defn}

The uniqueness property entails two remarkable properties of the GS-filtration :  it is invariant under automorphisms (see section \ref{sec4}), and scalar extension (see \eg \cite[Proposition 19]{gaudron2018}). 

We end this section with a useful lemma, well-known to the experts, about the behaviour of $H_{min}$ with respect to quotients and products. 
\begin{lemma}\label{ds} \quad
	\begin{enumerate} 
		\item\label{ena} For any two rigid analytic spaces $\left\lbrace 0 \right\rbrace \neq F \subset E$, one has
	\[ \min(H_{min}(F),H_{min}(E/F)) \leq H_{min}(E) \leq H_{min}(F). \]
	In particular, $H_{min}(E)=H_{min}(F)$ if $H_{min}(F)\leq H_{min}(E/F))
$.	\item The minimal reduced height of the 
direct product 
of two rigid analytic spaces is given by
\[ 	H_{min}(E \times F)= \min (H_{min}(E), H_{min}(F)). \]
\end{enumerate}	
\end{lemma}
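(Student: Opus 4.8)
The plan is to deduce both statements from two standard facts about rigid adelic spaces. The first is multiplicativity of the height in short exact sequences: for every subspace $F\subset E$ one has $H(E)=H(F)\,H(E/F)$; equivalently, setting $\deg E=-\log H(E)$ and $\mu(E)=\deg E/\dim E$ as in the introduction, the degree is additive, so that for any subspace $G'\subset G$ the slope $\mu(G)$ is the dimension-weighted average of $\mu(G')$ and $\mu(G/G')$. The second is that a $K$-linear isomorphism $\sigma\colon A\to B$ between rigid adelic spaces which is norm-nonincreasing at every place (i.e. $\|\sigma_v(x)\|_v\le\|x\|_v$ for all $v$ and $x$) satisfies $H(B)\le H(A)$. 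I will also use freely that subspaces and quotients carry the induced structures recalled in Section \ref{S21}, together with the trivial monotonicity $H_{min}(E)\le H_r(G)$ for every nonzero subspace $G\subset E$, which is immediate from \eqref{hmin}.

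For item \ref{ena}, the inequality $H_{min}(E)\le H_{min}(F)$ is clear: the minimum \eqref{hmin} defining $H_{min}(E)$ ranges over all nonzero subspaces of $E$, which include all nonzero subspaces of $F$, and a subspace of $F$ carries the same induced structure whether regarded inside $F$ or inside $E$. For the left-hand inequality I would take $G$ to be the destabilizing subspace of $E$, so that $H_r(G)=H_{min}(E)$, and let $\pi\colon E\to E/F$ be the quotient map. If $G\subseteq F$ there is nothing to prove, since then $H_{min}(F)\le H_r(G)=H_{min}(E)$. Otherwise $\pi(G)$ is a nonzero subspace of $E/F$, and the short exact sequence $0\to G\cap F\to G\to G/(G\cap F)\to 0$ together with additivity of the degree shows that $\mu(G)$ is a dimension-weighted average of $\mu(G\cap F)$ and $\mu(G/(G\cap F))$ (with the first term omitted if $G\cap F=\{0\}$), hence $\mu(G)\le\max(\mu(G\cap F),\,\mu(G/(G\cap F)))$. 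When $G\cap F\neq\{0\}$ it is a nonzero subspace of $F$, so $\log H_r(G\cap F)=-\mu(G\cap F)\ge\log H_{min}(F)$. On the other hand the natural $K$-linear isomorphism $G/(G\cap F)\to\pi(G)$ is norm-nonincreasing at every place — at the Archimedean places because the infimum defining the quotient norm of $E/F$ runs over $F_v\supseteq(G\cap F)_v$, and at the finite places because the $\mathcal{O}_K$-lattice of $G/(G\cap F)$ maps into the one of $\pi(G)$ — so that $H_r(G/(G\cap F))\ge H_r(\pi(G))\ge H_{min}(E/F)$, i.e. $-\mu(G/(G\cap F))\ge\log H_{min}(E/F)$. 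Putting these together, $-\log H_{min}(E)=\mu(G)\le\max(-\log H_{min}(F),\,-\log H_{min}(E/F))$, which is exactly the asserted inequality. The final sentence of item \ref{ena} then drops out: if $H_{min}(F)\le H_{min}(E/F)$ the inequality just proved reads $H_{min}(F)\le H_{min}(E)$, which combined with $H_{min}(E)\le H_{min}(F)$ gives equality.

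For item (2), I would first note that the coordinate embeddings $E\hookrightarrow E\times F$ and $F\hookrightarrow E\times F$ are isometries onto their images, since $\|(x,0)\|_v=\|x\|_v$ at every place (at infinite places $(\|x\|_v^2+0)^{1/2}=\|x\|_v$, at finite places $\max(\|x\|_v,0)=\|x\|_v$). Consequently every nonzero subspace of $E$ or of $F$ gives a nonzero subspace of $E\times F$ of the same reduced height, so $H_{min}(E\times F)\le\min(H_{min}(E),H_{min}(F))$. For the reverse inequality I would identify $E$ with $E\times\{0\}\subset E\times F$ and observe that the quotient $(E\times F)/(E\times\{0\})$ is canonically isometric to $F$ — at infinite places the orthogonal complement of $E_v\times\{0\}$ for the product Hermitian form is $\{0\}\times F_v$ carrying the Hermitian form of $F$, and the underlying $\mathcal{O}_K$-module of the quotient is the one defining $F$ — so that item \ref{ena} applied to $E\times\{0\}\subset E\times F$ yields $\min(H_{min}(E),H_{min}(F))\le H_{min}(E\times F)$, and the claimed equality follows.

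The step I expect to be the main obstacle is the comparison, in item \ref{ena}, between the quotient structure on $G/(G\cap F)$ and the structure induced on its image $\pi(G)\subset E/F$: one must check that the canonical identification is norm-nonincreasing, and in the right direction, at \emph{every} place — Archimedean and finite alike — since it is precisely this that converts the additivity of the degree into the stated inequality between minimal heights. The remaining manipulations are routine bookkeeping with the definition \eqref{hmin} of $H_{min}$.
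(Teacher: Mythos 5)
Your proof is correct, and for part (2) it follows the paper exactly: the paper deduces it from part \ref{ena} via the two exact sequences $0 \to E \to E\times F \to F \to 0$ and $0 \to F \to E\times F \to E \to 0$, which is precisely your combination of the isometric coordinate embeddings with the isometry $(E\times F)/(E\times\{0\})\simeq F$. The only divergence is in part \ref{ena}: the paper gives no argument there, simply citing \cite[Lemma 1.9]{CN3}, whereas you reconstruct the proof — take the destabilizing subspace $G$ of $E$, use multiplicativity of $H$ in the exact sequence $0\to G\cap F\to G\to G/(G\cap F)\to 0$ to bound $\mu(G)$ by the maximum of the slopes of the two pieces, and compare $G/(G\cap F)$ with its image $\pi(G)\subset E/F$ through the canonical norm-nonincreasing isomorphism. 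This is the standard Stuhler/Harder--Narasimhan-type argument underlying the cited lemma, and you correctly isolate and justify the one delicate step (that the comparison map does not increase norms at any place, since the infima at Archimedean places are taken over the larger space $F_v\supseteq (G\cap F)_v$ and the image lattice at finite places is contained in the lattice inducing the structure on $\pi(G)$), as well as the degenerate cases $G\subseteq F$ and $G\cap F=\{0\}$. The two inputs you assume — multiplicativity $H(E)=H(F)\,H(E/F)$ for induced and quotient structures, and the fact that a $K$-linear isomorphism which is norm-nonincreasing at every place does not increase (reduced) heights — are standard facts for rigid adelic spaces (see \cite{MR3641657}, \cite{gaudron2018}), so nothing essential is missing; your write-up just makes explicit what the paper delegates to the reference.
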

\begin{proof} A proof of the first assertion can be found in \cite[Lemma 1.9]{CN3}\label{first}. The second assertion is a direct consequence of the first one, using the exact sequences 
\[ 0 \rightarrow E  \rightarrow E \times F \rightarrow F \rightarrow 0 \text{ and } 0 \rightarrow F \rightarrow E \times F \rightarrow E \rightarrow 0. \]
\end{proof}

Regarding Conjecture \ref{bc}, the first assertion of the previous lemma has the following consequence :
\begin{corollary}\label{c1}
	Let $E$ and $F$ be rigid analytic spaces. Suppose that $F$ admits a filtration 
	\[ \left\lbrace 0 \right\rbrace=F_0 \subset F_1 \subset \dots \subset F_{t-1} \subset F_t=F \] such that \begin{enumerate}[label=(\roman*)]
		\item $H_{min}(E\otimes F_i/F_{i-1} )=H_{min}(E) H_{min}(F_i/F_{i-1}) \ \text{ for all } 1 \leq i \leq t$,
		\item $H_{min}(F_i/F_{i-1}) \leq H_{min}(F_{i+1}/F_i) \ \text{ for all } 1 \leq i \leq t-1$.
	\end{enumerate}
	Then \[ H_{min}(E\otimes F)=H_{min}(E)H_{min}(F). \]
\end{corollary}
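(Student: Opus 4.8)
The plan is to derive everything from the first assertion of Lemma \ref{ds}, via an elementary observation about filtrations which I would isolate first. Assume, as we may, that all the inclusions $F_{i-1}\subsetneq F_i$ are strict.

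\emph{Sublemma.} If $G$ is a rigid analytic space with a filtration $\{0\}=G_0\subsetneq G_1\subsetneq\dots\subsetneq G_s=G$, then $H_{min}(G)\ge\min_{1\le i\le s}H_{min}(G_i/G_{i-1})$; and if moreover $H_{min}(G_i/G_{i-1})\le H_{min}(G_{i+1}/G_i)$ for $1\le i\le s-1$, then $H_{min}(G)=H_{min}(G_1)$. I would prove the first inequality by descending induction on $i$ for the statement $H_{min}(G/G_{i-1})\ge\min_{i\le k\le s}H_{min}(G_k/G_{k-1})$, the inductive step being the lower bound in Lemma \ref{ds}(\ref{ena}) applied to $G_i/G_{i-1}\subset G/G_{i-1}$ together with the canonical isomorphism $(G/G_{i-1})/(G_i/G_{i-1})\simeq G/G_i$; the case $i=1$ is the claim. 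Under the monotonicity hypothesis the minimum equals $H_{min}(G_1)$, while $H_{min}(G)\le H_{min}(G_1)$ is the upper bound in Lemma \ref{ds}(\ref{ena}), so the two bounds coincide.

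Now I would apply the sublemma twice. Applied to the filtration $(F_i)$ of $F$, hypothesis (ii) gives $H_{min}(F)=H_{min}(F_1)$. Next, tensoring with $E$ is exact and compatible with the adelic structures, so $E\otimes F_\bullet$ is a filtration of $E\otimes F$ with graded pieces $(E\otimes F_i)/(E\otimes F_{i-1})\simeq E\otimes(F_i/F_{i-1})$; by (i) their reduced minimal heights are $H_{min}(E)\,H_{min}(F_i/F_{i-1})$, which form a non-decreasing sequence by (ii). Hence the sublemma applied to $(E\otimes F_i)$ gives $H_{min}(E\otimes F)=H_{min}(E\otimes F_1)$. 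Finally (i) with $i=1$ yields $H_{min}(E\otimes F_1)=H_{min}(E)\,H_{min}(F_1)=H_{min}(E)\,H_{min}(F)$, which is the assertion.

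The only point requiring more than bookkeeping is the compatibility of $\otimes$ with subobjects and quotients of rigid adelic spaces — that the structure induced by $E\otimes F$ on $E\otimes F_1$ is the tensor product structure, and that $(E\otimes F)/(E\otimes F_1)\simeq E\otimes(F/F_1)$ as rigid adelic spaces. This is standard: at an infinite place it reduces to the restriction of a tensor-product Hermitian metric to $E_v\otimes(F_1)_v$ being the tensor product of the restricted metrics, and to $(E_v\otimes(F_1)_v)^{\perp}=E_v\otimes(F_1)_v^{\perp}$ inside $E_v\otimes F_v$; at a finite place it is the analogous statement for $\mathcal{O}_K$-lattices. I would simply quote it from \cite{gaudron2018} or \cite{MR3035951}. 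Everything else is formal manipulation of Lemma \ref{ds}.
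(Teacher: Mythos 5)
Your argument is correct and is essentially the paper's own proof, which is simply a recursion on $t$ using the first assertion of Lemma \ref{ds}; your sublemma (with the identification $(E\otimes F_i)/(E\otimes F_{i-1})\simeq E\otimes(F_i/F_{i-1})$, implicitly needed in the paper as well) is just that recursion written out in detail.
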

\begin{proof}
	Recursion on $t$, using the first assertion of Lemma\ref{ds}.
\end{proof}
\begin{remark}\label{red}
One noticeable consequence of the above Corollary is that Bost's conjecture \ref{bc} is equivalent to the -- apparently weaker -- statement that the tensor product of \emph{semistable} rigid analytic spaces is itself \emph{semistable} (see \cite[p. 440]{MR3035951}, and \cite{CN3} for a discussion of this point).
\end{remark}\smallskip
 Another easy consequence of Lemma \ref{ds}	and its corollary is the following :
 \begin{corollary}\label{r2}
 	Let $F$ be a rigid analytic space of rank $2$ over a number field $K$, which is not stable. Then for any rigid analytic space $E$ over $K$, one has $H_{min}(E\otimes F)=H_{min}(E)H_{min}(F)$.
 \end{corollary}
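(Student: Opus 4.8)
The plan is to produce a two-step filtration of $F$ to which Corollary~\ref{c1} applies. Since $F$ has rank $2$ and is not stable, there is a nonzero proper subspace $F_1 \subsetneq F$, necessarily of rank $1$, with $H_r(F_1) \le H_r(F)$. I would first observe that the complementary quotient then satisfies $H_r(F_1) \le H_r(F/F_1)$: applying additivity of the degree to the exact sequence $0 \to F_1 \to F \to F/F_1 \to 0$ gives $\deg F_1 + \deg(F/F_1) = \deg F$, so dividing by the relevant ranks $\mu(F/F_1) = 2\mu(F) - \mu(F_1) \le \mu(F) \le \mu(F_1)$, and passing back through $H_r = e^{-\mu}$ turns this into $H_r(F/F_1) \ge H_r(F_1)$. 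As $F_1$ and $F/F_1$ have rank $1$, they are trivially semistable and $H_{min}(F_i/F_{i-1}) = H_r(F_i/F_{i-1})$; in particular hypothesis (ii) of Corollary~\ref{c1} is satisfied for the flag $\left\lbrace 0 \right\rbrace \subset F_1 \subset F$.

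It then remains to check hypothesis (i), which is the only computational input: for any rigid analytic space $E$ and any rank-$1$ space $L$ (here $L = F_1$ or $L = F/F_1$) one has $H_{min}(E \otimes L) = H_{min}(E)\,H_r(L) = H_{min}(E)\,H_{min}(L)$. I would justify this from two facts. First, tensoring by a rank-$1$ space is an equivalence on the lattice of subspaces: every subspace of $E \otimes L$ is of the form $G \otimes L$ for a subspace $G \subset E$, equipped with the adelic structure induced from $E \otimes L$, because tensoring by an invertible module is exact and so commutes with intersecting lattices at finite places and with restricting Hermitian forms at infinite places. Second, $H_r(G \otimes L) = H_r(G)\,H_r(L)$, which is the translation via $\deg = -\log H$ of the additivity $\deg(G \otimes L) = \deg G + (\rank G)\deg L$ for a metrized line bundle $L$. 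Taking the infimum over $G \subset E$ yields the displayed identity, and since $H_{min}(L) = H_r(L)$ for rank-$1$ $L$ this is exactly condition (i).

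With both hypotheses of Corollary~\ref{c1} verified for $\left\lbrace 0 \right\rbrace \subset F_1 \subset F$, that corollary gives $H_{min}(E\otimes F) = H_{min}(E)\,H_{min}(F)$, as claimed.

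There is no real obstacle here; the one point to keep in mind is that "not stable" silently covers two situations — $F$ genuinely unstable, where $\left\lbrace 0 \right\rbrace \subset F_1 \subset F$ is in fact the GS-filtration and $H_r(F_1) < H_r(F/F_1)$ is strict, and $F$ semistable but not stable, where $H_r(F_1) = H_r(F) = H_r(F/F_1)$ — but in both a single rank-$1$ subspace with $H_r(F_1) \le H_r(F/F_1)$ is all that the argument uses, so the cases need not be separated.
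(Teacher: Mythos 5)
Your proposal is correct, but it follows a partly different route from the paper's. The paper argues in two cases: when $F$ is unstable, the destabilizing subspace $F_1$ and the quotient $F/F_1$ are both lines, so the GS-filtration itself satisfies the hypotheses of Corollary \ref{c1} (condition (ii) being automatic for the GS-filtration) and the conclusion follows; when $F$ is semistable but not stable, the paper instead perturbs the Archimedean metrics so as to make $F$ unstable and passes to the limit, asserting that the equality $H_{min}(E\otimes F)=H_{min}(E)H_{min}(F)$ is preserved under limits. You avoid this density argument entirely by observing that Corollary \ref{c1} does not require the flag to be the GS-filtration: any rank-one subspace $F_1$ with $H_r(F_1)\leq H_r(F)$ — which exists precisely because $F$ is not stable — satisfies $H_r(F_1)\leq H_r(F/F_1)$ by additivity of the degree in the exact sequence $0\to F_1\to F\to F/F_1\to 0$, so the unstable and semistable-not-stable cases are treated uniformly. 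This is a genuine (and welcome) simplification: the paper's limiting step tacitly needs a continuity/semicontinuity property of $H_{min}$ under metric perturbations which it does not spell out, whereas your argument only invokes the multiplicativity of the height in short exact sequences, a standard fact of the theory even though the paper never states it explicitly. Your explicit verification of condition (i) — that the subspaces of $E\otimes L$ for a line $L$ are exactly the $G\otimes L$ with induced structure, and that $H_r(G\otimes L)=H_r(G)H_r(L)$ — is also correct; the paper simply takes this rank-one multiplicativity for granted with the phrase ``since $F_1$ and $F/F_1$ are one-dimensional''.
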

\begin{proof}
If $F$ is unstable, its destabilizing subspace $F_1$ is one-dimensional, as well as $F/F_1$. Consequently, \[ H_{min}(E\otimes F_1)=H_{min}(E)H_{min}(F_1)\text{ and  }H_{min}(E\otimes F/F_1)=H_{min}(E)H_{min}(F/F_1), \] and one can apply Corollary \ref{c1} to conclude that $H_{min}(E \otimes F)=H_{min}(E)H_{min}(F)$.
 If $F$ is unstable but semistable,  one can "destabilize" it by an arbitrary small perturbation of the infinite components of the metric, in which case the previous argument applies, whence the conclusion since the equality $H_{min}(E \otimes F)=H_{min}(E)H_{min}(F)$ is preserved under taking limits.
\end{proof}

\section{Isodual rigid adelic spaces}\label{S22}


Recall that we have defined an \emph{isometry} between two rigid adelic spaces $E$ and $F$ over $K$ as a $K$-linear map $\sigma : E \rightarrow F$ such that the localised maps $\sigma_v : E\otimes K_v \rightarrow F\otimes K_v$ preserve the local norms for all $v \in V$.

If $L$ and $M$ are the $\mathcal{O}_K$-lattices underpinning $E$ and $F$, it is easy to see, due to the description of the local norms at finite places, that an isometry $\sigma$ is simply a $K$-linear isomorphism mapping $L$ onto $M$ and inducing Hermitian isometries at all infinite places.\medskip

More generally, one can define the notion of \emph{similarity} as follows :
\begin{defn}
	A \emph{similarity} between two rigid adelic spaces $E$ and $F$ over $K$ is a $K$-linear map  $\sigma : E \rightarrow F$ such that the maps $\sigma_v$ ($v \in V$) are similarities with respect to the local norms, and the similarity ratio is $1$ at all but finitely many places. 
\end{defn}
Again, if $L$ and $M$ are the $\mathcal{O}_K$-lattices underpinning $E$ and $F$, one easily checks that a $K$-linear map  $\sigma : E \rightarrow F$ is a similarity if $\sigma_v$ is a Hermitian similarity for every Archimedean place $v$, and if there exists a fractional ideal $\mathfrak{a}$ such that $\sigma(L)=\mathfrak{a}M$ .

 \medskip
 
This notion of similarity is relevant to our purpose, since it preserves the Grayson-Stuhler filtration of a rigid adelic space, as shows the following lemma :

\begin{lemma}\label{gssd}
	Let $E$ be a rigid adelic space, with GS-filtration
	\begin{equation*}\left\lbrace 0 \right\rbrace=E_0 \subset E_1 \subset \dots \subset E_{\ell-1} \subset E_{\ell}=E.\end{equation*}
	and $\sigma$ a similarity. Then the GS-filtration of $\sigma E$ is
	\begin{equation*}
	\left\lbrace 0 \right\rbrace=\sigma E_0 \subset \sigma E_1 \subset \dots \subset \sigma E_{\ell-1} \subset \sigma E_{\ell}=\sigma E.
	\end{equation*}
\end{lemma}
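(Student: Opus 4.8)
The plan is to deduce the lemma from the uniqueness characterization of the GS-filtration in Proposition~\ref{gsc}. The whole point is the elementary fact that a similarity multiplies the reduced height of \emph{every} subspace by one and the same positive constant, so that it preserves semistability and the ordering of slopes.

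First I would establish this fact: for a similarity $\sigma : E \to \sigma E$ there is a constant $c_\sigma > 0$, depending only on $\sigma$, such that $H_r(\sigma F) = c_\sigma H_r(F)$ for every subspace $F \subseteq E$. Let $L$ and $M$ be the $\mathcal{O}_K$-lattices underlying $E$ and $\sigma E$. By the description of similarities in terms of Hermitian bundles there is a fractional ideal $\mathfrak{a}$ with $\sigma(L) = \mathfrak{a}M$, and at each infinite place $v$ a ratio $\lambda_v > 0$ with $h'_v(\sigma_v x,\sigma_v y) = \lambda_v^2 h_v(x,y)$, where $h_v$ and $h'_v$ denote the Hermitian forms of $E$ and $\sigma E$. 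For a subspace $F$ one has $\sigma(L\cap F) = \mathfrak{a}(M\cap\sigma F)$, so if $(\mathfrak{a}_i,b_i)_{1\le i\le k}$ is a pseudo-basis of $L\cap F$ then $(\mathfrak{a}^{-1}\mathfrak{a}_i,\sigma(b_i))_{1\le i\le k}$ is a pseudo-basis of $M\cap\sigma F$. Feeding this into the height formula~\eqref{height}, together with $\det\big(h'_v(\sigma b_i,\sigma b_j)\big) = \lambda_v^{2k}\det\big(h_v(b_i,b_j)\big)$, gives $H(\sigma F) = c_\sigma^{\,k}H(F)$ with $c_\sigma = \N(\mathfrak{a})^{-1/[K:\Q]}\prod_{v\in V_\infty}\lambda_v^{e_v/[K:\Q]}$. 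Since $\mathfrak{a}$ and the $\lambda_v$ belong to $\sigma$ and not to $F$, the constant is the same for all $F$; taking $k$-th roots yields $H_r(\sigma F)=c_\sigma H_r(F)$.

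Next I would observe that $\sigma$ induces similarities on subquotients, all carrying the same constant $c_\sigma$. Indeed, for $F\subseteq E$ the restriction $\sigma|_F : F\to\sigma F$ is a similarity (same $\lambda_v$, same ideal $\mathfrak{a}$ via $\sigma(L\cap F)=\mathfrak{a}(M\cap\sigma F)$), and the induced map $E/F\to\sigma E/\sigma F$ is a similarity as well, since the quotient norms at infinite places scale by $\lambda_v$ and $\sigma\big(L/(L\cap F)\big)=\mathfrak{a}\big(M/(M\cap\sigma F)\big)$ on the finite part. Applying this to the GS-filtration, each induced map $E_i/E_{i-1}\to\sigma E_i/\sigma E_{i-1}$ is a similarity with constant $c_\sigma$. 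Hence $\sigma$ gives a bijection between the subspaces of $E_i/E_{i-1}$ and those of $\sigma E_i/\sigma E_{i-1}$ multiplying all reduced heights by the fixed positive $c_\sigma$, so $\sigma E_i/\sigma E_{i-1}$ is semistable if and only if $E_i/E_{i-1}$ is, and $H_r(\sigma E_i/\sigma E_{i-1}) = c_\sigma H_r(E_i/E_{i-1})$ for every $i$. The strict inequalities $H_r(E_i/E_{i-1})<H_r(E_{i+1}/E_i)$ therefore transfer verbatim, and the flag $\{0\}=\sigma E_0\subset\dots\subset\sigma E_\ell=\sigma E$ meets both conditions of Proposition~\ref{gsc}; by the uniqueness stated there it is the GS-filtration of $\sigma E$.

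There is no deep difficulty here; the only point demanding care is the first step — verifying that the scaling factor is genuinely independent of the subspace — which requires unwinding the definition of similarity at complex places (where the ratio enters the Hermitian form quadratically) and at finite places (where $\sigma(L)=\mathfrak{a}M$ must be read as a uniform multiplicative effect on sublattice covolumes). The rest is bookkeeping. One could equally bypass Proposition~\ref{gsc} and argue straight from the recursive definition of the GS-filtration: the uniform scaling immediately forces $\sigma$ to carry the destabilizing subspace of $E$ onto that of $\sigma E$, and one then induces on the quotient $E/E_1$.
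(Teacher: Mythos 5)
Your proof is correct and follows essentially the same route as the paper's: the whole content is that a similarity rescales the reduced height of every subspace (and subquotient) by one constant independent of the subspace and its dimension, after which the GS-filtration is transported by uniqueness. You merely spell out what the paper's one-line proof leaves implicit, namely the computation of the constant from the height formula and the fact that the induced maps on the quotients $E_i/E_{i-1}$ are again similarities with the same ratio, before invoking Proposition~\ref{gsc}.
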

\begin{proof}
	Let $\lambda =(\lambda_v)_{v \in V}$ where $\lambda_v$ is the similarity ratio of  $\sigma_v$ \ie $\Vert \sigma_v(x)\Vert_v =\lambda_v \Vert x \Vert_v$ for all $x \in E_v=E\otimes_K K_v$, and set $N(\lambda)=\prod_{v \in V} \lambda_v$. Then  
	\[ H_r(\sigma(F))=N(\lambda)^{1/[K:\mathbb{Q}]}H_r(F) \]
	for every subspace $F$ of $E$. 
	The conclusion follows, as the scaling factor 
	)$N(\lambda)^{1/[K:\mathbb{Q}]}$ is independent of $F$ and its dimension.
\end{proof}

\begin{defn}\label{isod}
	A rigid analytic space $E$  is $\sigma$\emph{-isodual}, or simply \emph{isodual}, if there exists a similarity $\sigma : E \rightarrow E^{\vee}$. To such a similarity, one associates a $K$-bilinear form $b_{\sigma} : E\times E \rightarrow K$ defined by
	\begin{equation}\label{bsig}
	\forall (x,y)	\in E\times E\ , \ \ b_{\sigma} (x,y)=\sigma(x)(y).
	\end{equation}
\end{defn}
If $K$ is a $CM$-field, with complex conjugation $\bar{\phantom{u}}$, it can be more natural to consider instead the \emph{conjugate} dual space $\overline{E^{\vee}}$, which is the ordinary dual $E^{\vee}$ (set of $K$-linear forms) equipped with the twisted external law 
\[  \alpha \star \varphi := \overline{\alpha} \varphi, \ \ \left( \alpha  \in K, \varphi \in E^{\vee} \right). \]
 It is consistent in this case to consider spaces admitting a similarity onto their \emph{conjugate} dual instead. This leads to the following extension of definition \ref{isod} :
\begin{defn}
		A rigid analytic space $E$  over a $CM$-field $K$ is \emph{anti-}isodual, if there exists a similarity $\sigma : E \rightarrow \overline{E^{\vee}}$. 
\end{defn}

Note that in this situation, equation \eqref{bsig} defines a \emph{sesquilinear} form $b_{\sigma}$ on $E$.

\begin{defn}\label{typ}
Let $(E,\sigma)$ be an isodual rigid adelic space over a number field $K$, or an anti isodual space over a $CM$-field $K$.
We say that $E$ is 
\begin{enumerate}
	\item \emph{orthogonal} if the bilinear form $b_{\sigma}$ is symmetric,
	\item \emph{symplectic} if $b_{\sigma}$ is alternate.
	\item \emph{unitary} if $K$ is a CM-field, $E$ is anti isodual  and $b_{\sigma}$ is Hermitian.
\end{enumerate}
\end{defn}
Note that in all three cases, $b_{\sigma}$ is necessarily non-degenerate.
\begin{remark}\label{latt}
The previous definitions extend naturally the notion of \emph{isodual lattice} mentioned in the introduction, which we now recall in a slightly greater generality : suppose that $K$ is either a totally real or a CM-field, and $E$ a $K$-vector space endowed with a \emph{totally positive} definite quadratic (resp. Hermitian) form $h$, \ie $h$ is a $K$-valued quadratic or Hermitian form on $E$ such that the extensions $h_v$ of $h$ to all completions $E_v$ at infinite places are positive definite. With $\bar{\phantom{u}}$ denoting either the complex conjugation if $K$ is a $CM$-field, or the identity if $K$ is totally real, we get an isomorphism $H$ between $E$ and $\overline{E^{\vee}}$ ($=E^{\vee}$ when $K$ is totally real)  given by
\begin{equation*}
	H \colon\begin{aligned}[t]
		E &\longrightarrow \overline{E^{\vee}}\\
		x&\longmapsto h(x,\cdot)
	\end{aligned}
\end{equation*}
Any \emph{lattice} $L$ in $E$ (=full-rank finitely generated projective  ${\mathcal O}_K$-submodule of $E$)  induces a structure of rigid adelic space on $E$,
the infinite part consisting of the extensions $h_v$ of $h$ to the completions $E_v$, for $v \in V_{\infty}$. 
The preimage of $L^{\vee}=\Hom(L,\mathcal{O}_K)$ by $H$ is 
\begin{equation}\label{dualbis}
	L^{*}:= \left\lbrace y \in E \mid h\left(L,y\right) \subset {\mathcal O}_K \right\rbrace. 
\end{equation}
We say that the lattice $L$ is \emph{isodual} if there exists 
a similarity $\tau$ of the Hermitian space $(E,h)$ mapping $L$ onto $L^{*}$, 
which means that there exists $\alpha \in K$ such that \[ \forall (x,y) \in E \times E,  \ h(\tau(x),\tau(y))=\alpha h(x,y).  \]
If so, the map $\sigma=H\circ \tau$ is a similarity of rigid analytic space between $E$ and $\overline{E^{\vee}}$
and the form $b_{\sigma}$ of \eqref{bsig} is given by \[  b_{\sigma}(x,y)=h(\tau(x),y). \] 
The ratio  $\alpha$ of $\tau$ is a totally positive element in $K^{+}$, the maximal totally real subfield of $K$. 
It is then easily checked that, as a $H\circ \tau$-isodual (resp. \emph{anti} isodual) rigid analytic space, $E$ is orthogonal (resp. unitary) if and only if $\tau^2=\alpha Id$ and symplectic if and only if $\tau^2=-\alpha Id$.

\end{remark}\medskip
The last remark highlights an important class of  rigid adelic spaces, stemming from lattices in quadratic (resp. Hermitian) spaces over a totally real (resp. CM)  number field. This motivates the following definition.

\begin{defn}\label{rat}
A rigid adelic space $E$ over a number field $K$ is $K$-rational if $K$ is either a totally real or a $CM$ extension of $\mathbf{Q}$, and the symmetric (resp. Hermitian) forms $h_v$ at Archimedean places come from a $K$-valued symmetric (resp. Hermitian) form $h$ on $E$ by localization. 
\end{defn}

\begin{remark}
	Every $K$-rational rigid adelic space $E$ of dimension $2$ over a totally real or a $CM$ extension of $\mathbf{Q}$ is isodual (resp anti-isodual). Indeed, if $E$ is endowed with a totally positive definite quadratic (resp. Hermitian) form $h$ over $K$ which defines the local metrics at Archimedean places by localization, while the metrics at finite places are determined by the data of an $\mathcal{O}_K$-lattice $L=\mathfrak{a} e_1 \oplus \mathcal{O}_K e_2$ in $E$,  then  the Gram matrix of $h$ in the basis $(e_1,e_2)$ has the following shape :
	\[ \begin{pmatrix}
		a & c \\
		c & b
	\end{pmatrix} \text{ resp. } \begin{pmatrix}
		a & c \\
		\overline{c} & b
	\end{pmatrix}.  \]
The Gram matrix of $h$ in the dual basis $(e^{*}_1,e^{*}_2)$, defined by the condition that $h(e^{*}_i,e_j)=\delta_{i,j}$,  is thus given by
	
\[ 	\dfrac{1}{ab-c^2}\begin{pmatrix}
		b & -c \\
		-c & a
	\end{pmatrix} \text{ resp. } \dfrac{1}{ab-\vert c\vert^2}\begin{pmatrix}
		b & -c \\
		-\overline{c} &a
	\end{pmatrix}, \] 
and the map $x_1 e_1 +x_2 e_2 \mapsto x_1 e^{*}_2-x_2  e^{*}_1$ defines a $K$-linear similarity $\tau$ of the Hermitian space $(F, h)$, which maps $L$ onto $\mathfrak{a}L^{*}$. Thus, $(L,h)$ is (anti-)isodual as a rigid analytic space.
\end{remark}\medskip

In connection with conjecture \ref{bc}, it must be noted that the tensor product of two isodual rigid adelic spaces $(E,\sigma)$ and $(F,\tau)$ is itself isodual, the tensor product $\sigma\otimes \tau$ providing a similarity from $E\otimes F$ onto its dual (the same observation holds for anti-isodual spaces). Moreover, the bilinear (or sesquilinear) form \eqref{bsig} satisfies the relation \begin{equation}\label{bst}
	b_{\sigma\otimes \tau} = b_{\sigma}\otimes b_{\tau}.
\end{equation}

If $E$ is any rigid adelic space, the direct product $E \times E^{\vee}$ is both an orthogonal and symplectic isodual rigid adelic space. Indeed, the maps 
\begin{equation*}
\sigma\colon\begin{aligned}[t]
E \times E^{\vee}&\longrightarrow E ^{\vee}\times E \\
(x ,x^{\vee})&\longmapsto (x^{\vee},x).
\end{aligned}
\end{equation*} 
and
\begin{equation*}
\sigma'\colon\begin{aligned}[t]
E \times E^{\vee}&\longrightarrow E ^{\vee}\times E \\
(x ,x^{\vee})&\longmapsto (-x^{\vee},x).
\end{aligned}
\end{equation*} 
are both isometries from $E \times E^{\vee}$ onto its dual, the former being orthogonal, and the latter symplectic. 

Similarly, if $E$ a rigid analytic space over a $CM$ field, the map
\begin{equation*}
	\sigma"\colon\begin{aligned}[t]
		E \times \overline{E^{\vee}}&\longrightarrow \overline{E ^{\vee}}\times E \\
		(x ,x^{\vee})&\longmapsto (x^{\vee},x).
	\end{aligned}
\end{equation*} 
yields an isometry of $E \times \overline{E^{\vee}}$ onto its conjugate dual $\overline{E ^{\vee}}\times E$.

These observations leads to the following proposition :
\begin{proposition}\label{redi}
	Let $F$ be a rigid adelic space ever a number field $K$. The following are equivalent :
	\begin{enumerate}
		\item\label{trois} For all rigid adelic space $E$, one has $H_{min} (E \otimes F)=	H_{min}(E)	H_{min}(F)$.
			\item\label{quatre} For all \emph{isodual} rigid adelic space $E$, one has $H_{min} (E \otimes F)=	H_{min}(E)	H_{min}(F)$.
		\item\label{uno} For all \emph{orthogonal} rigid adelic space $E$, one has $H_{min} (E \otimes F)=	H_{min}(E)	H_{min}(F)$.
		\item\label{deux} For all \emph{symplectic} rigid adelic space $E$, one has $H_{min} (E \otimes F)=	H_{min}(E)	H_{min}(F)$.
\end{enumerate}
If $K$ is a $CM$-field, this is also equivalent to :
\begin{enumerate}[resume]
	\item\label{cinq} For all \emph{unitary} rigid adelic space $E$, one has $H_{min} (E \otimes F)=	H_{min}(E)	H_{min}(F)$.
\end{enumerate}
\end{proposition}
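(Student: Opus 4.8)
The plan is first to clear the trivial implications and then to reduce the three remaining ones to a single limiting argument. Clearly $\eqref{trois}\Rightarrow\eqref{quatre}$, $\eqref{quatre}\Rightarrow\eqref{uno}$, $\eqref{trois}\Rightarrow\eqref{deux}$ and, when $K$ is a $CM$-field, $\eqref{trois}\Rightarrow\eqref{cinq}$, since an orthogonal (resp.\ symplectic, resp.\ unitary) space is in particular isodual, hence a rigid adelic space. So the whole chain of equivalences will follow once we establish the three "substantial" implications $\eqref{uno}\Rightarrow\eqref{trois}$, $\eqref{deux}\Rightarrow\eqref{trois}$ and $\eqref{cinq}\Rightarrow\eqref{trois}$, all three going through the same device, which I spell out for $\eqref{uno}\Rightarrow\eqref{trois}$.

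Assume \eqref{uno} and let $E$ be an arbitrary rigid adelic space over $K$. For every $t>0$, applying the isometry $\sigma$ displayed just above the proposition to the space $E[t]$ exhibits $E[t]\times E[t]^{\vee}=E[t]\times E^{\vee}[t^{-1}]$ (we use \eqref{scaledual}) as an \emph{orthogonal} isodual rigid adelic space. Feeding this space into hypothesis \eqref{uno} gives
\begin{equation*}
H_{min}\!\left(\bigl(E[t]\times E^{\vee}[t^{-1}]\bigr)\otimes F\right)=H_{min}\!\left(E[t]\times E^{\vee}[t^{-1}]\right)\,H_{min}(F).
\end{equation*}
I would then rewrite both sides using three elementary compatibilities, each checked place by place from the definitions: tensor product distributes over direct product, $(A\times B)\otimes F\simeq(A\otimes F)\times(B\otimes F)$ as rigid adelic spaces; scaling at the Archimedean places commutes with tensoring, $A[t]\otimes F=(A\otimes F)[t]$; and $H_{min}$ of a direct product is the minimum of the two factors' $H_{min}$'s (Lemma~\ref{ds}), together with $H_{min}(A[t])=t\,H_{min}(A)$ from \eqref{hmt}. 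These identities turn the equality above into
\begin{equation*}
\min\!\bigl(t\,H_{min}(E\otimes F),\,t^{-1}H_{min}(E^{\vee}\otimes F)\bigr)=\min\!\bigl(t\,H_{min}(E),\,t^{-1}H_{min}(E^{\vee})\bigr)\,H_{min}(F),
\end{equation*}
valid for every $t>0$.

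Finally I let $t\to 0^{+}$: on each side the first entry of the minimum tends to $0$ and the second to $+\infty$, so for all sufficiently small $t$ both minima are attained at their first entry; cancelling the common factor $t$ gives $H_{min}(E\otimes F)=H_{min}(E)\,H_{min}(F)$, which is \eqref{trois} for this $E$. (Letting instead $t\to+\infty$ yields the same equality with $E$ replaced by $E^{\vee}$, which is consistent and needs no separate treatment.) The implication $\eqref{deux}\Rightarrow\eqref{trois}$ is obtained verbatim with the \emph{symplectic} isometry $\sigma'$ in place of $\sigma$; and, when $K$ is a $CM$-field, $\eqref{cinq}\Rightarrow\eqref{trois}$ is obtained the same way from the \emph{unitary} space $E[t]\times\overline{E[t]^{\vee}}$ and the isometry $\sigma''$ (whose associated sesquilinear form is readily checked to be Hermitian), once one notes that passing from the dual to the conjugate dual affects neither the $H_{min}$'s nor the tensor products with $F$ that occur.

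The point requiring care is bookkeeping rather than a genuine obstacle: one must confirm the three compatibilities invoked in the middle step directly from the adelic definitions — distributivity of $\otimes$ over $\times$ at every place, linear scaling of the tensor norm at Archimedean places, and the behaviour of duality under scaling already recorded in \eqref{scaledual} — and, in the unitary case, that the conjugate-dual twist is invisible to $H_{min}(\cdot)$ and to $\cdot\otimes F$. Granting these, the argument is a one-line limiting computation applied to the model spaces $E[t]\times E^{\vee}[t^{-1}]$ (orthogonal), the same with $\sigma'$ (symplectic), and $E[t]\times\overline{E^{\vee}[t^{-1}]}$ (unitary).
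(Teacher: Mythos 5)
Your proof is correct and follows essentially the same route as the paper: feed the orthogonal (resp.\ symplectic, unitary) model space $E[t]\times E[t]^{\vee}$ into the hypothesis and unravel it with Lemma~\ref{ds}, the scaling relations \eqref{hmt}--\eqref{scaledual}, and distributivity of $\otimes$ over $\times$. The only (harmless) difference is that the paper fixes the balancing value $t=\bigl(H_{min}(E^{\vee})/H_{min}(E)\bigr)^{1/2}$ and then invokes the always-valid inequality $H_{min}(E\otimes F)\leq H_{min}(E)H_{min}(F)$, whereas you keep $t$ free and let $t\to 0^{+}$, which yields the equality directly.
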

\begin{proof}
	We only have to prove that \ref{uno} $\Rightarrow$ \ref{trois} and \ref{deux} $\Rightarrow$ \ref{trois} as well as \ref{cinq} $\Rightarrow$ \ref{trois}, when $K$ is a $CM$ field.
	Let us check the first implication (the other ones are similar) : assume that $H_{min} (E' \otimes F)=	H_{min}(E')	H_{min}(F)$ holds for all orthogonal rigid adelic space $E'$, and let $E$ be an arbitrary rigid adelic space. We choose $t>0$ such that $H_{min}(E[t]) =H_{min}(E[t]^{\vee})$, that is, thanks to \eqref{hmt},\[  t=\left( \dfrac{H_{min}(E^{\vee})}{H_{min}(E)} \right)^{1/2}. \]
	Since $E[t] \times E[t]^{\vee}$ is orthogonal we have 
	\begin{equation} \label{ein}
		\begin{aligned}
			H_{min} \left( (E[t] \times E[t]^{\vee})\otimes F \right)&=H_{min} (E[t] \times E[t]^{\vee}) H_{min} (F)\\&=H_{min} (E[t]) H_{min} (F) \text{ from lemma \ref{ds}, since }H_{min}(E[t]) =H_{min}(E[t]^{\vee})\\&=tH_{min} (E) H_{min} (F). 
		\end{aligned}
	\end{equation}
On the other hand
\begin{equation} \label{zwei}
	\begin{aligned}
		H_{min} \left( (E[t] \times E[t]^{\vee})\otimes F \right)&=H_{min} (E[t]\otimes F \times E[t]^{\vee}\otimes F )\\&=\min \left( H_{min} (E[t]\otimes F),H_{min} (E[t]^{\vee}\otimes F) \right)  \text{ from lemma \ref{ds}}\\&\leq  H_{min} (E[t]\otimes F)=tH_{min} (E\otimes F).
	\end{aligned}
\end{equation}
Comparing \eqref{ein} and \eqref{zwei} yields $H_{min}(E) H_{min}(F) \leq H_{min}(E\otimes F)$, whence equality, since the reverse inequality is always satisfied.
\end{proof}

As a consequence, in order to prove conjecture \ref{bc}, one can restrict to isodual (resp. orthogonal, resp. symplectic) rigid adelic spaces. In the next sections, we investigate some peculiarities of isodual rigid adelic spaces regarding  stability and tensor multiplicativity.

\section{The Grayson-Stuhler filtration of isodual rigid adelic spaces}\label{SS4}

The GS-filtration of an isodual adelic space has remarkable symmetry properties which rely on the following lemma :
 \begin{lemma}\label{l1}
 	Let $E$ an rigid adelic space with GS-filtration\begin{equation*}\left\lbrace 0 \right\rbrace=E_0 \subset E_1 \subset \dots \subset E_{\ell-1} \subset E_{\ell}=E.\end{equation*}
 	Then the GS-filtration of $E^{\vee}$ is :
 	\[ \left\lbrace 0 \right\rbrace=\left( E_{\ell} \right)^{\perp} \subset \left( E_{\ell-1} \right)^{\perp} \subset \dots \subset \left( E_1 \right)^{\perp} \subset \left( E_0 \right)^{\perp}=E^{\vee}. \]
 \end{lemma}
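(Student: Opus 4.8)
The plan is to invoke the uniqueness statement in Proposition \ref{gsc}: one only has to check that the proposed flag of $E^{\vee}$ is a genuine flag whose successive quotients are semistable and have strictly increasing reduced heights. It is convenient to reindex it, setting $D_j := (E_{\ell-j})^{\perp}$ for $0 \le j \le \ell$; then $D_0 = E^{\perp} = \{0\}$, $D_\ell = \{0\}^{\perp} = E^{\vee}$, and $D_{j-1} \subsetneq D_j$ (strictly, by dimension count) because $E_{\ell-j} \subsetneq E_{\ell-j+1}$. Writing $i = \ell-j+1$, the $j$-th graded piece is $D_j/D_{j-1} = (E_{i-1})^{\perp}/(E_i)^{\perp}$, so the lemma amounts to the two assertions: (a) $(E_{i-1})^{\perp}/(E_i)^{\perp}$ is semistable for $1 \le i \le \ell$; (b) $H_r\big((E_{i-1})^{\perp}/(E_i)^{\perp}\big) < H_r\big((E_{i-2})^{\perp}/(E_{i-1})^{\perp}\big)$ for $2 \le i \le \ell$.

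Both follow from one identification. For any nested subspaces $F \subset G$ of a rigid adelic space there is a canonical isometry
\[ F^{\perp}/G^{\perp} \;\simeq\; (G/F)^{\vee} . \]
Indeed, the restriction map $E^{\vee} \to G^{\vee}$ is surjective with kernel $G^{\perp}$ and, by Proposition \ref{dq}, induces an isometry $E^{\vee}/G^{\perp} \xrightarrow{\sim} G^{\vee}$; it carries $F^{\perp}$ onto the space of $K$-forms on $G$ vanishing on $F$, which is exactly the image of the isometric injection $(G/F)^{\vee} \hookrightarrow G^{\vee}$ dual to $G \twoheadrightarrow G/F$. Taking $F = E_{i-1} \subset E_i = G$, we obtain $(E_{i-1})^{\perp}/(E_i)^{\perp} \simeq (E_i/E_{i-1})^{\vee}$.

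It then remains to transport the two defining properties of the GS-filtration of $E$ (Proposition \ref{gsc}) across this isomorphism. For (a): $E_i/E_{i-1}$ is semistable, and semistability is stable under duality. This last fact is a short computation: every subspace of a dual $G^{\vee}$ has the form $H^{\perp}$ with $H \subset G$; using additivity of the degree in short exact sequences together with $G^{\vee}/H^{\perp} \simeq H^{\vee}$ (Proposition \ref{dq}) and $\deg H^{\vee} = -\deg H$ (which is \eqref{dual}), one gets $\deg H^{\perp} = \deg H - \deg G$, whence $\mu(H^{\perp}) \le \mu(G^{\vee})$ is equivalent to $\mu(H) \le \mu(G)$. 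So $(E_i/E_{i-1})^{\vee}$ is semistable. For (b): by \eqref{dual}, $H_r\big((E_i/E_{i-1})^{\vee}\big) = H_r(E_i/E_{i-1})^{-1}$, so the inequality $H_r(E_{i-1}/E_{i-2}) < H_r(E_i/E_{i-1})$ of Proposition \ref{gsc} is precisely the reciprocal of assertion (b). With (a) and (b) established, Proposition \ref{gsc} identifies $(D_j)_{0 \le j \le \ell}$ as the GS-filtration of $E^{\vee}$, which is the claim.

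The one point that requires care is the displayed isometry: one must check that the rigid adelic structure that $F^{\perp}/G^{\perp}$ inherits as a subquotient of $E^{\vee}$ coincides with the operator-norm structure of $(G/F)^{\vee}$ — that is, that the subspace, quotient and dual constructions commute appropriately. This is purely formal and is part of the general apparatus of \cite{MR3641657, gaudron2018}; once it is granted, the argument is just the bookkeeping above, built on Proposition \ref{dq}, \eqref{dual} and Proposition \ref{gsc}.
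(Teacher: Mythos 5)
Your proof is correct and follows essentially the same route as the paper's: the paper's own (very brief) argument cites exactly the ingredients you use, namely the isometry $E_{i-1}^{\perp}/E_i^{\perp}\simeq (E_i/E_{i-1})^{\vee}$, equation \eqref{dual}, the invariance of semistability under duality, and the uniqueness characterization of Proposition \ref{gsc}. You merely supply more detail (the derivation of the isometry from Proposition \ref{dq} and the degree computation showing duality preserves semistability), which is fine.
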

 \begin{proof}
 	This relies on equation \eqref{dual}, the isometry between $E_{i-1}^{\perp}\slash E_i^{\perp} $ and $\left( E_i\slash E_{i-1} \right)^{\vee}$ and  the observation that $E$ is semistable if and only if $E^{\vee}$ is. Together with proposition \ref{gsc}, this gives the conclusion.
 \end{proof}

This lemma  has the following consequence for isodual spaces :

\begin{proposition}\label{gsi}
	  Let $(E,\sigma)$ be an (anti-)isodual rigid adelic space over a number field $K$, either of \emph{orthogonal}, \emph{unitary} or \emph{symplectic} type, and let
	 \begin{equation*}\left\lbrace 0 \right\rbrace=E_0 \subset E_1 \subset \dots \subset E_{\ell-1} \subset E_{\ell}=E.\end{equation*}
	 be its GS-filtration. Then :
	 \begin{enumerate}
	 	\item for every $0 \leq i \leq \ell$, one has $\sigma E_i =E_{\ell-i}^{\perp}$,
	 	\item  the subpace $E_i$ is totally isotropic with respect to $b_{\sigma}$ if $i \leq \dfrac{\ell}{2}$, and \emph{co}-isotropic if $i \geq \dfrac{\ell}{2}$,
	 	\item if $i \leq \dfrac{\ell}{2}$, the quotient $E_{\ell-i}/E_i$ is (anti-)isodual,
	 	\item if $0\leq i\leq j\leq \dfrac{\ell}{2}$, then $E_j/E_i \times E_{\ell-i}/E_{\ell-j}$ is (anti-)isodual.
	 \end{enumerate}
	 
\end{proposition}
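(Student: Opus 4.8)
The plan is to derive all four assertions from the first one, which is the only point requiring an actual idea; assertions (2)--(4) then follow formally, using standard facts about how duality interacts with subspaces and quotients.

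\emph{Assertion (1).} Since $\sigma$ is a similarity from $E$ onto $E^\vee$ (in the anti-isodual case one replaces $E^\vee$ by $\overline{E^\vee}$ throughout; as the latter has the same underlying rigid adelic space, the arguments below are unaffected), Lemma \ref{gssd} shows that the GS-filtration of $E^\vee$ is $\bigl(\sigma E_i\bigr)_{0\le i\le\ell}$. On the other hand, Lemma \ref{l1} describes the GS-filtration of $E^\vee$ as $\bigl((E_{\ell-i})^\perp\bigr)_{0\le i\le\ell}$. By uniqueness of the GS-filtration (Proposition \ref{gsc}) these two flags agree term by term, so $\sigma E_i=(E_{\ell-i})^\perp$ for every $i$; comparing dimensions one gets for free the central symmetry $\dim E_i+\dim E_{\ell-i}=\dim E$.

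\emph{Assertion (2).} By definition $b_\sigma(x,y)=\sigma(x)(y)$, so for a subspace $F\subseteq E$ the orthogonal $F^{\perp_{b_\sigma}}=\{y\in E:\sigma(x)(y)=0\text{ for all }x\in F\}$ is exactly the annihilator in $E$ of $\sigma F\subseteq E^\vee$. For $F=E_i$, assertion (1) gives $\sigma E_i=(E_{\ell-i})^\perp$, whose annihilator in $E$ is $E_{\ell-i}$ (biduality $E^{\vee\vee}=E$); hence $E_i^{\perp_{b_\sigma}}=E_{\ell-i}$. Since the $E_k$ form an increasing flag, this gives $E_i\subseteq E_i^{\perp_{b_\sigma}}$ precisely when $i\le\ell-i$ and $E_i^{\perp_{b_\sigma}}\subseteq E_i$ precisely when $i\ge\ell-i$, which is the claim. (In all three types $b_\sigma$ is reflexive, so left and right orthogonals coincide and "totally isotropic"/"coisotropic" are unambiguous.)

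\emph{Assertions (3) and (4).} Both rest on the standard isometric identification $(W')^\perp/W^\perp\simeq(W/W')^\vee$ for nested subspaces $W'\subseteq W\subseteq E$ -- the very one used in the proof of Lemma \ref{l1}, and a consequence of Proposition \ref{dq} -- together with assertion (1). For (3): by (1) the similarity $\sigma$ carries the pair $E_i\subseteq E_{\ell-i}$ onto $(E_{\ell-i})^\perp\subseteq(E_i)^\perp$, hence induces a similarity of rigid adelic spaces $E_{\ell-i}/E_i\to(E_i)^\perp/(E_{\ell-i})^\perp\simeq(E_{\ell-i}/E_i)^\vee$ (the conjugate dual in the anti case); thus $E_{\ell-i}/E_i$ is (anti-)isodual, and of the same type as $E$, since $b_\sigma$ descends to a non-degenerate form of the same symmetry on $E_i^{\perp_{b_\sigma}}/E_i=E_{\ell-i}/E_i$. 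For (4): by (1), $\sigma$ carries the pair $E_{\ell-j}\subseteq E_{\ell-i}$ onto $(E_j)^\perp\subseteq(E_i)^\perp$ (here $i\le j$), and so induces a similarity $E_{\ell-i}/E_{\ell-j}\to(E_i)^\perp/(E_j)^\perp\simeq(E_j/E_i)^\vee$ with the same local similarity ratios as $\sigma$. Writing $A=E_j/E_i$, this exhibits $E_{\ell-i}/E_{\ell-j}$ as a space similar to $A^\vee$, so, just as in the construction of the swap isometry of $A\times A^\vee$ recalled before Proposition \ref{redi}, one obtains a similarity from $E_j/E_i\times E_{\ell-i}/E_{\ell-j}$ onto its dual; hence this product is (anti-)isodual.

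\emph{Main obstacle.} The only non-bookkeeping step is assertion (1): everything hinges on matching, via uniqueness of the GS-filtration, the flag $(\sigma E_i)$ coming from the similarity with the flag $((E_{\ell-i})^\perp)$ coming from Lemma \ref{l1}. Once (1) -- and the central symmetry it forces -- is in hand, (2)--(4) reduce to linear algebra together with the standard compatibility of duality with the formation of induced and quotient adelic structures, which I would simply quote from \cite{MR3641657} or \cite{gaudron2018}.
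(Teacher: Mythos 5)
Your proposal is correct and follows essentially the same route as the paper: assertion (1) by matching, via uniqueness of the GS-filtration, the flag $(\sigma E_i)$ from Lemma \ref{gssd} with the flag $\bigl((E_{\ell-i})^{\perp}\bigr)$ from Lemma \ref{l1}, then (2)--(4) by linear algebra and the identification $E_i^{\perp}/E_j^{\perp}\simeq (E_j/E_i)^{\vee}$. The only (harmless) deviation is in (4), where you build one of the two component similarities and recover the other by transposition and the swap construction, whereas the paper obtains both directly from $\sigma$.
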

\begin{tikzpicture}[scale=0.3,line cap=round,line join=round,>=triangle 45,x=1.0cm,y=1.0cm]
	\clip(-10.797740707288643,-6.451146896710229) rectangle (30.22574897009442,12.514193281311744);
	\draw [line width=1.pt] (-2.,4.)-- (-1.,1.);
	\draw [line width=1.pt] (-1.,1.)-- (1.,-1.);
	\draw [line width=1.pt] (1.,-1.)-- (5.02998692838312,-2.5392982448377848);
	\draw [line width=1.pt,dash pattern=on 3pt off 3pt] (5.02998692838312,-2.5392982448377848)-- (8.,-3.);
	\draw [line width=1.pt] (22.02459794827468,4.)-- (21.02459794827468,1.);
	\draw [line width=1.pt] (21.02459794827468,1.)-- (19.02459794827468,-1.);
	\draw [line width=1.pt] (19.02459794827468,-1.)-- (14.994611019891558,-2.5392982448377843);
	\draw [line width=1.pt,dash pattern=on 3pt off 3pt] (14.994611019891558,-2.5392982448377843)-- (12.024597948274678,-3.);
	\draw [line width=1.pt,dotted] (-1.,1.)-- (21.02459794827468,1.);
	\draw [line width=1.pt,dotted] (1.,-1.)-- (19.02459794827468,-1.);
	\draw [line width=1.pt,dotted] (5.02998692838312,-2.5392982448377848)-- (14.994611019891558,-2.5392982448377843);
	\draw (-7.996705850226926,4.286153388692981) node[anchor=north west] {$\mathbf{E_0=\lbrace 0 \rbrace}$};
	\draw (-2.6864106003807544,1.4267636387758216) node[anchor=north west] {$\mathbf{E_1}$};
	\draw (-0.5856344575844665,-0.4989478254541019) node[anchor=north west] {$\mathbf{E_2}$};
	\draw (3.4992080422972043,-2.132884825406764) node[anchor=north west]{$\mathbf{E_3}$};
	\draw (15.5,-2.01617503969586) node[anchor=north west] {$\mathbf{E_{\boldell-3}}$};
	\draw (19.196674220413907,-0.4989478254541019) node[anchor=north west] {$\mathbf{E_{\boldell-2}}$};
	\draw (21.239095470354744,1.4851185316312738) node[anchor=north west] {$\mathbf{E_{\boldell-1}}$};
	\draw (22.17277375604198,4.2) node[anchor=north west] {$\mathbf{E_{\boldell}=E}$};
	\draw [-stealth, line width=1.pt] (-2.0463903861114074,1.520733421783512) -- (-2.0463903861114074,12.514193281311744);
	\draw [-stealth, line width=1.pt,domain=-4.4421910718956745:30.22574897009442] plot(\x,{(--90.50041984363673--0.03233718793986995*\x)/22.589192969020008});
\end{tikzpicture}
\begin{proof}
\begin{enumerate}
	\item From Lemma \ref{gssd}, the GS-filtration of $\sigma E$ is 
	\[ \left\lbrace 0 \right\rbrace=\sigma E_0 \subset \sigma E_1 \subset \dots \subset \sigma E_{\ell-1} \subset \sigma E_{\ell}=\sigma E. \]
	The conclusion follows from Lemma \ref{l1}.
	\item One has $\sigma E_i \subset E_{i}^{\perp}$ whenever $2i \leq \ell$ and $\sigma E_i \supset E_{i}^{\perp}$ otherwise, whence the assertion. 
	\item For every $0 \leq i \leq \lfloor \frac{\ell}{2}\rfloor$, the subspace $E_{\ell-i}$ is co-isotropic with respect to $b_{\sigma}$ from the previous assertion.
	Hence $b_{\sigma}$ induces a non degenerate bilinear (resp. sesquilinear) form on the quotient $E_{\ell-i}/E_i$, and consequently $\sigma$ achieves an isometric isomorphism from $E_{\ell-i}/E_i$ onto its (anti-)dual.
	\item Likewise, in the orthogonal and symplectic case, $\sigma$ induces an isometry from  $E_j/E_i$ onto $E_{\ell-j}^{\perp}/E_{\ell-i}^{\perp}\simeq \left( E_{\ell-i}/E_{\ell-j}\right) ^{\vee}$ and from $E_{\ell-i}/E_{\ell-j}$ onto $E_i^{\perp}/E_j^{\perp}\simeq\left(E_j/E_i \right) ^{\vee}$, and similarly with duals replaced by conjugate duals in the unitary case. The conclusion follows.
\end{enumerate}
\end{proof}
The next corollary is an obvious consequence of the second point of the previous proposition, which we state separately because of its importance :
\begin{corollary}\label{mtis}
	If $(E,\sigma)$ is an (anti-)isodual rigid adelic space, either of \emph{orthogonal}, \emph{unitary} or \emph{symplectic} type, which is unstable, then its destabilizing subspace $E_1$ is totally isotropic with respect to $b_{\sigma}$. In particular, $\dim E_1 \leq \dfrac{1}{2}\dim E$.
\end{corollary}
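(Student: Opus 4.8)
The statement is an immediate consequence of Proposition \ref{gsi}(2), so the plan is mostly to spell out two small steps. First I would note that, since $E$ is unstable, the length $\ell$ of its GS-filtration satisfies $\ell \geq 2$ (by definition, $E$ is semistable if and only if $\ell = 1$). Hence the index $i = 1$ lies in the range $i \leq \ell/2$ to which the second assertion of Proposition \ref{gsi} applies, and that assertion gives directly that the destabilizing subspace $E_1$ is totally isotropic with respect to $b_\sigma$.

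For the dimension estimate, I would invoke the elementary fact that a totally isotropic subspace $W$ of a finite-dimensional $K$-space $E$ carrying a non-degenerate bilinear form (or, in the unitary case, a non-degenerate sesquilinear form $b_\sigma$) satisfies $W \subseteq W^{\perp_{b_\sigma}}$, while non-degeneracy forces $\dim_K W^{\perp_{b_\sigma}} = \dim_K E - \dim_K W$. In the sesquilinear setting one simply uses that $x \mapsto b_\sigma(x,\cdot)$ is a $\bar{\phantom{u}}$-semilinear isomorphism of $E$ onto $\overline{E^{\vee}}$, which has the same $K$-dimension as $E$, so the same count goes through verbatim. Combining $W \subseteq W^{\perp_{b_\sigma}}$ with the dimension formula and taking $W = E_1$ yields $\dim_K E_1 \leq \dim_K E - \dim_K E_1$, that is $\dim E_1 \leq \frac{1}{2}\dim E$. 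The only external input here is that $b_\sigma$ is non-degenerate, which was recorded right after Definition \ref{typ}.

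There is essentially no obstacle: all the real work was already carried out in Lemma \ref{l1} and Proposition \ref{gsi}. The single point that deserves a word of care is checking that the instability hypothesis is exactly what places $i = 1$ in the admissible range $i \leq \ell/2$ of Proposition \ref{gsi}(2), together with recalling the standard dimension bound for totally isotropic subspaces, which holds identically in the orthogonal, symplectic and unitary cases.
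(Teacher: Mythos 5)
Your proposal is correct and matches the paper's argument: the paper likewise presents the corollary as an immediate consequence of Proposition \ref{gsi}(2) (instability giving $\ell\geq 2$, hence $i=1\leq \ell/2$), with the dimension bound coming from the standard fact that a totally isotropic subspace for the non-degenerate form $b_\sigma$ has dimension at most half that of $E$.
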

In view of \eqref{bst}, we thus see a connection between conjecture \ref{bc} and the description of  totally isotropic spaces of a tensor product of quadratic, resp. symplectic spaces. \medskip

 Besides $b_{\sigma}$, we may also consider a collection of local bilinear  (resp. sesquilinear) forms  $b_{\sigma_v}$ on $E_v$, defined as follows :
\begin{itemize}
	\item If $v$ is finite or real, we simply set $b_{\sigma_v}(x,y)=\sigma_v(x)(y)$, where $\sigma_v : E_v \rightarrow E_v^{\vee}$ is the map defined by \eqref{sloc}.
	\item If $v$ is a complex place associated to a pair $\left\lbrace \gamma, \overline{\gamma}\right\rbrace$ of complex embeddings , we make use of the two corresponding realizations $\sigma_{\gamma}$ and $\sigma_{\overline{\gamma}}$ of $\sigma_v$, as follows : the map
	 \begin{equation*}
	\begin{array}{rclcc}
	 	E_{\gamma} &\times&E_{\overline{\gamma}}  &\longrightarrow &\mathbf{C}\\
	 \left( 	x\otimes_\gamma\lambda \!\!\!\!\right. &,&\left.\!\! \!\!y \otimes_{\overline{\gamma}} \mu \right)&\longmapsto& \overline{\gamma}(\sigma(x)(y))\overline{\lambda}\mu
	\end{array}
	\end{equation*}
	is left $\mathbf{C}$-antilinear and right $\mathbf{C}$-linear. Choosing a $K$-basis of $E$, we may identify the complex vector spaces $E_{\gamma}=E\otimes_\gamma\mathbf{C}$ and $E_{_{\overline{\gamma}}}=E\otimes_{_{\overline{\gamma}}}\mathbf{C}$ with $\mathbf{C}^d$, where $d=\dim E$, and the above formula induces a sesquilinear map $b_{\sigma_v}$ on $\mathbf{C}^d$ (changing the $K$-basis of $E$ yields an equivalent sesquilinear form on $\mathbf{C}^d$).
	
	\item If $K$ is a $CM$ field and $E$ is anti-isodual via a similarity $\sigma : E \rightarrow\overline{E^{\vee}}$, one obtains $\mathbf{C}$-sesquilinear forms $b_{\sigma_v}$ at infinite places, directly by localizing the $K$-sesquilinear form $b_{\sigma}$, as was done above at real places.
\end{itemize}
Note in particular that if $(E,\sigma)$ is an isodual rigid adelic space of orthogonal type, then $b_{\sigma_v}$ is symmetric if $v$ is either finite or real, and Hermitian if $v$ is complex. Clearly, the destabilizing subspace of an unstable space $E$ is also totally isotropic with respect to $b_{\sigma_v}$ for all $v \in V(K)$. The above corollary \ref{mtis} and its local counterparts thus induce strict restrictions on the GS-filtration of isodual rigid adelic spaces.

Recall that the signature $s(h)$ (resp. $s(b)$) of a non degenerate hermitian or quadratic form $h$ (resp. of its polar form $b$) over an ordered field is the difference $s^{+}(h)-s^{-}(h)$ between the number of positive and negative values taken by $h$ on any orthogonal basis. It is related to the Witt index 
$i(h)$ (common dimension of the  maximal totally isotropic subspaces) by the formula
\begin{equation}\label{ws}
 i(h)= \dfrac{\rk(h)-\vert s(h)\vert}{2}.
\end{equation}
Note also the tensor multiplicativity
\[ s(h\otimes h')=s(h)s(h'). \]
In the sequel, we will say that a non degenerate real quadratic form (resp. complex hermitian form) $h$ of rank $n$ is \emph{definite} if $\vert s(h) \vert =n$, and \emph{Lorentzian} if $\vert s(h) \vert = n-2$. 

\begin{theorem}\label{th1}
	Let $(E,\sigma)$ be either an isodual rigid adelic space of orthogonal type over a number field, or an anti-isodual rigid adelic space of unitary type over a $CM$ field. If $(E,\sigma)$ is unstable, then the dimension of its destabilizing subspace is at most
	\[ \dfrac{1}{2}\left( \dim E - \max_{v\in V_\infty} \vert s(b_{\sigma_v}) \vert\right).\]
In particular, f there exists $v \in V_\infty$ 
such that $b_{\sigma_v}$ is definite, then $E$ is semistable.
\end{theorem}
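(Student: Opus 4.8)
The plan is to read off the theorem from Corollary~\ref{mtis} and its local counterparts, combined with the classical identity \eqref{ws} relating rank, signature and Witt index, applied separately at each Archimedean place. The hypotheses --- orthogonal type over a number field, or unitary type over a $CM$ field --- are exactly those under which Corollary~\ref{mtis} holds, so that for an unstable $(E,\sigma)$ the destabilizing subspace $E_1$ is already known to be totally isotropic for the global form $b_\sigma$, whence $\dim E_1 \le \frac{1}{2}\dim E$. The extra input is that $E_1$ stays totally isotropic after localization at each infinite place, where the local form carries a signature which sharpens the estimate.

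First I would fix $v\in V_\infty$. Localizing the vanishing of $b_\sigma$ on $E_1\times E_1$ shows that $(E_1)_v\subset E_v$ is totally isotropic for $b_{\sigma_v}$ --- this is precisely the local version of Corollary~\ref{mtis} noted right before the theorem. As recorded there, $b_{\sigma_v}$ is a \emph{non-degenerate} form of rank $d:=\dim E$ over the relevant ordered field: real symmetric at a real place in the orthogonal case, and complex Hermitian at a complex place (and in the unitary case $K$ has only complex places, at which $b_{\sigma_v}$ is again complex Hermitian). Moreover scalar extension preserves dimension, so $\dim (E_1)_v=\dim E_1$.

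Now I would use that a totally isotropic subspace of a non-degenerate symmetric or Hermitian form over an ordered field has dimension at most the Witt index of the form, so that \eqref{ws} gives, for every $v\in V_\infty$,
\[ \dim E_1 \ \le\ i(b_{\sigma_v}) \ =\ \frac{d-\vert s(b_{\sigma_v})\vert}{2}. \]
Retaining the strongest of these inequalities --- i.e. choosing $v$ so that $\vert s(b_{\sigma_v})\vert$ is maximal --- yields exactly $\dim E_1 \le \frac{1}{2}\bigl(\dim E - \max_{v\in V_\infty}\vert s(b_{\sigma_v})\vert\bigr)$. For the final assertion, argue by contradiction: if $b_{\sigma_v}$ is definite at some $v$, then $\vert s(b_{\sigma_v})\vert = d$, so the displayed bound forces $\dim E_1\le 0$; but an unstable space has a non-zero proper destabilizing subspace ($1\le\dim E_1\le \dim E-1$), which is impossible, so $E$ must be semistable.

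I do not expect a serious obstacle here, since the statement is essentially a repackaging of Corollary~\ref{mtis} together with the signature/Witt-index identity. The one point that deserves care is the complex place in the orthogonal case: one must work with the \emph{Hermitian} form $b_{\sigma_v}$ introduced just before the theorem (the naive $\C$-bilinear localization of $b_\sigma$ is split, hence carries no information), being sure to apply \eqref{ws} to this $d$-dimensional complex form and to check that $(E_1)_v$ is genuinely isotropic for it --- both of which are exactly the facts the preceding discussion has already set up.
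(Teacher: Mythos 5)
Your proposal is correct and follows exactly the paper's argument: the theorem is deduced from the local counterparts of Corollary \ref{mtis} (total isotropy of the destabilizing subspace for each $b_{\sigma_v}$) together with the Witt-index/signature identity \eqref{ws}, which is precisely what the paper invokes. Your added care about using the Hermitian form $b_{\sigma_v}$ at complex places in the orthogonal case matches the setup given just before the theorem.
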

\begin{proof}
This is an immediate consequence of \eqref{ws} and corollary \ref{mtis}.
\end{proof}
\begin{remark}
Over $\mathbf{Q}$, the rigid adelic spaces satisfying the condition of the above corollary correspond to \emph{unimodular Euclidean lattices over $\mathbf{Z}$}, for which semistability is obvious (see \eg \cite{MR2872959}). Indeed, from Remark \ref{latt}, if $L$ is a Euclidean lattice, with scalar product denoted $x \cdot y$, and $\tau$ is an isometry from $L$ to $L^{\star}=\left\lbrace y \in \mathbf{R} L ,\; \forall x \in L, \; y \cdot x \in\mathbf{Z} \right\rbrace$, then $L$ is orthogonal if and only if $\tau^2=1$. Then it is easily seen that the bilinear form $(x,y)\mapsto \tau(x)\cdot y$ on the space $E=\mathbf{R}L$ cannot be positive definite unless $\tau$ is the identity map.
\end{remark} 

In view of conjecture \ref{bc}, the previous observations lead to the following result
\begin{theorem}\label{sgn}
	Let $(E,\sigma)$ and $(F,\tau)$ be either isodual of orthogonal type over a number field, or anti-isodual of unitary type over a $CM$ field. Suppose there exists an Archimedean place $v$ such that 
	\begin{equation*}
	\vert s(b_{\sigma_v})s(b_{\tau_v})\vert \geq \rank E \rank F - 8.
	\end{equation*}
Then $H_{min}(E \otimes F)=H_{min}(E)H_{min}(F)$.
\end{theorem}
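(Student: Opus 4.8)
The plan is to work with $E\otimes F$ and prove the equality by induction on $\rank E+\rank F$, the base of the induction being the known cases. First, since $E_1\otimes F_1$ is a subspace of $E\otimes F$ with $H_r(E_1\otimes F_1)=H_r(E_1)H_r(F_1)=H_{min}(E)H_{min}(F)$, one always has $H_{min}(E\otimes F)\le H_{min}(E)H_{min}(F)$, so only the reverse inequality is at stake. Next, by \eqref{bst} the space $E\otimes F$ is again isodual of orthogonal type (resp. anti-isodual of unitary type), with $b_{\sigma\otimes\tau}=b_{\sigma}\otimes b_{\tau}$, whence $s(b_{(\sigma\otimes\tau)_v})=s(b_{\sigma_v})s(b_{\tau_v})$ and the hypothesis becomes $\vert s(b_{(\sigma\otimes\tau)_v})\vert\ge \rank(E\otimes F)-8$, i.e., by \eqref{ws}, $i(b_{(\sigma\otimes\tau)_v})\le 4$.

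The inductive step begins with the observation that instability of a factor is very constrained here. If $E$ is unstable, Theorem \ref{th1} gives $\dim E_1\le \tfrac12\bigl(\rank E-\vert s(b_{\sigma_v})\vert\bigr)$, while $\vert s(b_{\tau_v})\vert\le\rank F$ together with the hypothesis forces $\vert s(b_{\sigma_v})\vert\ge\rank E-8/\rank F$; hence $\dim E_1\le 4/\rank F$, and since $\dim E_1\ge 1$ this yields $\rank F\le 4$. If some factor has rank $\le 2$ we are done: Corollary \ref{r2} applies when it is not stable, the small-rank cases of \cite{MR3035951} when it is stable, and rank $1$ is trivial. Otherwise, if $E$ is unstable then $\rank F\in\{3,4\}$ and $\dim E_1=1$, so by Proposition \ref{gsi}(1) also $\dim E/E_{\ell-1}=1$, where $\ell$ is the length of the GS-filtration of $E$. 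If $\ell=2$ this forces $\rank E=2$, already covered. If $\ell\ge 3$, I would apply Corollary \ref{c1} to $F\otimes E$ with the flag $\{0\}\subset E_1\subset E_{\ell-1}\subset E$, whose graded pieces $E_1$, $E_{\ell-1}/E_1$, $E/E_{\ell-1}$ have ranks $1$, $\rank E-2$, $1$: the two outer pieces are trivial to handle, and the middle piece is isodual of orthogonal type (Proposition \ref{gsi}(3)) which, since quotienting by the totally isotropic subspace $E_1$ leaves the signature at $v$ unchanged, still satisfies the hypothesis of the theorem relative to $F$ (one has $\vert s(b_{\sigma_v})s(b_{\tau_v})\vert\ge \rank E\,\rank F-8\ge (\rank E-2)\rank F-8$); the ordering condition in Corollary \ref{c1} holds because the reduced heights of successive GS-quotients of $E$ increase strictly, and the induction hypothesis applies to $F\otimes(E_{\ell-1}/E_1)$. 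The case where $F$ is unstable is symmetric. This leaves only the case where $E$ and $F$ are both semistable.

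When $E$ and $F$ are both semistable one has $H_r(E\otimes F)=H_r(E)H_r(F)=H_{min}(E)H_{min}(F)$, so the claim is equivalent to the semistability of $E\otimes F$. If $E\otimes F$ were unstable, Corollary \ref{mtis} would show its destabilizing subspace to be totally isotropic for $b_{(\sigma\otimes\tau)_v}$, hence of dimension at most $i(b_{(\sigma\otimes\tau)_v})\le 4$. The remaining task — and this is the crux, and the step I expect to be the main obstacle — is to derive a contradiction from the existence of such a low-dimensional destabilizing subspace. I would do this by invoking the structural analysis of destabilizing subspaces of tensor products of semistable spaces from \cite{MR3035951}, which combined with the bound $\dim(E\otimes F)_1\le 4$ confines $\rank E\cdot\rank F$ to the range where Conjecture \ref{bc} is already established (in particular forcing one of the factors to have rank $\le 2$, whereupon one concludes as in the second step). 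The difficulty is intrinsic: a usable lower bound on the dimension of a destabilizing subspace of a tensor product is precisely the sort of structural information Bost's conjecture lacks in general, and the whole role of the signature hypothesis — hence of the number $8$ — is to push us into the low-rank regime where such a bound is available.
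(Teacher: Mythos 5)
You correctly carry out the first half of the argument: by \eqref{bst} the product $(E\otimes F,\sigma\otimes\tau)$ is again isodual of orthogonal (resp.\ unitary) type, the signatures multiply, and the hypothesis together with \eqref{ws}, Corollary \ref{mtis} and Theorem \ref{th1} shows that if $E\otimes F$ is unstable its destabilizing subspace $(E\otimes F)_1$ has rank at most $4$; and indeed $H_{min}(E\otimes F)\le H_{min}(E)H_{min}(F)$ is the trivial inequality. But at the decisive step you have a genuine gap, which you yourself flag as ``the main obstacle'': from $\dim (E\otimes F)_1\le 4$ you try to conclude by claiming that the structural results of \cite{MR3035951} ``confine $\rank E\cdot\rank F$ to the range where Conjecture \ref{bc} is already established (in particular forcing one of the factors to have rank $\le 2$)''. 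No such implication exists: the ranks of $E$ and $F$ are in no way constrained by the dimension of a destabilizing subspace of $E\otimes F$ (one-dimensional subspaces, for instance, occur in tensor products of arbitrarily large rank), so this step fails and the argument does not close.

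The missing ingredient is precisely Theorem B of \cite{MR3035951}, which is not a small-rank case of the conjecture but a bound on subspaces of small rank: for arbitrary Hermitian bundles $E$ and $F$ (no semistability needed), every subspace $V\subset E\otimes F$ with $\rank V\le 4$ satisfies $H_r(V)\ge H_{min}(E)H_{min}(F)$. Applied to $V=(E\otimes F)_1$ this gives $H_{min}(E\otimes F)=H_r((E\otimes F)_1)\ge H_{min}(E)H_{min}(F)$ and finishes the proof in one line; the case where $E\otimes F$ is semistable is immediate since then $H_{min}(E\otimes F)=H_r(E)H_r(F)\ge H_{min}(E)H_{min}(F)$. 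In particular the entire induction on $\rank E+\rank F$, the case analysis on instability of the factors, and the reduction to both factors semistable are unnecessary: the paper's proof applies Theorem \ref{th1} directly to $E\otimes F$ and then quotes Theorem B, with no hypotheses on $E$ and $F$ beyond those in the statement.
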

\begin{proof}
	If $E \otimes F$ is semistable, the result is clear. If it is not, then, from Theorem \ref{th1}, the rank of its destabilizing subspace $(E\otimes F)_1$ is at most $4$. Then Theorem B in \cite{MR3035951} implies that $H_r((E\otimes F)_1) \geq H_{min}(E)H_{min}(F)$, whence the result.
\end{proof}

\begin{exs} \label{ex} Let $(E,\sigma)$ and $(F,\tau)$ be as in the previous theorem :
	\begin{enumerate}
	\item\label{un} If there exists an Archimedean place $v$ such that $b_{\sigma_v}$ and $b_{\tau_v}$ are definite, then, $E$, $F$ and $E\otimes F$ are semistable. This extends the result on unimodular Euclidean lattices mentioned in the introduction.
	\item\label{deuxio} Assume that $\rank E \leq 4$ and that there exists a an Archimedean place $v$ $v$ such that $b_{\sigma_v}$ is definite and $b_{\tau_v}$ is Lorentzian. Then $H_{min}(E \otimes F)=H_{min}(E)H_{min}(F)$.
\end{enumerate}
\end{exs}

In the same vein as the previous examples, we get the following :
	\begin{theorem} \label{mix} Let $(E,\sigma)$ and $(F,\tau)$ be either isodual of orthogonal type, or anti-isodual of unitary type over a $CM$ field. 
	Suppose that there exists an Archimedean place $v$ such that one of the following set of conditions is fulfilled :
		\begin{enumerate}
			\item The forms $b_{\sigma_v}$ and $b_{\tau_v}$ are definite.
			\item The form $b_{\sigma_v}$ is definite, the form $b_{\tau_v}$ is Lorentzian, and $F$ is not stable.
			\item The forms $b_{\sigma_v}$ and $b_{\tau_v}$ are Lorentzian, and neither $E$ nor $F$ is stable.
		\end{enumerate}
	Then $H_{min}(E \otimes F)=H_{min}(E)H_{min}(F)$.
\end{theorem}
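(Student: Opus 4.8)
The plan is to establish the three cases in the order stated, using the first to control the middle slices that appear in the second, and the second for those appearing in the third. The first case is essentially Example~\ref{ex}(\ref{un}): Theorem~\ref{th1} forces $E$ and $F$ to be semistable, and by \eqref{bst} together with the multiplicativity of signatures the form $b_{(\sigma\otimes\tau)_v}=b_{\sigma_v}\otimes b_{\tau_v}$ is again definite, so $E\otimes F$ is semistable; hence $H_{min}(E\otimes F)=H_r(E\otimes F)=H_r(E)H_r(F)=H_{min}(E)H_{min}(F)$.

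For the remaining two cases I would first isolate a structural fact. Let $(G,\rho)$ be isodual of orthogonal type — or anti-isodual of unitary type over a $CM$ field — with $b_{\rho_v}$ Lorentzian for some Archimedean $v$, and suppose $G$ is unstable, with GS-filtration $\{0\}=G_0\subset\dots\subset G_m=G$. By Corollary~\ref{mtis} the destabilizing subspace $G_1$ is totally isotropic for $b_\rho$; as a Lorentzian form has Witt index $1$ by \eqref{ws}, this gives $\dim G_1=1$, and Proposition~\ref{gsi}(1) shows $G_{m-1}$ is the orthogonal of the line $G_1$ with respect to $b_\rho$, so $\dim G/G_{m-1}=1$. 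If $\rank G=2$ this forces $m=2$, and such $G$ are handled by Corollary~\ref{r2}. If $\rank G\ge3$, then $G_1\subsetneq G_{m-1}$, hence $m\ge3$; further, $Q:=G_{m-1}/G_1$ is isodual of the same type by Proposition~\ref{gsi}(3), and since a totally isotropic line splits off a hyperbolic plane of signature $0$, the form that $b_{\rho_v}$ induces on $Q$ has signature $s(b_{\rho_v})$ and is therefore definite; so $Q$ is semistable by Theorem~\ref{th1}. But the GS-filtration of $G$ induces on $Q$ the flag $\{0\}\subset G_2/G_1\subset\dots\subset G_{m-1}/G_1$, whose subquotients are semistable with strictly increasing reduced heights; by the uniqueness in Proposition~\ref{gsc} this must be the GS-filtration of $Q$, and semistability of $Q$ forces its length $m-2$ to equal $1$. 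Hence $m=3$, and $\{0\}\subset G_1\subset G_2\subset G$ is a flag in which $G_1$ and $G/G_2$ are lines, $G_2/G_1$ is a semistable isodual space of the same type whose local form at $v$ is definite, and $H_{min}(G_1)<H_{min}(G_2/G_1)<H_{min}(G/G_2)$, these being the successive quotients of the GS-filtration.

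Granting this, the second case runs as follows: $E$ is semistable by Theorem~\ref{th1}, and if $F$ is unstable (the rank~$2$ case being Corollary~\ref{r2}) the structural fact produces a flag $\{0\}\subset F_1\subset F_2\subset F$ of the type above; then $H_{min}(E\otimes F_1)=H_{min}(E)H_{min}(F_1)$ and $H_{min}(E\otimes F/F_2)=H_{min}(E)H_{min}(F/F_2)$ because tensoring with a line merely rescales reduced heights, while $H_{min}(E\otimes F_2/F_1)=H_{min}(E)H_{min}(F_2/F_1)$ by the first case (both $E$ and $F_2/F_1$ being semistable with definite local form at $v$); with $H_{min}(F_1)\le H_{min}(F_2/F_1)\le H_{min}(F/F_2)$, Corollary~\ref{c1} concludes. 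The third case is parallel: apply the structural fact to $F$ (unstable, $b_{\tau_v}$ Lorentzian), handle the outer quotients as lines, and treat the middle quotient $F_2/F_1$ via the second case applied to the pair $(F_2/F_1,E)$ — $b_{\tau_v}$ is definite on $F_2/F_1$, $b_{\sigma_v}$ is Lorentzian on $E$, and $E$ is not stable, which are precisely the hypotheses of the second case. Finally, whenever the space to which the structural fact should be applied is semistable but not stable, I would destabilize it by an arbitrarily small perturbation of the Archimedean metrics, carried out within the class of orthogonal (resp. unitary) isodual spaces — there $b_\rho$ is $K$-rational, hence untouched, so the Lorentzian signature at $v$ survives automatically — and then pass to the limit by continuity of $H_{min}$, as in the proof of Corollary~\ref{r2}.

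The main obstacle is, I expect, twofold. The first is the bound $m\le3$, which is exactly what converts a not-stable isodual space carrying a Lorentzian local form into a three-step flag with a \emph{definite} middle slice; it comes from playing the semistability criterion of Theorem~\ref{th1} against the uniqueness of the GS-filtration (Proposition~\ref{gsc}), and one has to be careful that quotients and orthogonals match up correctly. The second is the perturbation step for a space that is semistable but not stable: one must check that an arbitrarily small deformation of the metrics at the infinite places can be performed inside the locus of orthogonal (resp. unitary) isodual spaces and arranged to destabilize the space (the rank~$2$ situation being absorbed into Corollary~\ref{r2}, which does not ask the perturbed space to remain isodual), after which continuity of $H_{min}$ carries the equality to the limit.
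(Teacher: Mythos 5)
Your proof is correct and follows essentially the same route as the paper: the Lorentzian hypothesis forces the destabilizing subspace and the top GS-quotient of the unstable factor to be lines with an isodual middle quotient whose local form at $v$ is definite, so that multiplicativity holds slice by slice and Corollary \ref{c1} concludes, with case 3 reduced to case 2 and the semistable-not-stable situation handled by the density argument of Corollary \ref{r2}. Your additional observation that the GS-filtration then has length exactly $3$ is correct but not needed (the paper only uses the coarser flag $0\subset F_1\subset F_{\ell-1}\subset F$), and the perturbation-within-the-isodual-class point you flag is passed over just as briefly in the paper's own proof.
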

\begin{proof} The first case is example \ref{ex}\ref{un} above. As for the second case, we let $v$ be an Archimedean place at which $b_{\sigma_v}$ is definite and $b_{\tau_v}$ is Lorentzian. If $F$ is unstable, this implies that its destabilizing subspace $F_1$ is one-dimensional, since it is totally isotropic at $v$. Denoting by $\ell$ the length of the GS-filtration of $F$, we infer hat  $F/F_{\ell-1}$ is also totally isotropic at $v$, whereas the form induced by $b_{\tau_v}$ on $F_{\ell-1}/F_1$ is definite. Clearly, $H_{min}(E\otimes F_1)=H_{min}(E)H_{min}(F_1)$ and  $H_{min}(E\otimes F/F_{\ell-1})=H_{min}(E)H_{min}(F/F_{\ell-1})$, since $F_1$ and $F/F_{\ell-1}$ are one-dimensional. Additionally $H_{min}(E\otimes F_{\ell-1}/F_1)=H_{min}(E)H_{min}(F_{\ell-1}/F_1)$, thanks to Theorem \ref{sgn}. Consequently, one can apply Corollary \ref{c1} and conclude that $H_{min}(E \otimes F)=H_{min}(E)H_{min}(F)$. If $F$ is not stable but semistable, then the same density argument as in the proof of Corollary \ref{r2} applies.
Finally, the third case is an easy combination of the second one with Corollary \ref{c1}.
\end{proof}

\section{Reduction to semistable isodual rigid analytic spaces}\label{sec5}
In this section, we go one step further than Proposition \ref{redi}, and show that the investigation of Conjecture \ref{bc} can be reduced to the case of \emph{semistable} isodual rigid analytic space.
\begin{theorem}\label{redsi}
	The following assertions are equivalent :
	\begin{enumerate}
		\item Conjecture \ref{bc} is true.
		\item\label{dyo} $H_{min} (E \otimes F)=	H_{min}(E)	H_{min}(F)$ whenever $E$ and $F$ are semistable isodual rigid analytic spaces.
	\end{enumerate}
\end{theorem}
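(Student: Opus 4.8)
The implication $(1)\Rightarrow(2)$ is immediate, so the plan is to prove $(2)\Rightarrow(1)$. By Remark \ref{red}, Conjecture \ref{bc} amounts to the statement that $E\otimes F$ is semistable whenever $E$ and $F$ are semistable, so it suffices to deduce this --- for \emph{arbitrary} semistable $E,F$ --- from the special case \ref{dyo} where the two factors are moreover \emph{isodual}. First I would perform a harmless normalization: replacing $E$ by $E[1/H_r(E)]$ and $F$ by $F[1/H_r(F)]$, one may assume $H_r(E)=H_r(F)=1$. This is legitimate since, by \eqref{scale}, rescaling multiplies the reduced heights of all subspaces of a given space by a common positive factor and hence preserves semistability, while $E[a]\otimes F[b]=(E\otimes F)[ab]$ shows that rescaling does not change whether the tensor product is semistable.

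Next I would pass to the doubled spaces $\mathcal{E}:=E\times E^{\vee}$ and $\mathcal{F}:=F\times F^{\vee}$. As in the proof of Proposition \ref{redi}, these are isodual of orthogonal type. They are moreover semistable: $E^{\vee}$ is semistable because $E$ is (see the proof of Lemma \ref{l1}), with $H_r(E^{\vee})=H_r(E)^{-1}=1$ by \eqref{dual}, so $H_{min}(E)=H_{min}(E^{\vee})=1$ and Lemma \ref{ds}(2) gives $H_{min}(\mathcal{E})=1$; since also $H(\mathcal{E})=H(E)H(E^{\vee})=1$, we get $H_r(\mathcal{E})=1=H_{min}(\mathcal{E})$, so $\mathcal{E}$ is semistable, and likewise $\mathcal{F}$. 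Applying hypothesis \ref{dyo} to the semistable isodual pair $(\mathcal{E},\mathcal{F})$ then yields $H_{min}(\mathcal{E}\otimes\mathcal{F})=H_{min}(\mathcal{E})H_{min}(\mathcal{F})=1$.

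Finally I would read off the conclusion from the decomposition
\[ \mathcal{E}\otimes\mathcal{F}=(E\otimes F)\times(E\otimes F^{\vee})\times(E^{\vee}\otimes F)\times(E^{\vee}\otimes F^{\vee}), \]
which is valid because the tensor product distributes over the direct product of rigid adelic spaces, and in which $E\otimes F$ occurs as a direct-product factor with its intrinsic rigid-adelic structure. The (iterated) second part of Lemma \ref{ds} gives $H_{min}(E\otimes F)\ge H_{min}(\mathcal{E}\otimes\mathcal{F})=1$, while on the other hand $H_{min}(E\otimes F)\le H_r(E\otimes F)=H_r(E)H_r(F)=1$. Hence $H_{min}(E\otimes F)=H_r(E\otimes F)$, i.e. $E\otimes F$ is semistable, which completes the proof.

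The argument essentially assembles tools already available in the paper; the one point demanding care --- and the reason the reduction needs both a rescaling and the ``doubling'' device of Proposition \ref{redi} --- is that $E\times E^{\vee}$ is semistable precisely when $H_r(E)=1$. I do not expect any deeper obstruction.
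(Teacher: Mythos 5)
Your proposal is correct, but it takes a genuinely different route from the paper's. The paper never invokes Remark \ref{red}: it first reduces to isodual factors via Proposition \ref{redi}, then proves in a first step that $H_{min}(E\otimes F)=H_{min}(E)H_{min}(F)$ whenever both factors are isodual and one of them is semistable (applying hypothesis \ref{dyo} to $E_1[t]\times E_1[t]^{\vee}$, where $E_1$ is the destabilizing subspace and $t$ is chosen so that $H_{min}(E_1[t])=H_{min}(E_1[t]^{\vee})$), and finally treats two arbitrary isodual factors by a minimal-counterexample argument exploiting the symmetry of the GS-filtration of isodual spaces (Proposition \ref{gsi}: $F_{\ell-1}/F_1$ is isodual and $F/F_{\ell-1}\simeq F_1^{\vee}$), together with Lemma \ref{ds} and a normalization $H_{min}(F)\leq 1$. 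You instead reduce first to semistable factors via Remark \ref{red}, and only then make them isodual by the same doubling device, the key point being exactly the one you flag: after normalizing $H_r(E)=H_r(F)=1$, the doubled spaces $E\times E^{\vee}$ and $F\times F^{\vee}$ are still semistable, so hypothesis \ref{dyo} applies to them directly, and the four-factor decomposition of $(E\times E^{\vee})\otimes(F\times F^{\vee})$ combined with Lemma \ref{ds}(2) returns $H_{min}(E\otimes F)\geq 1=H_r(E\otimes F)$, i.e. semistability of $E\otimes F$. What your route buys is brevity: it bypasses the filtration and minimal-counterexample analysis of the paper's Step 2 entirely. What it costs is the reliance on Remark \ref{red} (the equivalence of Conjecture \ref{bc} with preservation of semistability under tensor product, which the paper only cites, though it does follow from Corollary \ref{c1} by a double induction on the GS-filtrations), and on the standard multiplicativity facts $H(E\times E^{\vee})=H(E)H(E^{\vee})$ and $H_r(E\otimes F)=H_r(E)H_r(F)$, which you should state explicitly since the paper does not; by contrast, the paper's proof stays within the $H_{min}$-multiplicativity formulation throughout and produces along the way the intermediate statement (its Step 1) that multiplicativity holds whenever both factors are isodual and at least one is semistable, which your argument does not yield.
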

\begin{proof}
	The second assertion is obviously implied by the first one, so we only have to prove the reverse implication. We thus assume that the conjecture is proven for the tensor product of two semistable isodual rigid analytic spaces and wish to prove that it is then true for the tensor product of any two rigid analytic spaces $E$ and $F$. Thanks to Proposition \ref{redi}, it is enough to establish the result when $E$ and $F$ are isodual. \smallskip
	
	We proceed in two steps.\medskip
	
	\noindent\textit{\textbf{Step 1.}} We prove the result when $E$ and $F$ are isodual and at least one the two, say $F$, is moreover \emph{semistable}. There is nothing to prove if $E$ is also semistable, so we assume it is not, and let $E_1$ be its destabilizing space. Arguing as in the proof of Proposition \ref{redi}, we can choose $t>0$ such that
	 \[ H_{min}(E_1[t])=H_{min}(E_1[t]^{\vee}), \] so that $E_1[t]\times E_1[t]^{\vee}$ is semistable and isodual. Due to our assumption, since $F$ is also semistable and isodual, we have
	\begin{align}\label{step1}
		H_{min}\left( (E_1[t]\times E_1[t]^{\vee}) \otimes F\right)&=H_{min}(E_1[t]\times E_1[t]^{\vee})H_{min}(F)\nonumber\\
		&=H_{min}(E_1[t])H_{min}(F)\nonumber\\
		&=t H_{min}(E_1)H_{min}(F).
	\end{align}
	On the other hand, 
	\begin{align}
		H_{min}\left( (E_1[t]\times E_1[t]^{\vee})\otimes F \right) &=H_{min} \left( E_1[t] \otimes F \times E_1[t]^{\vee}\otimes F \right)\nonumber\\
		&=\min \left( H_{min} (E_1[t] \otimes F), H_{min} (E_1[t]^{\vee}\otimes F)\right)\nonumber\\
		& \leq H_{min} (E_1[t] \otimes F)= tH_{min} (E_1\otimes F) \leq t H_{min}(E_1)H_{min}(F)
	\end{align}
which together with \eqref{step1} yields  \begin{equation*} 
	H_{min}(E_1)H_{min}(F) = H_{min} (E_1\otimes F).
\end{equation*}
Finally, we obtain
\[ H_{min}(E)H_{min}(F) = H_{min}(E_1)H_{min}(F) =H_{min} (E_1\otimes F) \leq H_{min} (E\otimes F) \leq H_{min}(E)H_{min}(F)\]
and we conclude that $H_{min}(E)H_{min}(F) =H_{min} (E\otimes F)$, as was to be shown.\medskip

\noindent\textit{\textbf{Step 2.}} We now prove that the result remains true for the tensor product of any two isodual rigid adelic spaces $E$ and $F$. Assume, by way of contradiction, that it is not the case, and chose two isodual rigid adelic spaces $E$ and $F$ such that 
	\begin{equation}\label{woc}
		H_{min}(E\otimes F)< H_{min}(E)H_{min}(F).	
	\end{equation} 
From Step 1, we can also assume that neither $E$ nor $F$ is semistable, and we can moreover suppose that $\dim E + \dim F$ is minimal among pairs $(E,F)$ satisfying these properties. 
	Finally, since the previous assumptions are invariant by scaling, we can assume, replacing $F$ by $F[t]$ for a suitable $t$ if necessary, that $H_{min}(F) \leq 1$ (the usefulness of this condition will appear below).
	
	Then, denoting by $\ell$ the length of the GS-filtration of $F$, we infer from Lemma \ref{ds}\ref{ena} that
	\begin{equation}\label{onze}
		H_{min}(E\otimes F_1)\geq H_{min}(E\otimes F_{\ell-1})\geq \min \left( H_{min}(E\otimes F_1),	H_{min}\left( E\otimes (F_{\ell-1}/F_1) \right) \right).
	\end{equation}
	From the minimality assumption on $\dim E + \dim F$, and since $F_{\ell-1}/F_1$ is isodual (Proposition \ref{gsi}), we claim that \[ H_{min}(E\otimes F_{\ell-1}/F_1)=H_{min}(E) H_{min} (F_{\ell-1}/F_1). \] Moreover, the inequality $H_{min}(E\otimes F_1)\leq H_{min}(E)H_{min}(F_1)$ is always satisfied, and we know from the definition of the GS-filtration that $H_{min}(F_1) \leq H_{min} (F_{\ell-1}/F_1)$. Altogether, we can conclude that the right-hand side of \eqref{onze} is equal to $H_{min}(E\otimes F_1)$, so that 
	\begin{equation}\label{treize}
		H_{min}(E\otimes F_{\ell-1})=H_{min}(E\otimes F_1).
	\end{equation}
	Then, using the same argument, we see that
	\begin{align}\label{douze}
		H_{min}(E\otimes F_{\ell-1})\geq H_{min}(E\otimes F)&\geq \min (H_{min}(E\otimes F_{\ell-1}),	H_{min}(E\otimes (F/F_{\ell-1})))\nonumber\\
		&= \min (H_{min}(E\otimes F_1),	H_{min}(E\otimes (F/F_{\ell-1})))\text{ because of \eqref{treize}}\nonumber\\
		&=H_{min}(E\otimes (F_1\times F/F_{\ell-1}))\nonumber\\
		&=H_{min}(E\otimes (F_1\times F_1^{\vee})) \text{ since } F/F_{\ell-1} \simeq F^{\vee}/F_1^{\perp} \simeq F_1^{\vee}\nonumber\\
		&=\min\left( H_{min}(E\otimes F_1),H_{min}(E\otimes F_1^{\vee}) \right).
	\end{align}
	As in the previous step, we can choose $t>0$ such that \[ H_{min}(F_1[t])=H_{min}(F_1[t]^{\vee}) \] so that $F_1[t]\times F_1[t]^{\vee}$ is isodual \emph{and semistable}. From step 1, we thus infer that 
	\begin{align}\label{ap}
		H_{min}(E\otimes (F_1[t]\times F_1[t]^{\vee}))&=H_{min}(E)H_{min}(F_1[t]\times F_1[t]^{\vee})\nonumber\\&=tH_{min}(E)H_{min}(F_1)=t^{-1}H_{min}(E)H_{min}(F_1^{\vee}).
	\end{align}
	On the other hand
	\begin{align}\label{last}
		H_{min}(E\otimes (F_1[t]\times F_1[t]^{\vee}))&=H_{min}(E\otimes F_1[t]\times E\otimes F_1[t]^{\vee})\nonumber\\
		&=\min H_{min}(E\otimes F_1[t], E\otimes F_1[t]^{\vee})\nonumber\\
		&\leq H_{min}(E\otimes F_1[t])=t H_{min}(E\otimes F_1)\leq tH_{min}(E)H_{min}(F_1)
	\end{align}
and similarly
\begin{equation*}
	H_{min}(E\otimes (F_1[t]\times F_1[t]^{\vee})) \leq t^{-1}H_{min}(E\otimes F_1^{\vee})\leq t^{-1}H_{min}(E)H_{min}(F_1^{\vee}).
\end{equation*}
Together with \eqref{ap}, this implies that 
 
	\[ H_{min}(E)H_{min}(F_1)= H_{min}(E\otimes F_1)  \]
	and
	\[ H_{min}(E)H_{min}(F_1^{\vee})=H_{min}(E\otimes F_1^{\vee}). \]

	Consequently, \eqref{douze} reads
	\[  H_{min}(E\otimes F_{\ell-1})\geq H_{min}(E\otimes F)\geq H_{min}(E)\min (H_{min}(F_1), H_{min}(F_1^{\vee})).\]
	As $F_1$ and its dual are semistable, $H_{min}(F_1)=H_r(F_1)=H_r(F_1^{\vee})^{-1}=H_{min}(F_1^{\vee})^{-1}$ and since we have assumed that $H_{min}(F)=H_{min}(F_1)\leq 1$, this implies that  $H_{min}(F_1)\leq H_{min}(F_1^{\vee})$. 
	
The final reformulation of \eqref{douze} is thus
	\[  H_{min}(E\otimes F_{\ell-1})\geq H_{min}(E\otimes F)\geq H_{min}(E)H_{min}(F_1)=H_{min}(E)H_{min}(F).\]
from which we conclude that $H_{min}(E\otimes F)=H_{min}(E)H_{min}(F)$. This contradicts our initial hypothesis.

\end{proof}
\begin{remark}
The theorem and its proof remain true with \emph{isodual} replaced  by \emph{orthogonal}, \emph{symplectic}, or \emph{unitary} (if $K$ is a CM-field). 
\end{remark}

\section{Isoduality and automorphisms}\label{sec4}
The role of automorphisms with respect to the GS-filtration and Conjecture\ref{bc} has been stressed on by several authors (\cite{MR1423622},\cite{MR3096565}, \cite{CN3}, \cite{MR4002393}). We wish to study more specifically in this section its interplay with isoduality.\medskip

An \emph{automorphism} of a rigid adelic space $E$ is an isometry from $E$ to itself, that is, an element of $\GL(E)$ which preserves all local norms $\Vert \cdot \Vert_v$, $v \in V(K)$. If $L$ is the underlying $\mathcal{O}_K$-lattice of the corresponding Hermitian bundle, an automorphism is thus an element of the (discrete) group $\GL(L)$ which simultaneously belongs to the unitary group of every Archimedean completion. With this description, the set $\Aut E$ of such automorphisms is easily seen to be a finite group. It also acts on the dual $E^{\vee}$ by transposition
 \[ g^{\vee} (\varphi):= \varphi \circ g, \ \ g \in \Aut E , \varphi \in E^{\vee}, \] 
and one can identify  $\Aut E^{\vee}$ with the set $\left\lbrace g^{\vee}, g \in \Aut E \right\rbrace$. 

Remarkably, the automorphism group $\Aut E$ stabilizes the Grayson-Stuhler-filtration

\begin{proposition}[\cite{MR1423622}]
Let $\left\lbrace 0 \right\rbrace=E_0 \subset E_1 \subset \dots \subset E_{\ell-1} \subset E_{\ell}=E$ the GS-filtration of a rigid analytic space $E$. Then, $g(E_i)=E_i$ for all $g \in G$ and all $0 \leq 1 \leq i \leq r$.
\end{proposition}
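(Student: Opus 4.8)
The plan is to exploit the uniqueness characterization of the GS-filtration (Proposition \ref{gsc}) together with the fact that an automorphism $g \in \Aut E$ is an isometry, hence preserves the reduced height of every subspace. Concretely, first I would observe that for any $g \in \Aut E$ and any subspace $F \subseteq E$, the restriction of $g$ to $F$ is an isometry from $F$ onto $g(F)$, so $H_r(g(F)) = H_r(F)$; likewise $g$ induces an isometry $E/F \to E/g(F)$, so $H_r(E/g(F)) = H_r(E/F)$. In particular $g$ permutes the subspaces realizing $H_{min}$, and since the destabilizing subspace $E_1$ is characterized as the \emph{largest} such subspace, $g(E_1)$ is again a destabilizing subspace of maximal dimension, forcing $g(E_1) = E_1$ by the uniqueness in the definition of $E_1$.

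Next I would push this through the recursive definition of the filtration. Since $g(E_1) = E_1$, the automorphism $g$ descends to an automorphism $\bar g$ of the quotient rigid adelic space $E/E_1$ (it is an isometry of $E/E_1$ because $g$ is an isometry of $E$ and preserves $E_1$). The destabilizing subspace of $E/E_1$ is $E_2/E_1$, so by the same argument applied to $\bar g$ we get $\bar g(E_2/E_1) = E_2/E_1$, i.e. $g(E_2) = E_2$. Iterating, $g(E_i) = E_i$ for all $0 \le i \le \ell$. Alternatively — and perhaps more cleanly — one can invoke Proposition \ref{gsc} directly: the flag $(g(E_i))_{0 \le i \le \ell}$ has semistable successive quotients $g(E_i)/g(E_{i-1}) \cong E_i/E_{i-1}$ (isometric, hence same reduced height, hence still semistable) with the same strictly increasing sequence of reduced heights, so by the uniqueness statement in Proposition \ref{gsc} it must coincide with $(E_i)_{0\le i\le \ell}$. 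This avoids the descent-to-quotient bookkeeping.

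The main (and essentially only) obstacle is making sure the quotient and restriction really are isometries of rigid adelic spaces in the precise sense of the paper — i.e. that $g$ restricted to $F$ preserves \emph{all} local norms $\Vert\cdot\Vert_v$, including at finite places, and that the induced map on $E/F$ preserves the quotient norms. This is immediate from the definitions: at finite places $g$ maps the underlying $\mathcal{O}_K$-lattice $L$ onto itself, so it maps $L\cap F$ onto $L \cap g(F)$ and $L/(L\cap F)$ isomorphically onto $L/(L\cap g(F))$; at infinite places $g_v$ is a Hermitian isometry, so it carries $h_v|_{F_v}$ to $h_v|_{g(F)_v}$ and the orthogonal-complement description of the quotient metric is preserved. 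Once this is granted, the proof is just the uniqueness argument above, which I would write in two lines using Proposition \ref{gsc}.

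\begin{proof}
Fix $g \in \Aut E$. For any subspace $F$ of $E$, the map $g$ is an isometry of $E$ preserving local structures, hence its restriction $g|_F \colon F \to g(F)$ is an isometry of rigid adelic spaces (at finite places $g$ stabilizes the underlying lattice $L$, so $g(L\cap F) = L \cap g(F)$; at infinite places $g_v$ is a Hermitian isometry). Therefore $H_r(g(F)) = H_r(F)$ for every subspace $F$. Consequently $g(E_i)/g(E_{i-1}) \cong E_i/E_{i-1}$ as rigid adelic spaces for each $i$, so each quotient $g(E_i)/g(E_{i-1})$ is semistable and $H_r(g(E_i)/g(E_{i-1})) = H_r(E_i/E_{i-1})$. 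Thus the flag $\left\lbrace 0 \right\rbrace = g(E_0) \subset g(E_1) \subset \dots \subset g(E_\ell) = E$ satisfies both conditions of Proposition \ref{gsc}, and by the uniqueness asserted there it equals the GS-filtration of $E$. Hence $g(E_i) = E_i$ for all $0 \le i \le \ell$.
\end{proof}
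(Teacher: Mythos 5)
Your proof is correct and follows essentially the route the paper intends: the proposition itself is only cited (to Bost), but the paper's remark that uniqueness of the GS-filtration entails automorphism-invariance, together with Lemma \ref{gssd} (similarities transport the GS-filtration), amounts to exactly your argument — an automorphism is an isometry of $E$ onto itself, so the transported flag $(g(E_i))$ satisfies both conditions of Proposition \ref{gsc} and must coincide with $(E_i)$. You could even shorten it by applying Lemma \ref{gssd} with $\sigma = g$ and invoking uniqueness of the filtration directly.
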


The natural actions of $G=\Aut E$ on $E$ and $E^{\vee}$ described above correspond to faithful representations $\rho : G\rightarrow GL_K(E)$ and  $\rho^{\vee} : G\rightarrow GL_K(E^{\vee})$ :
\begin{align}
	\rho(g)&=g & g \in G \label{rep1}\\
	\rho^{\vee}(g)&= (g^{-1})^{\vee}(\varphi)& g \in G.\label{rep2}
\end{align}

If $(E,\sigma)$ is an isodual  rigid adelic space, there is an additional representation to consider, stemming out from the action of $\sigma$ : for every $g \in \Aut E$, the product $g^{\vee} \sigma$ is an isometry from $E$ to $E^{\vee}$, so that $\sigma^{-1}g^{\vee} \sigma$ is an isometry from $E$ to itself. It follows that the map
\begin{equation}\label{auto}
	g \rightarrow g^{\sigma}:=\sigma^{-1}(g^{-1})^{\vee} \sigma
\end{equation}
is an automorphism of $G=\Aut E$, which gives rise to the "twisted" representation 
\begin{equation}\label{rep3}
	\rho^{\sigma}(g) := \rho (g^{\sigma}), \ \ g \in G.
\end{equation}

Bringing together \ref{rep2}, \ref{auto} and \ref{rep3}, we infer that $\rho^{\sigma}$ and $\rho^{\vee}$ are equivalent as representations of $G$  over $K$. Namely, $\sigma$ induces a $K[G]$-isomorphism from $(E, \rho^{\sigma})$ onto $(E^{\vee},\rho^{\vee})$, which maps the $G$-invariant subspaces of $E$ onto those of $E^{\vee}$ bijectively :

\begin{equation}\label{giso}
 \begin{tikzcd}
 E\arrow[r, "\sigma"] \arrow{d}[swap]{\rho^{\sigma}(g)}	
 & E^{\vee}  \arrow[d,"\rho^{\vee}(g)" ] \\
 E\arrow[r,"\sigma"  ]
 &E^{\vee}  
 \end{tikzcd}
 \end{equation}
The properties of these two representations allow to derive more consequences on the GS-filtration.  
Suppose that the $K[G]$-module $E$ splits as 
\begin{equation*} 
E=\oplus_{i}V_i^{a_i}
\end{equation*}
where the $V_i$s are irreducible pairwise non-isomorphic $K[G]$-modules. If all irreducible components are self-dual, i.e $V_i \sim_{K[G]}V_i^{\vee}$ for all $i$, then clearly, $(E, \rho)$ and $(E^{\vee},\rho^{\vee})$ are also equivalent over $K$.

 The following lemma shows that the above self-duality condition is always satisfied when $K$ is either a totally real or a $CM$ extension of $\mathbf{Q}$.

\begin{lemma}\label{4dot2}
	Let $(E,\sigma)$ be a rigid adelic space over a number field $K$, $G=\Aut E$ its automorphism group. 
	If $K$ is a totally real or $CM$-field, then $E$ and $E^{\vee}$ are isomorphic as $K[G]$-modules, \ie the representations $\rho$ and $\rho^{\vee}$ are equivalent.
\end{lemma}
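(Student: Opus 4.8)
The plan is to reduce the statement to a classical fact in the representation theory of finite groups over fields of characteristic zero, namely that a representation is isomorphic to its contragredient as soon as all its irreducible constituents are self-dual, and then to verify the self-duality of each irreducible $K[G]$-module using the hypothesis that $K$ is totally real or $CM$. Concretely, write $E \simeq \bigoplus_i V_i^{a_i}$ as in the excerpt, with the $V_i$ pairwise non-isomorphic irreducible $K[G]$-modules; since $\operatorname{char} K = 0$, the group algebra $K[G]$ is semisimple (Maschke), so $E^{\vee} \simeq \bigoplus_i (V_i^{\vee})^{a_i}$ and it suffices to show $V_i \simeq_{K[G]} V_i^{\vee}$ for every $i$.

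First I would recall why every irreducible $\mathbf{Q}[G]$-module, and hence every irreducible $K[G]$-module for $K$ totally real, is self-dual: for a finite group $G$ the character of any irreducible complex representation takes values in a cyclotomic field, and the contragredient has character $g \mapsto \overline{\chi(g)} = \chi(g^{-1})$; over $\mathbf{R}$ (a fortiori over $\mathbf{Q}$ or any totally real field, after extending scalars and using that isomorphism of semisimple modules can be checked after a faithfully flat base change) every real representation carries a $G$-invariant positive definite symmetric bilinear form, obtained by averaging an arbitrary one over $G$, and such a form furnishes a $G$-equivariant isomorphism $V \to V^{\vee}$. Thus the totally real case is immediate. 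For the $CM$ case, let $K^+$ be the maximal totally real subfield, so $K = K^+(\sqrt{-d})$ for a totally positive $d \in K^+$, and $[K:K^+] = 2$ with Galois conjugation the complex conjugation $\bar{\phantom{u}}$. Given an irreducible $K[G]$-module $V$, I would average a Hermitian inner product on each complex completion, or more algebraically, use that the rigid adelic structure itself supplies $G$-invariant positive definite Hermitian forms $h_v$ at the infinite places; descending, one produces a nonzero $G$-equivariant map $V \to \overline{V^{\vee}}$. The remaining point is to pass from $\overline{V^{\vee}}$ back to $V^{\vee}$: here one uses that complex conjugation fixes $\mathbf{Q}$ and acts on $K$, so $\overline{V^{\vee}}$ and $V^{\vee}$ have the same underlying $\mathbf{Q}[G]$-module, whence (combining with the totally real statement applied to $G$ over $\mathbf{Q}$, or directly comparing characters, which are $\mathbf{Q}$-valued up to conjugation and thus conjugation-invariant on real/CM fields for the relevant sums) $V \simeq_{K[G]} V^{\vee}$.

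The cleanest way to organize the proof avoids chasing Hermitian forms and instead argues with Brauer characters / Schur indices: over a field $K$ of characteristic zero, an irreducible $K[G]$-module $V$ is self-dual if and only if the underlying $\overline{K}[G]$-module $V \otimes_K \overline{K}$ is self-dual, which holds iff each absolutely irreducible constituent $W$ satisfies $W \simeq W^{\vee}$, i.e.\ $\chi_W(g^{-1}) = \chi_W(g)$ for all $g$. Now for $K$ totally real this is automatic because $\chi_W(g^{-1}) = \overline{\chi_W(g)}$ and the Galois-orbit under $\operatorname{Gal}(K(\chi_W)/K)$ that defines the $K$-rational constituent $V$ already identifies $\chi_W$ and $\overline{\chi_W}$ (complex conjugation restricted to the cyclotomic field $K(\chi_W)$ fixes $K$ since $K \subset \mathbf{R}$). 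For $K$ a $CM$-field, complex conjugation is a well-defined element of $\operatorname{Gal}(K(\chi_W)/\mathbf{Q})$ restricting to the nontrivial automorphism of $K/K^+$; but $K^+$ is totally real, so $V \otimes_{K} K$-conjugation permutes the constituents exactly as complex conjugation does, and one checks that $V^{\vee}$ lies in the same $\operatorname{Gal}(\overline{K}/K)$-orbit, giving $V \simeq_{K[G]} V^{\vee}$.

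The main obstacle is the $CM$ case: over a totally real field self-duality of every representation is essentially trivial via invariant symmetric forms, but over a $CM$-field one must be careful that $V^{\vee}$ (and not merely the conjugate dual $\overline{V^{\vee}}$) is $K[G]$-isomorphic to $V$, since a priori an averaged invariant form on a $K[G]$-module is Hermitian rather than bilinear. The resolution is that for a finite group the relevant character fields are abelian over $\mathbf{Q}$, complex conjugation acts on them in a way compatible with the embedding of $K$, and the $\operatorname{Gal}(K(\chi)/K)$-orbit defining a $K$-irreducible constituent is stable under $g \mapsto g^{-1}$; I would spell this out by a short character computation, possibly citing the standard structure theory of $K[G]$ for $K \subset \mathbf{Q}^{\mathrm{ab}}$. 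Once this is in place, the lemma follows by summing over $i$ and invoking semisimplicity of $K[G]$ to conclude $E \simeq E^{\vee}$ as $K[G]$-modules, i.e.\ $\rho \simeq \rho^{\vee}$.
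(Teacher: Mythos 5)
Your opening reduction (semisimplicity of $K[G]$ in characteristic zero, so it suffices to treat each irreducible constituent) agrees with the paper, and your totally real case is essentially sound, provided you average a totally positive definite $K$-valued symmetric form rather than ``an arbitrary one'' so that non-degeneracy is preserved. Note, however, that the paper does not argue by averaging or by characters: it produces a $K$-rational $G$-invariant \emph{totally positive definite} form by an approximation argument, using that the Gram matrices $H_v$ of the metrics $h_v$ lie in the archimedean localizations $\mathcal{F}_v(G)$ of the finite-dimensional $K^+$-vector space $\mathcal{F}(G)$ of $G$-invariant symmetric (resp. Hermitian) matrices over $K$, and that $\mathcal{F}(G)$ is dense in $\bigoplus_{v\in V_\infty}\mathcal{F}_v(G)\cong \R\otimes_{\Q}\mathcal{F}(G)$. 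This step exploits the hypothesis $G=\Aut E$ (i.e. $G$ preserves every $h_v$) in an essential way, whereas your argument uses the adelic structure only in passing.

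The genuine gap is the CM case, precisely at the point you yourself flag. The claim you rely on --- that for $K$ a CM field the $\operatorname{Gal}(\overline{K}/K)$-orbit of an absolutely irreducible constituent is stable under $g\mapsto g^{-1}$, so that $V^{\vee}\simeq_{K[G]}V$ --- is false as a general statement about $K[G]$-modules. Take $K=\Q(\zeta_5)$ (a CM field), $G=\mathbf{Z}/5\mathbf{Z}$ and $\chi\colon G\to K^{\times}$ a faithful character: then $K(\chi)=K$, so the Galois orbit is the singleton $\{\chi\}$, while $\chi^{\vee}=\chi^{-1}\neq\chi$; yet $\chi$ \emph{is} conjugate-self-dual, since $\chi(g^{-1})=\overline{\chi(g)}$ with $\overline{\phantom{u}}$ the CM conjugation, i.e. it carries a $G$-invariant Hermitian form. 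This shows two things: first, what averaging Hermitian metrics (or your conjugation-on-character-fields computation) actually yields over a CM field is an isomorphism onto the \emph{conjugate} dual $\overline{V^{\vee}}$, because complex conjugation restricted to $K$ is the nontrivial automorphism of $K/K^{+}$, not the identity; second, no purely character-theoretic or Galois-orbit manipulation can by itself upgrade conjugate-self-duality to self-duality, so your proposed ``resolution'' in the last paragraph, and likewise its Brauer-character/Schur-index reformulation, does not go through as stated. To handle the CM case you must genuinely use the structure $(E,(h_v)_{v})$ with $G=\Aut E$, as the paper does by constructing the invariant form over $K$ from the metrics themselves, and then address the dual-versus-conjugate-dual passage directly rather than by the orbit argument.
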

\begin{proof}
	If $E=\oplus_{i}V_i^{a_i}$ is the splitting of $E$ into irreducible components, it is enough to show that each $V_i$ carries a non-degenerate bilinear (resp. Hermitian) $G$-invariant form if $K$ is a totally real (resp. $CM$) number field. 
	
	Let  $\overline{\phantom{s}}$ stand for the complex conjugation if $K$ is a $CM$-field, or the identity in the totally real case. This extends uniquely to an involution on $K_v$ for each infinite place $v$, which we denote likewise. 
	
	Let $d=\dim E$. If we fix a $K$-basis $\mathcal{B}$ of $E$, and identify the elements of $G$ with their matrices with respect to it, viewed as elements in $M_d(K) \hookrightarrow M_d(K_v)$, we can define

\[ {\mathcal F}(G) := \{ H \in M_d(K) \mid  H = \overline{H}^{tr} 
\mbox{ and }
gH\overline{g}^{tr} = H \mbox{ for all } g\in G \}. \] 

and, for each $v \in V_{\infty}$,	\[ {\mathcal F}_v(G) := \{ 
	H\in M_d(K_v) \mid  
	H = \overline{H}^{tr} \mbox{ and }
	\gamma(g) H \overline{\gamma(g)}^{tr} = H 
	\mbox{ for all } g\in G\}, \]
where $\gamma : K \hookrightarrow K_v$ is an embedding associated to $v$.
Clearly, ${\mathcal F}(G)$ is a finite 
dimensional vector space over the fixed field $K^+$
of $\overline{\phantom{s}}$, and for all $v \in V_{\infty}$, one has \[ \dim _{\R } ({\mathcal F}_v(G)) = \dim _{K^+} ({\mathcal F}(G)). \] Identifying $\R \otimes _{\Q } K^+ $ with $\bigoplus _{v \in V_{\infty} } \R $ we hence get
\[ \oplus _{v \in V_{\infty}} {\mathcal F}_v(G) \cong  \R \otimes _{\Q }  {\mathcal F}(G), \] so that ${\mathcal F}(G) \cong \Q \otimes_{\Q }  {\mathcal F}(G)$ 
is dense in $\oplus _{v \in V_{\infty}} {\mathcal F}_v(G) $. The Gram matrix $H_v$ of $h_v$ with respect to $\mathcal{B}$ belongs to ${\mathcal F}_v(G)$ for all $v \in V_{\infty}$, since $G\leq \Aut(L, h_v) $. Consequently, in a small enough neighborhood of $\left( h_v \right)_{v\in V_{\infty}}$ in $\oplus _{v \in V_{\infty}} {\mathcal F}_v(G)$, one can find a totally positive definite symmetric (resp.Hermitian) matrix $H$ belonging to ${\mathcal F}(G)$. The corresponding quadratic (resp. Hermitian) form, when restricted to $V_i$,  is clearly $G$-invariant and non-degenerate.
\end{proof}

\begin{proposition}\label{autoiso} 
	Let $(E,\sigma)$ be an isodual rigid adelic space over $K$ with GS-filtration \begin{equation*}\left\lbrace 0 \right\rbrace=E_0 \subset E_1 \subset \dots \subset E_{\ell -1} \subset E_\ell=E,
	\end{equation*} and $G=\Aut E$ its automorphism group. We assume that the representation $\rho$ and $\rho^{\vee}$ are equivalent over $K$ (this is the case if $K$ is either totally real or $CM$). Let  $V$ a $K[G]$-submodule of $E_i \slash E_{i-1}$. Then $E_{\ell-i+1}\slash E_{\ell-i}$ contains a $K[G]$-submodule isomorphic to $V^{\vee}$.
\end{proposition}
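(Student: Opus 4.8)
The plan is to exploit the $K[G]$-isomorphism $\sigma : (E,\rho^\sigma) \xrightarrow{\ \sim\ } (E^\vee,\rho^\vee)$ from diagram \eqref{giso}, together with Lemma \ref{l1}, which identifies the GS-filtration of $E^\vee$ as the flag $\bigl(E_{\ell-j}^\perp\bigr)_j$. First I would note that all the subspaces in play are $G$-stable: the GS-filtration steps $E_{i-1} \subset E_i$ are $G$-stable by the Proposition of \cite{MR1423622}, hence the subquotient $E_i/E_{i-1}$ is a $K[G]$-module on which $\rho$ acts, and likewise $E_{\ell-i+1}^\perp \subset E_{\ell-i}^\perp$ are $G$-stable for the action $\rho^\vee$, with quotient $E_{\ell-i}^\perp / E_{\ell-i+1}^\perp$. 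By Proposition \ref{dq} the natural pairing induces a $K$-linear isomorphism $E_{\ell-i}^\perp / E_{\ell-i+1}^\perp \simeq (E_{\ell-i+1}/E_{\ell-i})^\vee$, and this is in fact $G$-equivariant for $\rho^\vee$ on the left and the dual $\rho^\vee$ (i.e.\ the contragredient of $\rho$) on the right, because the pairing $E^\vee \times E \to K$ is $G$-invariant by the very definition \eqref{rep2} of $\rho^\vee$.

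Next I would transport a submodule $V \subseteq E_i/E_{i-1}$ through $\sigma$. Since $\sigma$ carries $E_j$ to $E_{\ell-j}^\perp$ (this is Lemma \ref{l1} applied to the similarity $\sigma$, or equivalently the first point of Proposition \ref{gsi}), it induces an isomorphism of subquotients $E_i/E_{i-1} \to E_{\ell-i+1}^\perp / E_{\ell-i}^\perp$, and by the compatibility square \eqref{giso} this isomorphism intertwines the action of $\rho^\sigma(g)$ on the source with that of $\rho^\vee(g)$ on the target. Composing with the $G$-equivariant identification $E_{\ell-i+1}^\perp / E_{\ell-i}^\perp \simeq (E_{\ell-i}/E_{\ell-i+1})^\vee$ — wait, I must be careful with indices: $\sigma$ maps $E_i \to E_{\ell-i}^\perp$ and $E_{i-1} \to E_{\ell-i+1}^\perp$, so the relevant quotient on the dual side is $E_{\ell-i}^\perp / E_{\ell-i+1}^\perp \simeq (E_{\ell-i+1}/E_{\ell-i})^\vee$. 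Thus $V$, viewed with the twisted action $\rho^\sigma$, becomes a $G$-submodule of $(E_{\ell-i+1}/E_{\ell-i})^\vee$ equipped with the contragredient of $\rho$.

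The last step is to undo the twist. The twisted module $(E_i/E_{i-1}, \rho^\sigma)$ is isomorphic to $(E_i/E_{i-1},\rho)$ precomposed with the automorphism $g \mapsto g^\sigma$ of $G$; such a twist by a group automorphism does not change the isomorphism class of an absolutely semisimple module only when that automorphism is inner, so instead I would argue more directly: because $\rho$ and $\rho^\vee$ are assumed equivalent over $K$, there is a $K[G]$-isomorphism $\phi : (E,\rho) \to (E^\vee,\rho^\vee)$, and $\phi$ (being a similarity onto $E^\vee$ — indeed one checks $E$ is then isodual with $b_\phi$ non-degenerate by a version of Lemma \ref{4dot2}) also satisfies $\phi E_j = E_{\ell-j}^\perp$ by Lemma \ref{l1}. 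Replacing $\sigma$ by $\phi$ in the argument above — or, equivalently, using that $\sigma^{-1}\phi$ is an automorphism of $E$ hence lies in $G$ and therefore conjugates $\rho^\sigma$ back to $\rho$ — we get that $E_i/E_{i-1}$ with its genuine action $\rho$ embeds $G$-equivariantly, via $\phi$, into $(E_{\ell-i+1}/E_{\ell-i})^\vee$ as a submodule; dualizing, $V^\vee$ is a quotient, hence (by semisimplicity of $K[G]$-modules, $\mathrm{char}\,K = 0$) a submodule of $E_{\ell-i+1}/E_{\ell-i}$, which is the claim.

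The main obstacle I anticipate is the twist: the isometry $\sigma$ realizes $\rho^\sigma \cong \rho^\vee$, not $\rho \cong \rho^\vee$, and these need not agree. The clean fix is to observe that $\phi^{-1}\sigma \in \GL_K(E)$ is a $G$-intertwiner of $(E,\rho^\sigma)$ with $(E,\rho)$ (read off from \eqref{giso} and the $K[G]$-linearity of $\phi$), so the twist by $g\mapsto g^\sigma$ is realized by an actual element, and every $\rho^\sigma$-submodule is a $\rho$-submodule; one must check this intertwiner respects the flag, which again follows from both $\sigma$ and $\phi$ sending $E_j$ to $E_{\ell-j}^\perp$. The rest is the bookkeeping of indices and the standard fact that over a characteristic-zero field submodules and quotient modules of a finite-group representation coincide up to isomorphism.
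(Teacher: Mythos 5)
Your first two paragraphs reproduce the paper's argument faithfully: $\sigma$ carries the GS-flag of $E$ onto the flag $\bigl(E_{\ell-j}^{\perp}\bigr)_j$ of $E^{\vee}$ (Lemma \ref{gssd} plus Lemma \ref{l1}, i.e.\ the first point of Proposition \ref{gsi}), hence induces an isomorphism $E_i/E_{i-1}\to E_{\ell-i}^{\perp}/E_{\ell-i+1}^{\perp}\simeq\left(E_{\ell-i+1}/E_{\ell-i}\right)^{\vee}$ intertwining $\rho^{\sigma}$ with $\rho^{\vee}$, and you rightly see that the remaining issue is to pass from the twisted action $\rho^{\sigma}$ back to $\rho$. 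The gap is in your proposed untwisting. You assert that a $K[G]$-isomorphism $\phi:(E,\rho)\to(E^{\vee},\rho^{\vee})$ "also satisfies $\phi E_j=E_{\ell-j}^{\perp}$ by Lemma \ref{l1}": this is unjustified. Lemma \ref{l1} only computes the GS-filtration of $E^{\vee}$; to transport the filtration of $E$ onto it one needs $\phi$ to be a \emph{similarity} of rigid adelic spaces (that is the content of Lemma \ref{gssd}), whereas $\phi$ is merely a $K$-linear $G$-equivariant map with no compatibility with the local norms. Your parenthetical appeal to "a version of Lemma \ref{4dot2}" does not repair this: that lemma produces \emph{some} $G$-invariant totally positive form, it does not make a given intertwiner norm-preserving, and on an isotypic component of multiplicity $\geq 2$ a $G$-intertwiner can be very far from an isometry, so there is no reason it should map $E_j$ onto $E_{\ell-j}^{\perp}$. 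Likewise the alternative fix "$\sigma^{-1}\phi$ is an automorphism of $E$ hence lies in $G$" is false: $\sigma^{-1}\phi$ does intertwine $\rho$ with $\rho^{\sigma}$, but it is only an element of $\GL_K(E)$, not an isometry, so it need not belong to the finite group $\Aut E$; and even as a mere intertwiner, using it on the subquotient requires precisely the flag-compatibility you could not establish. So the step on which your whole repair rests is not proved.

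For comparison, the paper does not route through any such $\phi$-preserves-the-flag claim: having obtained the $\sigma$-induced identification of $E_i/E_{i-1}$ (with the $\rho^{\sigma}$-action) with $\left(E_{\ell-i+1}/E_{\ell-i}\right)^{\vee}$ (with the $\rho^{\vee}$-action) via Proposition \ref{dq}, it concludes directly from the assumed equivalence of $\rho$, $\rho^{\vee}$ and $\rho^{\sigma}$ that $E_i/E_{i-1}$ and $\left(E_{\ell-i+1}/E_{\ell-i}\right)^{\vee}$ are $K[G]$-isomorphic, and then the submodule statement follows by semisimplicity of $K[G]$ exactly as in your last sentence. Your instinct that the twist deserves attention is sound, but the specific auxiliary claims you introduce to deal with it (that $\phi$ is a similarity mapping $E_j$ to $E_{\ell-j}^{\perp}$, and that $\sigma^{-1}\phi\in\Aut E$) are incorrect, so as written the proposal does not close the argument.
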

\begin{proof}
The isometry $\left( E_i \slash E_{i-1} \right)^{\vee} \simeq  E_{i-1}^{\perp} \slash E_i^{\perp}$ is an isomorphism of $K[G]$-modules. On the other hand, the isometry $\sigma$ maps bijectively $ E_i \slash E_{i-1}$ onto $ \sigma(E_i) \slash \sigma(E_{i-1})=E_{\ell-i}^{\perp} \slash E_{\ell-i+1}^{\perp}$ and the latter is $K[G]$-isomorphic to $\left( E_{\ell-i+1}\slash E_{\ell-i} \right)^{\vee}$. As the representations $\rho$, $\rho^{\vee}$ and $\rho^{\sigma}$ are equivalent, we can conclude that $\left( E_{\ell-i+1}\slash E_{\ell-i} \right)^{\vee}$ and $ E_i \slash E_{i-1}$ are $K[G]$-isomorphic, whence the conclusion.
\end{proof}

\begin{corollary}\label{mf}
	Let $(E,\sigma)$ be an isodual  rigid adelic space with automorphism group $G$. Assume that
\begin{enumerate}
	\item $E$ and $E^{\vee}$ are isomorphic as $K[G]$-modules.
	\item The $K[G]$-module $E$ splits as $\oplus_{i=1}^t V_i$, where the $V_i$s are pairwise non isomorphic \emph{absolutely} irreducible $K[G]$-modules.
\end{enumerate}
Then $(E,\sigma)$ is semistable. 
\end{corollary}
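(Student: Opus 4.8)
The plan is to argue by contradiction, assuming $E$ is \emph{not} semistable, so that its GS-filtration $\{0\}=E_0\subset E_1\subset\dots\subset E_\ell=E$ has length $\ell\geq 2$, and to contradict this using Proposition \ref{autoiso} together with the multiplicity-free hypothesis. Before that I would record a remark that will also be used at the end: \emph{each summand $V_i$, endowed with the rigid adelic structure induced from $E$, is semistable}. Indeed, $\rho(G)$ preserves every local norm of $E$ and fixes the $K[G]$-submodule $V_i$, hence restricts to a subgroup of $\operatorname{Aut}(V_i)$ acting irreducibly on $V_i$ by hypothesis (2); since $\operatorname{Aut}(V_i)$ stabilizes the GS-filtration of $V_i$ (the quoted theorem of \cite{MR1423622}), irreducibility forces that filtration to be trivial. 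Moreover, since the $E_i$ are $K[G]$-submodules and $K[G]$ is semisimple, $E\cong\bigoplus_{i=1}^\ell W_i$ as $K[G]$-modules, where $W_i:=E_i/E_{i-1}$; as $E$ is multiplicity-free, the $W_i$ have pairwise disjoint sets of irreducible constituents, each occurring with multiplicity one, so $W_i=\bigoplus_{k\in A_i}V_k$ for a partition $\{1,\dots,t\}=\bigsqcup_{i=1}^\ell A_i$.

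The core step is then the following. Fix $i$ and $k\in A_i$; applying Proposition \ref{autoiso} to the $K[G]$-submodule $V_k$ of $E_i/E_{i-1}$ produces a $K[G]$-submodule of $E_{\ell-i+1}/E_{\ell-i}=W_{\ell+1-i}$ isomorphic to $V_k^\vee$. Now $V_k$ is self-dual, so $V_k\cong V_k^\vee$ is a constituent of $W_{\ell+1-i}$ as well as of $W_i$. Since $A_i$ and $A_{\ell+1-i}$ are disjoint unless $i=\ell+1-i$, this forces $i=\ell+1-i$ for every index $i$ that actually occurs, which is absurd as soon as $\ell\geq 2$. Hence $\ell=1$, i.e. $E$ is semistable. (One can also phrase this globally: hypothesis (1) gives the involution $j\mapsto\pi(j)$ with $V_j^\vee\cong V_{\pi(j)}$, Proposition \ref{autoiso} gives $\pi(A_i)=A_{\ell+1-i}$, and self-duality means $\pi=\operatorname{id}$, so $A_i=A_{\ell+1-i}$ and $i=\ell+1-i$ for all $i$.)

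The step I expect to be the real obstacle is precisely establishing that each $V_i$ is self-dual. Hypothesis (1) literally says only that the multiset $\{V_1,\dots,V_t\}$ is stable under $(-)^\vee$, which a priori allows a ``dual pair'' $V_j,V_j^\vee$ of \emph{distinct} summands; in that situation Proposition \ref{autoiso} yields merely $\pi(A_i)=A_{\ell+1-i}$, compatible with $\ell\geq 2$. The point is therefore to show that, under the present hypotheses, $\pi=\operatorname{id}$. When $K$ is totally real or $CM$ this is exactly (the proof of) Lemma \ref{4dot2}: each $V_i$ carries a nonzero $G$-invariant bilinear (resp.\ Hermitian) form, so $V_i\cong V_i^\vee$; one would want to argue that this self-duality is in fact forced by (1) combined with absolute irreducibility and the isoduality of $E$, and I would attack it by analysing the $G$-invariant bilinear form $B(x,y)=\beta(x)(y)$ attached to a $K[G]$-isomorphism $\beta\colon E\to E^\vee$, together with the similarity $\sigma$, to exclude off-diagonal pairings.

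If instead one only has the weaker module isomorphism $E\cong E^\vee$, the remaining route I would try is quantitative: $\sigma$ induces, via $\sigma E_i=E_{\ell-i}^{\perp}$ (which follows from Lemma \ref{gssd}, Lemma \ref{l1} and the uniqueness of the GS-filtration), a similarity $W_i\to W_{\ell+1-i}^\vee$ of a ratio $c$ independent of $i$, so $H_r(W_i)H_r(W_{\ell+1-i})=c^{-1}$; combining this with the semistability of each $V_k$ and of the $W_i$, and with the orthogonality of the decomposition $\bigoplus_{k\in A_i}V_k$ at every Archimedean place (Schur's lemma plus the disjointness of constituents), one would try to pin down the reduced heights of all the $V_k$ and play them against the strict chain $H_r(W_1)<\dots<H_r(W_\ell)$. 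Here the delicate technical point is that the underlying $\mathcal O_K$-lattice need not split as $\bigoplus_{k\in A_i}(L\cap V_k)$ at finite places, so ``$H_r(W_i)$ equals the weighted geometric mean of the $H_r(V_k)$'' degenerates into an inequality; one checks that this inequality is oriented compatibly with the semistability of $W_i$, so that the needed equalities of reduced heights survive.
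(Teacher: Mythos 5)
Your core argument is exactly the paper's proof of Corollary \ref{mf}: assume the GS-filtration has length $\ell\geq 2$, feed an irreducible constituent of $E_1$ into Proposition \ref{autoiso}, and contradict multiplicity-freeness. Your bookkeeping with the partition $\{1,\dots,t\}=\bigsqcup_i A_i$ is just a more explicit version of the paper's one-line argument (the paper only uses $i=1$), and the two extra ingredients you add --- the semistability of each summand $V_i$, and the final ``quantitative'' paragraph on reduced heights --- are correct in spirit but play no role in the paper's proof and are not needed for the contradiction.

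The step you single out as the real obstacle, self-duality of each $V_i$, is precisely the point the paper passes over in silence: its proof says that a constituent $V$ of $E_1$ ``should also appear as a component of $E/E_{\ell-1}$'', whereas Proposition \ref{autoiso} only puts $V^{\vee}$ there, so the written argument tacitly uses $V\cong V^{\vee}$. This identification is automatic when $K$ is totally real, since an absolutely irreducible $K[G]$-module then has $K$-valued, hence totally real, character, so $\chi(g^{-1})=\overline{\chi(g)}=\chi(g)$ and $V\cong V^{\vee}$; and the remark following the corollary makes the analogous point for isodual lattices in the sense of Remark \ref{latt}, where the restriction of the invariant form to a component does the job. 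Over a general (e.g.\ CM) base field, however, hypothesis (1) as literally stated only says that duality permutes the $V_i$, and neither your argument nor the paper's excludes a nontrivial pairing $V_j\leftrightarrow V_j^{\vee}$ between distinct summands spread over $E_1$ and $E/E_{\ell-1}$. So your hesitation points to a genuine imprecision in the paper's own proof rather than to an idea you failed to find; the alternative routes you sketch (analysing the invariant pairing attached to a $K[G]$-isomorphism $E\to E^{\vee}$, or the height computation via $\sigma E_i=E_{\ell-i}^{\perp}$) are not what the paper does --- the paper simply treats the self-duality of the components as immediate.
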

\begin{proof}
	The first hypothesis implies that the conditions of Proposition \ref{autoiso} are fulfilled. Consequently, if the length $\ell$ of the filtration were $2$ or more, then any irreducible component $V$ of the destabilizing subspace $E_1$ should also appear as a component of  $E/E_{\ell-1}$, and the multiplicity of $V$ in $E$ would consequently be at least $2$.
\end{proof}
\begin{remark}
	For isodual lattices, in the sense of Remark \ref{latt}, the self-duality condition for irreducible components is automatically satisfied, as  the restriction of the bilinear (resp. sesquilinear) form $h$ to any irreducible component is a nonzero $G$-invariant bilinear form.
\end{remark}

When $E$ is multiplicity-free as a $K[G]$-module, like in the above Corollary, then the tensor multiplicativity $H_{min} (E \otimes F) = H_{min} (E)H_{min} (F)$ holds for any $F$, as was conjectured in \cite{CN3} and fully proven by Rémond \cite[Théorème 1.1]{MR4002393}. From Proposition \ref{autoiso} and its corollary, this situation can hardly occur if $E$ is isodual and unstable : if $E$ is isodual, unstable, and its irreducible components are $K[G]$-isomorphic to their duals, then at least one of those has multiplicity $2$ or more. Thus, a natural "isodual" counterpart of \cite[Théorème 1.1]{MR4002393} should be
\begin{conj}\label{bciso2}
		Let $(E,\sigma)$ be an isodual  rigid adelic space with automorphism group $G$.
		Assume that
		\begin{enumerate}
				\item $E$ and $E^{\vee}$ are isomorphic as $K[G]$-modules.
			\item The $K[G]$-module $E$ admits a decomposition $E=\oplus_{i=1}^t V_i^{a_i}$ into absolutely irreducible $G$-modules with multiplicities $a_i \leq 2$.
		\end{enumerate}
	Then, for all rigid analytic space $F$, one has $H_{min} (E \otimes F) = H_{min} (E)H_{min} (F)$.
\end{conj}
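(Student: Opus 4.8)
\textit{Strategy.} The plan is to run the reduction machinery of Corollary \ref{c1} along the (automorphism‑stable) GS‑filtration, use Rémond's multiplicity‑free theorem \cite[Théorème 1.1]{MR4002393} on the pieces that are multiplicity‑free, and exploit the symmetry of the GS‑filtration of an isodual space (Propositions \ref{autoiso} and \ref{gsi}) to control where multiplicity exactly $2$ can occur, ultimately reducing everything to a statement about rank‑$\leq 2$ multiplicity spaces.

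\textit{Step 1: reduction to $E$ semistable.} The GS‑filtration $\{0\}=E_0\subset\cdots\subset E_\ell=E$ is $G$‑stable with semistable graded pieces of strictly increasing reduced height, and $b_\sigma$ pairs $E_i/E_{i-1}$ with $\left(E_{\ell-i+1}/E_{\ell-i}\right)^\vee$. By Proposition \ref{autoiso} and the hypothesis $a_i\leq 2$, a graded piece can fail to be multiplicity‑free only if the offending component, together with its dual sitting in the symmetric piece $E_{\ell-i+1}/E_{\ell-i}$, already accounts for the full multiplicity $2$; hence such graded pieces either coincide with their symmetric partner (the middle piece, isodual and semistable by Proposition \ref{gsi}(3)) or come in dual pairs $E_i/E_{i-1}\times E_{\ell-i+1}/E_{\ell-i}$, which are isodual by Proposition \ref{gsi}(4), still have multiplicities $\leq 2$, and (for $\ell\geq 3$) are of strictly smaller dimension. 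On the multiplicity‑free graded pieces I would invoke Rémond's theorem, applied to the image of $G$ in the relevant automorphism group; on the middle piece and the dual‑paired products I would argue by induction on $\dim E+\ell$, the small cases being handled by Step 2 directly. Feeding this back into Corollary \ref{c1} (using the filtration on the $E$‑side, which is legitimate by the symmetry of $\otimes$) shows that it suffices to treat $E$ semistable. Likewise, using the GS‑filtration of $F$ and Corollary \ref{c1} once more, one reduces to $F$ semistable as well; since $H_r$ is multiplicative under tensor product and the destabilizing subspace of $E\otimes F$ is $G$‑invariant, what remains is to show that every $G$‑invariant subspace $W\subseteq E\otimes F$ satisfies $H_r(W)\geq H_r(E)H_r(F)$.

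\textit{Step 2: the semistable core via isotypic decomposition.} Decompose $E$ into $G$‑isotypic components; these are mutually orthogonal at the Archimedean places because the metrics are $G$‑invariant, so $E=\perp_i V_i\otimes M_i$ with $V_i$ absolutely irreducible and $\dim_K M_i=a_i\leq 2$, each $V_i\otimes M_i$ being semistable of reduced height $H_r(E)$. Any $G$‑invariant $W$ is a sum of pieces $V_i\otimes N_i$ with $N_i\subseteq M_i\otimes F$. Since $V_i$ is \emph{absolutely irreducible}, Rémond's theorem gives $H_{min}\!\left(V_i\otimes(M_i\otimes F)\right)=H_{min}(V_i)\,H_{min}(M_i\otimes F)$, which reduces the required lower bound, and hence the semistability of $V_i\otimes M_i\otimes F$, to the rank‑$\leq 2$ identity $H_{min}(M_i\otimes F)=H_{min}(M_i)H_{min}(F)$.

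\textit{Main obstacle.} The crux is this last point: tensor multiplicativity of a rank‑$\leq 2$ space against an arbitrary $F$. For $a_i\leq 1$ it is trivial. For $a_i=2$, the restriction of $b_\sigma$ to $V_i\otimes M_i$ factors as $\beta_i\otimes\gamma_i$ with $\gamma_i$ a non‑degenerate form on the plane $M_i$ (a perfect pairing between $M_V$ and $M_{V^\vee}$ in the dual‑paired situation), so $M_i$ is an isodual rank‑$2$ rigid adelic space; when $M_i$ is \emph{not stable}, Corollary \ref{r2} concludes, and the borderline semistable case is absorbed by the perturbation/density argument of that corollary. The residual case — $M_i$ a \emph{stable} rank‑$2$ isodual multiplicity space — is precisely the rank‑$2$ instance of Bost's Conjecture \ref{bc}, and this is where I expect the real difficulty to lie: one would hope to close it with the small‑rank results of Bost and Chen \cite{MR3035951}, but their range concerns bounded rank in \emph{both} factors whereas here $F$ is arbitrary, so an additional idea is likely needed. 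In summary, the plan reduces Conjecture \ref{bciso2} to tensor multiplicativity for rank‑$\leq 2$ rigid adelic spaces, and the stable rank‑$2$ multiplicity space is the one step that resists the present methods.
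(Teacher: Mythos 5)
The statement you are trying to prove is Conjecture \ref{bciso2}: the paper does not prove it, and in fact explicitly doubts that it is substantially easier than Conjecture \ref{bc}, precisely because (over a totally real or $CM$ field) it would imply tensor multiplicativity for a rank-$2$ space $E$ against an \emph{arbitrary} $F$ --- which is open. Your proposal, to its credit, arrives honestly at the same point: after the reduction you are left with $H_{min}(M_i\otimes F)=H_{min}(M_i)H_{min}(F)$ for a \emph{stable} rank-$2$ isodual multiplicity space $M_i$, and you acknowledge that neither Corollary \ref{r2} (which needs non-stability) nor Bost--Chen \cite{MR3035951} (which bounds the rank of both factors, or of the destabilizing subspace of the product) closes it. So this is a reduction strategy, not a proof, and the residual case is exactly the obstruction the paper identifies; the paper's own contribution in this direction is the weaker final Proposition, proved under the additional hypotheses that at most one $a_i$ equals $2$ and that $E$ is not stable, via Corollary \ref{mf}, \cite[Proposition A.3]{MR1423622}, R\'emond's theorem \cite{MR4002393} and Corollary \ref{c1} --- essentially the non-stable branch of your own scheme.

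Beyond the unavoidable open case, two steps of your reduction need more justification than you give. First, in Step 2 you treat the isotypic component $V_i\otimes M_i$ (and then $V_i\otimes M_i\otimes F$) as a \emph{tensor product of rigid adelic spaces}, so as to apply R\'emond's theorem with second factor $M_i\otimes F$. Absolute irreducibility of $V_i$ does give, at each Archimedean place, a splitting of the invariant metric as $h_{V_i,v}\otimes g_{i,v}$, but the finite part (the lattice $L\cap(V_i\otimes M_i)$) has no reason to split as a tensor product of an $\mathcal{O}_K$-structure on $V_i$ with one on $M_i$; without such a splitting the identity $H_{min}\bigl(V_i\otimes(M_i\otimes F)\bigr)=H_{min}(V_i)H_{min}(M_i\otimes F)$ is not a formal consequence of \cite{MR4002393}, and the heights of the $G$-invariant subspaces $V_i\otimes N_i$, $N_i\subseteq M_i\otimes F$, are not controlled the way they are in the multiplicity-free situation (this is exactly the failure the paper points to after Corollary \ref{mf}). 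Second, your Step 1 induction on graded pieces needs, for each piece, multiplicativity against the \emph{arbitrary} space $F$ before Corollary \ref{c1} can be fed; since the non-multiplicity-free pieces are handled only by the (incomplete) Step 2, the induction does not close. In short: the architecture (GS-filtration symmetry via Propositions \ref{gsi} and \ref{autoiso}, R\'emond on multiplicity-free pieces, Corollary \ref{r2} plus density for unstable/semistable rank-$2$ multiplicity spaces) matches what the paper can actually prove, but the conjecture itself remains open, and your write-up should present this as a conditional reduction rather than a proof.
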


Whether this conjecture is significantly easier than the original conjecture \ref{bc} is unclear. Indeed, if true, this would in particular imply that, over a totally real or $CM$-field, conjecture \ref{bc} is true whenever $E$ has dimension $2$, with no condition on $F$. Notice that the proof of \cite[Théorème 1.1]{MR4002393} relies heavily on the fact that, when $E$ admits a  multiplicity free decomposition $E=\oplus_{i=1}^t V_i$, then, any $G$-invariant subspace of $E\otimes F$ splits as $\bigoplus_{i=1}^rV_i\otimes F_i$, where the $F_i$s are subspaces of $F$ (see \cite[Proposition 2.1]{CN3}). Such a description of the $G$-invariant subspaces of $E\otimes F$ fails to  hold as soon as multiplicities occur.\smallskip

We conclude with a result in the direction of Conjecture \ref{bciso2}, under additional restrictive assumptions.

\begin{proposition}
	Let $(E,\sigma)$ be an isodual  rigid adelic space with automorphism group $G$.
	Assume that
	\begin{enumerate}
		\item $E$ and $E^{\vee}$ are isomorphic as $K[G]$-modules.
		\item The $K[G]$-module $E$ admits a decomposition $E=\oplus_{i=1}^t V_i^{a_i}$ into absolutely irreducible $G$-modules with multiplicities $a_i \leq 2$, and $a_i=2$ for at most one $i$.
	\item $E$ is not stable.
	\end{enumerate}
	Then, for all rigid analytic space $F$, one has $H_{min} (E \otimes F) = H_{min} (E)H_{min} (F)$.
\end{proposition}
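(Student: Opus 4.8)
The plan is to combine the Grayson--Stuhler filtration of $E$ with R\'emond's multiplicity-free theorem \cite[Th\'eor\`eme 1.1]{MR4002393}, Corollary \ref{c1} and Corollary \ref{r2}, using Propositions \ref{gsi} and \ref{autoiso} to constrain the shape of the filtration. First, if $E$ is multiplicity free as a $K[G]$-module, Corollary \ref{mf} shows $E$ is semistable and the cited theorem of R\'emond gives the conclusion at once. So I assume exactly one constituent, say $W:=V_{i_0}$, has multiplicity $a_{i_0}=2$, all others multiplicity $1$; since the isomorphism type of a constituent of multiplicity $2$ is forced to be self-dual by hypothesis~(1), one may write $E\simeq W^{2}\oplus U$ as $K[G]$-modules with $U$ multiplicity free and $W$ not among its constituents.

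Next I examine the GS-filtration $\{0\}=E_0\subset E_1\subset\dots\subset E_\ell=E$: the $E_j$ are $G$-submodules (Proposition before Lemma \ref{4dot2}... cf.\ \cite{MR1423622}), the graded pieces $Q_j:=E_j/E_{j-1}$ are semistable $K[G]$-modules, and $H_r(Q_1)<\dots<H_r(Q_\ell)$. By Proposition \ref{autoiso} (applicable thanks to~(1)), every constituent of $Q_j$ has its dual among the constituents of $Q_{\ell-j+1}$; as every constituent other than $W$ occurs with total multiplicity $1$ in $E$, the occurrences of $W$ are symmetric under $j\mapsto\ell-j+1$, so \emph{at most one} graded piece can fail to be multiplicity free, namely the middle one $Q_m$ with $m=(\ell+1)/2$ ($\ell$ odd), in the case $W^{2}\subseteq Q_m$. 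For every multiplicity-free graded piece $Q_j$, R\'emond's theorem gives $H_{min}(F\otimes Q_j)=H_{min}(F)H_{min}(Q_j)$. Since $H_{min}(Q_j)=H_r(Q_j)$ increases strictly with $j$, Corollary \ref{c1} applied to the GS-filtration of $E$ (with the two tensor factors exchanged) reduces the problem to proving $H_{min}(F\otimes Q_m)=H_{min}(F)H_{min}(Q_m)$ for the possibly exceptional $Q_m$ (and if no such piece occurs, we are done).

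Finally I analyse $Q_m$. It is semistable and, by Proposition \ref{gsi}(3), again (anti-)isodual of the same type, with $Q_m\simeq W^{2}\oplus U_m$ as $K[G]$-modules, $U_m$ multiplicity free. Decompose $Q_m$ into its $G$-isotypic components, mutually orthogonal for any $G$-invariant metric: $Q_m=Q_m^{(W)}\perp Q_m^{(U)}$, with $Q_m^{(U)}$ multiplicity free and the $W$-isotypic part $Q_m^{(W)}$ isometric to $W\otimes P$ for a rank-$2$ rigid adelic space $P:=\Hom_{K[G]}(W,Q_m)$ carrying an induced isodual structure. Using Lemma \ref{ds}(2) for this orthogonal splitting and R\'emond's theorem for $Q_m^{(U)}$, it remains to treat $Q_m^{(W)}=W\otimes P$. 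As $W$ is absolutely irreducible, it is multiplicity free over $\Aut(W)$, so R\'emond's theorem yields $H_{min}\bigl((W\otimes P)\otimes F\bigr)=H_{min}(W)\,H_{min}(P\otimes F)$ and $H_{min}(W\otimes P)=H_{min}(W)\,H_{min}(P)$; everything thus reduces to $H_{min}(P\otimes F)=H_{min}(P)H_{min}(F)$ for the rank-$2$ isodual space $P$. If $P$ is not stable this is exactly Corollary \ref{r2}. If $P$ is stable, one shows $P$ is then multiplicity free as a $K[\Aut(P)]$-module, so $W\otimes P$ is irreducible --- hence multiplicity free --- over $\Aut(W)\times\Aut(P)$ and a fortiori over $\Aut(W\otimes P)$, and R\'emond's theorem applies once more.

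I expect the main obstacle to be this last reduction step. Two points are delicate: (i) identifying the $W$-isotypic part of $Q_m$ with a genuine tensor product $W\otimes P$ \emph{as rigid adelic spaces}, the finite-place/lattice part requiring a saturation and degree argument (or, alternatively, a limiting argument as in the proof of Corollary \ref{r2}); and (ii) the structural claim that a \emph{stable} rank-$2$ isodual space is multiplicity free over its automorphism group, i.e.\ cannot have scalar automorphism group acting on a space of shape $\ell\oplus\ell$. It is precisely here that the two hypotheses on $E$ are used: the bound "$a_i=2$ for at most one $i$" confines any failure of multiplicity-freeness to a single graded piece of the very special form $W\otimes P$ with $\operatorname{rank}P=2$, while "$E$ not stable" (together with the analysis of $Q_m$ when $\ell=1$) prevents us from being left with a stable $E$ that is not multiplicity free, for which only the rank-$2$ case of Corollary \ref{r2} --- rather than the full conjecture --- would be available.
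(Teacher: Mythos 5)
Your reduction to the case where exactly one constituent $W=V_{i_0}$ has multiplicity $2$, and your use of R\'emond's theorem plus Corollary \ref{c1} on the multiplicity-free graded pieces, are sound and parallel to the paper. The gap lies in the case you isolate as exceptional -- both copies of $W$ inside a single middle piece $Q_m$ -- and in the case $\ell=1$ ($E$ semistable, not stable), which you route through the same analysis. Three steps there do not hold up: (a) the isotypic splitting $Q_m=Q_m^{(W)}\perp Q_m^{(U)}$ is orthogonal at Archimedean places but is \emph{not} a direct product of rigid adelic spaces, because the lattice at finite places need not split -- exactly the pitfall the paper's remark on direct sums versus direct products warns about -- so Lemma \ref{ds}(2) cannot be invoked; (b) the identification $Q_m^{(W)}\simeq W\otimes P$ as rigid adelic spaces fails in general at finite places (an $\mathcal{O}_K[G]$-lattice spanning $W\oplus W$ need not be of the form (lattice in $W$)$\,\otimes\,$(rank-$2$ lattice), e.g. $M\oplus M'$ with $M\not\simeq M'$ locally), and $W$ carries no canonical adelic structure, so ``$H_{min}(W)$'' is not defined; (c) the claim that a \emph{stable} rank-$2$ isodual space $P$ is multiplicity free over $\Aut(P)$ is false -- generically $\Aut(P)=\{\pm \id\}$ acts by scalars, so $P$ is a sum of two copies of a character and $W\otimes P$ is likewise not multiplicity free -- so in that branch you would need precisely the open rank-$2$ stable case of Conjecture \ref{bc}, which hypothesis (3) is designed to avoid.

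The paper's proof shows that this exceptional configuration simply never occurs, which is the observation your argument is missing. One first disposes of the semistable-but-not-stable case by the perturbation/continuity argument of Corollary \ref{r2} (not by analysing $E$ directly), so one may take $E$ unstable, i.e. $\ell\geq 2$. Applying Proposition \ref{autoiso} to the destabilizing subspace $E_1$, every constituent of $E_1$ reappears (up to duality, and duality fixes the unique multiplicity-$2$ constituent) in $E/E_{\ell-1}$, hence has multiplicity at least $2$ in $E$; therefore $E_1\simeq V_{i_0}\simeq E/E_{\ell-1}$ are absolutely irreducible and the two copies of $V_{i_0}$ sit at the two \emph{ends} of the filtration, never in a middle piece. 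Multiplicativity for $E_1\otimes F$ and $(E/E_{\ell-1})\otimes F$ then follows from \cite[Proposition A.3]{MR1423622} (or from \cite[Th\'eor\`eme 1.1]{MR4002393}, since an irreducible module is multiplicity free), the middle chunk $E_{\ell-1}/E_1$ is multiplicity free so R\'emond's theorem applies to it, and Corollary \ref{c1} concludes. Adding this single observation about $E_1$ (and the perturbation step) repairs your proof and makes the whole $W\otimes P$ machinery unnecessary.
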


\begin{proof}
	The proof is quite similar to that of Corollary \ref{r2}. We may assume that $E$ is unstable, since the result will continue to hold if $E$ is semistable and not stable, by the same continuity argument we used before. Under this assumption, at least one irreducible component has multiplicity greater than $1$, because of Corollary \ref{mf}, which implies that exactly one, say $V_{i_0}$, has multiplicity exactly $2$, because of the second assumption of the proposition. Each absolutely irreducible representation occurring in the decomposition of the destabilizing subspace $E_1$ of $E$ must also occur in that of $E/E_{\ell-1}$, from which we can conclude that $E_1$ is absolutely irreducible and isomorphic to $V_{i_0}$, and  $E/E_{\ell-1}$ as well. In particular, thanks to \cite[Proposition A.3]{MR1423622}, we infer that $H_{min}(E_1\otimes F) = H_{min}(E_1) H_{min}(F)$ and $H_{min}(E/E_{\ell-1}\otimes F) = H_{min}(E/E_{\ell-1}) H_{min}(F)$. Moreover, the quotient $E_{\ell-1}/E_1$ is multiplicity free, so that $H_{min}(E_{\ell-1}/E_1\otimes F) = H_{min}(E_{\ell-1}/E_1) H_{min}(F)$, thanks to \cite[Théorème 1.1]{MR4002393}. Finally, we can apply Corollary\ref{c1} and conclude that  $H_{min} (E \otimes F) = H_{min} (E)H_{min} (F)$.
\end{proof}

\providecommand{\bysame}{\leavevmode\hbox to3em{\hrulefill}\thinspace}
\providecommand{\MR}{\relax\ifhmode\unskip\space\fi MR }
\providecommand{\MRhref}[2]{%
	\href{http://www.ams.org/mathscinet-getitem?mr=#1}{#2}
}
\providecommand{\href}[2]{#2}


\begin{thebibliography}{Mum63}
	
	\bibitem[And09]{MR2571693}
	Yves Andr\'e, \emph{Slope filtrations}, Confluentes Math. \textbf{1} (2009),
	no.~1, 1--85.
	
	\bibitem[And11]{MR2872959}
	Yves Andr{\'e}, \emph{On nef and semistable {H}ermitian lattices, and their
		behaviour under tensor product}, Tohoku Math. J. (2) \textbf{63} (2011),
	no.~4, 629--649.
	
	\bibitem[BC13]{MR3035951}
	Jean-Beno{\^{\i}}t Bost and Huayi Chen, \emph{Concerning the semistability of
		tensor products in {A}rakelov geometry}, J. Math. Pures Appl. (9) \textbf{99}
	(2013), no.~4, 436--488.
	
	\bibitem[Bos96]{MR1423622}
	Jean-Beno{\^{\i}}t Bost, \emph{P\'eriodes et isogenies des vari\'et\'es
		ab\'eliennes sur les corps de nombres (d'apr\`es {D}. {M}asser et {G}.
		{W}\"ustholz)}, Ast\'erisque (1996), no.~237, S\'eminaire Bourbaki, Vol.
	1994/95.
	
	\bibitem[Bos97]{bostconj}
	\bysame, \emph{Hermitian vector bundle and stability}, lecture at the
	conference "Algebraische Zahlentheorie", Oberwolfach, 1997.
	
	\bibitem[BS94]{MR1269424}
	P.~Buser and P.~Sarnak, \emph{On the period matrix of a {R}iemann surface of
		large genus}, Invent. Math. \textbf{117} (1994), no.~1, 27--56, With an
	appendix by J. H. Conway and N. J. A. Sloane.
	
	\bibitem[CN20]{CN3}
	Renaud Coulangeon and Gabriele Nebe, \emph{Slopes of {E}uclidean lattices,
		tensor product and group actions}, Israel J. Math. \textbf{235} (2020),
	no.~1, 39--61.
	
	\bibitem[CS94]{MR1293868}
	J.~H. Conway and N.~J.~A. Sloane, \emph{On lattices equivalent to their duals},
	J. Number Theory \textbf{48} (1994), no.~3, 373--382.
	
	\bibitem[Gau21]{gaudron2018}
	\'Eric Gaudron, \emph{Minima and slopes of rigid adelic spaces}, Arakelov
	geometry and {D}iophantine applications (Emmanuel Peyre and Ga\"{e}l
	R\'{e}mond, eds.), Lecture Notes in Mathematics, vol. 2276, Springer, Cham,
	2021, pp.~37--76.
	
	\bibitem[GR13]{MR3096565}
	\'Eric Gaudron and Ga\"el R\'emond, \emph{Minima, pentes et alg\`ebre
		tensorielle}, Israel J. Math. \textbf{195} (2013), no.~2, 565--591.
	
	\bibitem[GR17]{MR3641657}
	\'{E}ric Gaudron and Ga\"{e}l R\'{e}mond, \emph{Corps de {S}iegel}, J. Reine
	Angew. Math. \textbf{726} (2017), 187--247. \MR{3641657}
	
	\bibitem[Gra84]{MR780079}
	Daniel~R. Grayson, \emph{Reduction theory using semistability}, Comment. Math.
	Helv. \textbf{59} (1984), no.~4, 600--634.
	
	\bibitem[Gra86]{MR870711}
	\bysame, \emph{Reduction theory using semistability. {II}}, Comment. Math.
	Helv. \textbf{61} (1986), no.~4, 661--676.
	
	\bibitem[HN75]{MR0364254}
	G.~Harder and M.~S. Narasimhan, \emph{On the cohomology groups of moduli spaces
		of vector bundles on curves}, Math. Ann. \textbf{212} (1974/75), 215--248.
	
	\bibitem[Mum63]{MR0175899}
	David Mumford, \emph{Projective invariants of projective structures and
		applications}, Proc. {I}nternat. {C}ongr. {M}athematicians ({S}tockholm,
	1962), Inst. Mittag-Leffler, Djursholm, 1963, pp.~526--530.
	
	\bibitem[NS65]{MR0184252}
	M.~S. Narasimhan and C.~S. Seshadri, \emph{Stable and unitary vector bundles on
		a compact {R}iemann surface}, Ann. of Math. (2) \textbf{82} (1965), 540--567.
	
	\bibitem[O'M00]{MR1754311}
	O.~Timothy O'Meara, \emph{Introduction to quadratic forms}, Classics in
	Mathematics, Springer-Verlag, Berlin, 2000, Reprint of the 1973 edition.
	
	\bibitem[R{\'{e}}m19]{MR4002393}
	Ga\"{e}l R{\'{e}}mond, \emph{Action de groupe et semi-stabilit\'{e} du produit
		tensoriel}, Confluentes Math. \textbf{11} (2019), no.~1, 53--57.
	
	\bibitem[Stu76]{MR0424707}
	Ulrich Stuhler, \emph{Eine {B}emerkung zur {R}eduktionstheorie quadratischer
		{F}ormen}, Arch. Math. (Basel) \textbf{27} (1976), no.~6, 604--610.
	
	\bibitem[Stu77]{MR0447126}
	\bysame, \emph{Zur {R}eduktionstheorie der positiven quadratischen {F}ormen.
		{II}}, Arch. Math. (Basel) \textbf{28} (1977), no.~6, 611--619.
	
\end{thebibliography}
\end{document}